\newtheorem{theorem}{\sc Theorem}[section]
\newtheorem{proposition}[theorem]{\sc Proposition}
\newtheorem{lemma}[theorem]{\sc Lemma}
\newtheorem{corollary}[theorem]{\sc Corollary}
\theoremstyle{definition}
\newtheorem{definition}[theorem]{\sc Definition}
\newtheorem{example}[theorem]{\sc Example}
\newtheorem{examples}[theorem]{\sc Examples}
\theoremstyle{remark}
\newtheorem{remark}[theorem]{\sc Remark}
\newtheorem{claim}[theorem]{}
\newenvironment{invisible}{{\noindent\sc \colorbox{yellow}{Invisible:}\;}\color{gray}}{\medskip}
\newcommand{\id}{{\sf id}}
\newcommand{\Id}{{\sf Id}}
\newcommand{\Cc}{\mathcal{C}}
\newcommand{\Dd}{\mathcal{D}}
\newcommand{\Mm}{\mathcal{M}}
\newcommand{\Nn}{\mathcal{N}}
\newcommand{\Aa}{\mathcal{A}}
\newcommand{\Bb}{\mathcal{B}}
\newcommand{\Pp}{\mathcal{P}}
\newcommand{\Rr}{\mathcal{R}}
\newcommand{\Ss}{\mathcal{S}}
\newcommand{\ot}{\otimes}
\def\Alg{{\sf Alg}}
\def\Coalg{{\sf Coalg}}
\def\Bialg{{\sf Bialg}}
\def\Hpfalg{{\sf Hopfalg}}
\def\Vec{{\sf Vec}}
\def\Rel{{\sf Rel}}
\def\hom{{\rm Hom}}
\def\ZZ{{\mathbb Z}}
\def\NN{{\mathbb N}}
\def\uoR{\underline{\overline{R}}}
\def\uoL{\underline{\overline{L}}}
\def\oR{\overline{R}}
\newcommand{\Fam}{{\sf Fam}}
\newcommand{\Maf}{{\sf Maf}}
\newcommand{\Faf}{{\sf FamRel}}
\newcommand{\ev}{{\rm ev}}
\newcommand{\coev}{{\rm coev}}
\newcommand{\unit}{\mathds{I}}
\newcommand{\initial}{\mathbf{0}}
\newcommand{\terminal}{\mathbf{1}}
\newcommand{\op}{\mathrm{op}}
\def\relto{\relbar\joinrel\mapstochar\joinrel\rightarrow}
\begin{document}
\title[Pre-rigid Monoidal Categories]{Pre-rigid Monoidal Categories}

\thanks{This article was written while the first and the third author were members of the ``National Group for Algebraic and Geometric Structures, and their Applications'' (GNSAGA-INdAM). They were both partially supported by MIUR
within the National Research Project PRIN 2017.
\\The authors would also like to thank Joost Vercruysse and Miodrag C. Iovanov for helpful discussions. 
}

\begin{abstract}
Liftable pairs of adjoint functors between braided monoidal categories in the sense of \cite{GV-OnTheDuality} provide auto-adjunctions between the associated categories of bialgebras. Motivated by finding interesting examples of such pairs, we study general pre-rigid monoidal categories. Roughly speaking, these are monoidal categories in which for every object $X$, an object $X^{\ast}$ and a nicely behaving evaluation map from $X^{\ast}\ot X$ to the unit object exist. A prototypical example is the category of vector spaces over a field, where $X^{\ast}$ is not a categorical dual if $X$ is not finite-dimensional.
	We explore the connection with related notions such as right closedness, and present meaningful examples. We also study the categorical frameworks for Turaev's Hopf group-(co)algebras in the light of pre-rigidity and closedness, filling some gaps in literature along the way. Finally, we show that braided pre-rigid monoidal categories indeed provide an appropriate setting for liftability in the sense of loc. cit. and we present an application, varying on the theme of vector spaces, showing how -in favorable cases- the notion of pre-rigidity allows to construct liftable pairs of adjoint functors when right closedness of the category is not available.

\end{abstract}

\keywords{Pre-rigid, closed, Turaev, Zunino and braided monoidal categories, liftable pairs of functors, bialgebras}
\author{Alessandro Ardizzoni}
\address{%
\parbox[b]{\linewidth}{University of Turin, Department of Mathematics ``G. Peano'', via
Carlo Alberto 10, I-10123 Torino, Italy}}
\email{alessandro.ardizzoni@unito.it}
\urladdr{\url{https://sites.google.com/site/aleardizzonihome}}
\author{Isar Goyvaerts}
\address{%
\parbox[b]{\linewidth}{Department of Mathematics, Faculty of Engineering, Vrije Universiteit Brussel, Pleinlaan 2, B-1050 Brussel, Belgium}}
\email{Isar.Goyvaerts@vub.be}
\author{Claudia Menini}
\address{%
\parbox[b]{\linewidth}{University of Ferrara, Department of Mathematics and Computer Science, Via Machiavelli
30, Ferrara, I-44121, Italy}}
\email{men@unife.it}
\urladdr{\url{https://sites.google.com/a/unife.it/claudia-menini/}}
\subjclass[2010]{Primary 18M05; Secondary 18D15; 18M15; 16W50; 16T10}
\maketitle

\tableofcontents

\section{Introduction}
Basic Linear Algebra teaches us that any vector space $V$ (over a field $k$) has a dual space, $V^{\ast}$, which is unique up to isomorphism. When $V$ is moreover finite-dimensional, we have two $k$-linear maps, $\ev_{V}:V^{\ast }{\otimes}_k V\rightarrow k$ (the evaluation at $V$) and $\mathrm{coev}_{V}:k\rightarrow V {\otimes}_k V^{\ast }$ (the coevaluation at $V$), satisfying two compatibility relations in the form of triangular identities (cf. \cite[Definition 2.10.1]{EGNO} e.g.). With respect to the tensor product (over $k$) and the ground field as unit object, this extra data turns the category of finite-dimensional $k$-vector spaces into a rigid monoidal category. When $V$ is infinite dimensional, $V^{\ast}$ and $\ev_{V}$ can still be defined, but there is no coevaluation map anymore. In other words, the monoidal category of all $k$-vector spaces is no longer rigid; in turn, it is the prototype of a so-called {\it pre-rigid} monoidal category.
\\A monoidal category $(\Cc,\otimes,\unit)$ is said to be pre-rigid if for every object $X$ there exists an object $X^{\ast }$ and a
morphism $\ev_{X}:X^{\ast }\otimes X\rightarrow \unit$ such
that the map
$$\hom _{\mathcal{C}}\left( T,X^{\ast }\right) \rightarrow \hom %
_{\mathcal{C}}\left( T\otimes X,\unit\right) :u\mapsto \ev%
_{X}\circ \left( u\otimes X\right)
$$
is bijective for every object $T$ in $\mathcal{C}$.
The notion of pre-rigidity, in its original form, stems from \cite{GV-OnTheDuality} although, as we will see, it turns out to be equivalent to the definition of \emph{weak dual} given in  \cite{DP}. A basic fact is that a (right) rigid monoidal category is right closed. It is also easily verified that right closedness implies pre-rigidity (cf. Proposition \ref{prop:prerigid2} below). Thus, what is in a sense missing in the notion of pre-rigidity compared to the notion of rigidity is the coevaluation.
\\Pre-rigidity arose in the study of so-called \textit{liftable} pairs of adjoint functors between monoidal categories. An adjoint pair of functors $(L,R)$ between monoidal categories $\Aa$ and $\Bb$ such that $R$ is a lax monoidal functor (or, equivalently, $L$ is colax monoidal) is called liftable if the induced functor $\overline{R}=\Alg(R):{\Alg}({\Aa})\rightarrow {\Alg}({\Bb})$ between the respective categories of algebra objects has a left adjoint and if the functor $\underline{L}=\Coalg(L):{\Coalg}({\Bb})\rightarrow {\Coalg}({\Aa})$ between the respective categories of coalgebra objects has a right adjoint. If $\Aa$ and $\Bb$ come both endowed with a braiding, it is shown in \cite[Theorem 2.7]{GV-OnTheDuality} that such a liftable pair of functors $(L,R)$ gives rise to an adjunction between the respective categories of bialgebra objects
$$
\xymatrix{
\Bialg(\Aa) \ar@<.5ex>[rr]^-{\uoR} && \Bialg(\Bb) \ar@<.5ex>[ll]^-{\uoL}
}
$$
provided the functor $R$ enjoys the property of being braided with respect to the braidings of $\Aa$ and $\Bb$. Using this fact, a theorem originally due to Michaelis (cf. \cite{Michaelis-ThePrimitives}) is proven for a particular class of liftable pairs of adjoint functors between symmetric monoidal categories, the main application being a version of this theorem for Turaev's Hopf group-(co)algebras (cf. \cite[Theorem 4.16]{GV-OnTheDuality}).
\\\\In loc.cit., however, the liftability condition in the motivating examples is shown to hold by rather ad-hoc methods. The present article's origin lies in finding a general setting where the liftability condition can be proved to hold. The final Section \ref{finalsection} of this paper shows that braided pre-rigid monoidal categories indeed provide an appropriate setting for liftability, but, while dealing with liftability and while having a closer look at the categories where Turaev's Hopf group-(co)algebras live, we came to study pre-rigid monoidal categories (that are not necessarily braided) \textit{an sich}, as we realised the bare notion of pre-rigidity may have its own right to exist. To the best of our knowledge such a study has not appeared elsewhere in literature.
\\Upon first sight, as hinted at above, the pre-rigidity property has some taste of right closedness of a monoidal category. We believed it was worth further investigating the relationship between these two concepts, giving rise to the work carried out in Section \ref{liftablesection}. Section \ref{Sectionexamples} dives deeper into the categories where Turaev's Hopf group-(co)algebras reside, which are variations on categories of families of objects of a given monoidal category, known as Zunino and Turaev categories. In this section, pre-rigidity and closedness of these categories are studied in detail, providing a broader picture of the motivating examples of the article \cite{GV-OnTheDuality} -where such questions where not adressed- and filling some other gaps in literature along the way. Let us now sketch in more detail the content of the present article.
\\\\In Section \ref{liftablesection}, we first recall the definition of pre-rigid category and we provide a first instance of a pre-rigid monoidal category that is not closed, see Example \ref{ex:lambek}, which is related to syntactic calculus for categorical grammars.
We then investigate the connections between pre-rigidity and with the existence of a dualizible object on the one hand, amongst other things connecting pre-rigidity to the notion of a category with weak duals (introduced in \cite{DP}), and between pre-rigidiy and closedness on the other. Here we also present two examples of not necessarily braided categories that provide an instance of a strict monoidal functor that does not preserve pre-duals.
Although Example \ref{ex:lambek} already showed that pre-rigidity and closedness are no synonyms, by the earlier-mentioned Proposition \ref{prop:prerigid2} one expects that closed monoidal categories and pre-rigid ones do share some properties, of course. For instance, it is known that a monoidal co-reflective full subcategory of a monoidal closed category is also closed. Proposition \ref{pro:prigadj} extends this result to the pre-rigid case, but it holds in a more general setting than a co-reflection since the relevant isomorphism needs not to be the unit.
Proposition \ref{pro:terminal} enables us to obtain further examples of pre-rigid monoidal categories (although with trivial pre-dual) some of which are not closed and which do not necessarily allow for a braiding (see Examples \ref{exa:terminal}).
Given a pre-rigid monoidal category $\Cc$, we conclude Section \ref{liftablesection} by considering the construction of a functor $(-)^{*}:\Cc^\op \rightarrow \Cc$ acting as the pre-dual on objects and we investigate some of its properties. In Section \ref{finalsection} we will study under which conditions this functor is part of a liftable pair of adjoint functors.
\\\\In Section \ref{Sectionexamples}, we explore different constructions of new pre-rigid monoidal categories starting from known ones, by considering the examples which play a key role in \cite[Section 4]{GV-OnTheDuality}: the ``Zunino category'' $\Fam(\Cc)$ of families of a base category $\Cc$ and the ``Turaev category'' $\Maf(\Cc)=\Fam(\mathcal{C}^{\op })^{\op }$. \\As shown in \cite[Section 2.1]{CaeDel}, the category $\Fam({\Vec})$ serves as a categorical framework for Turaev's Hopf group-algebras. Similarly, $\Maf({\Vec})$ catches Turaev's Hopf group-coalgebras. It is worth noticing that, in contrast with classical Hopf algebras, the definition of a Hopf group-coalgebra is not selfdual.
In the discussion right above, we denoted the monoidal category of vector spaces over a field $k$ as $\Vec$; when $\Vec$ is replaced by ${\Vec}^{\textrm{f}}$, the category of finite-dimensional vector spaces, objects in $\Fam({\Vec}^{\textrm{f}})$ and $\Maf({\Vec}^{\textrm{f}})$ are called ``locally finite''. Using this framework, Corollary 4.5 in \cite{GV-OnTheDuality} provides an equivalence between the categories of locally finite Hopf group-algebras and Hopf group-coalgebras, by establishing a suitable adjunction between $\Fam({\Vec}^{\textrm{f}})$ and $\Maf({\Vec}^{\textrm{f}})$. However, in \cite{GV-OnTheDuality}, the question of the pre-rigidity of these categories was not studied (let alone the question of their closedness), the liftability condition being the main concern.
\\We first recall the structures of $\Fam(\Cc)$ and $\Maf(\Cc)$. In Proposition \ref{pro:Famprerig}, we prove the pre-rigidity of the category $\Fam(\Cc)$ for $\Cc$ any pre-rigid monoidal category possessing products of pre-duals. From this, we may deduce that the Zunino category $\Fam({\Vec})$ is pre-rigid. We notice, however, that for Proposition \ref{pro:Famprerig} to hold, arbitrary products of pre-duals are needed: Remark \ref{Remark-predualsnecessary} teaches that $\Fam({\Vec}^{\textrm{f}})$ is not pre-rigid, for instance. To the best of our knowledge, it was unknown in literature under which conditions $\Fam(\Cc)$ inherits closedness from $\Cc$, except in case $\Cc$ is cartesian (which was considered in \cite{Carboni} and in \cite{AR}). Proposition \ref{pro:Famclosed} fills this gap: $\Fam(\Cc)$ is shown to be closed monoidal whenever $\Cc$ is closed monoidal and has products.
\\$\Maf(\Cc)$ is a slightly different story: $\Maf(\Cc)$ is \emph{never} pre-rigid (Proposition \ref{pro:Maf}). This implies that $\Maf({\Vec})$, the category where generic Hopf group-coalgebras reside, does not enjoy the same pre-rigidity property as $\Fam({\Vec})$, the home of Hopf group-algebras.
We are able, however, to adjust the situation a bit: we study $\Faf(\Cc)$, an interesting variant of the category $\Maf(\Cc)$ (cf. \cite{LMM}), and prove, in Proposition \ref{pro:Faf}, that it is pre-rigid monoidal whenever $\Cc$ is. Next, Proposition \ref{pro:catfun} asserts that, given a small category $\mathcal{I}$ and a complete pre-rigid monoidal category $\Cc$, the functor category $\left[ \mathcal{I},\Cc\right] $ is pre-rigid as well. Finally, we conclude this section by considering the category $\Mm^G$ of externally $G$-graded $\Mm$-objects where $G$ is a monoid and $\Mm$ is a given monoidal category. In Proposition \ref{pro:funcatG}, under mild assumptions, we prove that the category $\Mm^G$ is pre-rigid whenever $\Mm$ is. This will allow us to provide another interesting example of a pre-rigid monoidal category which is not right closed, namely the category of externally $\mathbb{N}$-graded finite-dimensional vector spaces, see Example \ref{examplerightclosed}.
\\\\In the final Section \ref{finalsection}, we propose to study liftability of adjoint pairs of functors in the light of general pre-rigid braided monoidal categories. As already mentioned above, in \cite{GV-OnTheDuality}, the liftability condition in the main examples is verified by ad-hoc methods. It is our purpose here to treat the case of generic pre-rigid braided monoidal categories in a more systematic way. We first recall what this liftability condition precisely is (Definition \ref{def:liftable}) and what this condition means for the bialgebra objects in the involved categories. Example \ref{ex:liftable} seems to be new and is considered to be of independent interest: we show that not every suitable adjunction is liftable. More precisely, setting $S=\frac{k \left[ X\right]}{\left( X^{2}\right) },$ we obtain that the induced functor $\overline{R^{f}}=\Alg(R^f):\Alg (\Vec ^{f} ) \to \Alg( \Vec ^{f}) $ of the functor $$R^{f}:\Vec ^{\textrm{f}}\rightarrow \Vec ^{\textrm{f}},\quad V\mapsto
S{\otimes}_{k} V$$
has no left adjoint (although $R^{f}$ itself does have one!).
In Proposition \ref{prop:prerigid}, we show that the pre-dual construction defines a special type of self-adjoint functor $R=(-)^*:\Cc^\op\to\Cc$. In Proposition \ref{lem:Barop}, we show that this type of functor gives rise to a liftable pair whenever the functor $\overline{R}=\Alg(R)$ it induces at the level of algebras has a left adjoint and we apply this result to the specific functor $R=(-)^*:\Cc^\op\to\Cc$ in Corollary \ref{coro:Isar}. Then, in Proposition \ref{pro:monadj}, we provide a criterion to transport the desired liftability from one category to another in presence of a suitable monoidal adjunction and we apply it, in Corollary \ref{coro:externlift}, to transfer liftability from a category $\Mm$ to the category $\Mm^\NN$ of externally $\NN$-graded objects. As a consequence, we arrive at the final Example \ref{example-prerigidnotclosed} which revisits Example \ref{examplerightclosed} and provides an instance of a situation in which Corollary \ref{coro:externlift} (properly) holds; it shows how -in favorable cases- the notion of pre-rigidity allows to construct liftable pairs of adjoint functors when right closedness of the category is not available.

\subsection{Notational conventions}\label{notations}
When $X$ is an object in a category $\Cc$, we will denote the identity morphism on $X$ by $1_X$ or $X$ for short. For categories $\Cc$ and $\Dd$, a functor $F:\Cc\to \Dd$ will be the name for a covariant functor; it will only be a contravariant one if it is explicitly mentioned. By $\id_{\Cc}$ we denote the identity functor on $\Cc$. For any functor $F:\Cc\to \Dd$, we denote $\Id_{F}$ (or sometimes -in order to lighten notation in some computations- just $F$, if the context does not allow for confusion) the natural transformation defined by $\Id_{FX}=1_{FX}$.
\\Let $\mathcal{C}$ be a category. Denote by $\mathcal{C}^{\op }$ the opposite category of $\mathcal{C}$. Using the notation of \cite[page 12]{Pareigis-CatFunct}, an object $X$ and a morphism $f:X\rightarrow Y$ in $%
\mathcal{C}$ will be denoted by $X^{\op }$ and $f^{\op }:Y^{%
\op }\rightarrow X^{\op }$ when regarded as object and
morphism in $\mathcal{C}^{\op }$. Given a functor $F:\Cc\to \Dd$, one defines its opposite functor $F^\op :\Cc^\op \to \Dd^\op $ by setting $F^\op X^\op =(FX)^\op $ and $F^\op f^\op =(Ff)^\op $. If $\alpha:F\to G$ is a natural transformation, its opposite $\alpha^\op$ is defined by $(\alpha^\op)_{X^\op}:=(\alpha_X)^\op$ for every object $X$.
\\\\Throughout the paper, we will work in the setting of monoidal categories. It is useful to recall the following notation. Let $(\Mm,\otimes ,\unit,a,l,r)$ be a monoidal category. Following \cite[0.1.4, 1.4]{SaavedraRivano}, we have that $\Mm^{\op }$ is also monoidal, the monoidal structure being given by%
\begin{gather*}
X^\op \otimes Y^\op  :=\left( X\otimes Y\right)
^\op ,\quad\text{ the unit is }\unit^{\op }, \\
a_{X^\op ,Y^\op ,Z^\op } :=\left(
a_{X,Y,Z}^{-1}\right) ^{\op },\quad l_{X^\op } :=\left( l_{X}^{-1}\right) ^{\op },\qquad
r_{X^\op }:=\left( r_{X}^{-1}\right) ^{\op }.
\end{gather*}%
If $\Mm$ is moreover braided (with braiding $c$), then so is $\Mm^{%
\op }$, the braiding being given by
\begin{equation*}
c_{X^\op ,Y^\op }:=\left( c_{X,Y}^{-1}\right) ^{%
\op }.
\end{equation*}
As already mentioned, in this note we will operate within the framework of monoidal categories, which will be assumed to be strict from now on. By Mac Lane's Coherence Theorem, this does not impose restrictions on the obtained results. We will moreover consider braided and (pre)additive monoidal categories. A basic reference for these notions is \cite{MacLane}, for instance.\medskip

Recall (see e.g. \cite[Definition 3.1]{Aguiar-Mahajan}) that a functor $F:\Aa\to\Bb$ between monoidal categories $(\Aa,\ot,{\unit}_{\Aa})$ and $(\Bb,\ot',{\unit}_{\Bb})$ is said to be a \emph{lax monoidal functor} if it comes equipped with a family of natural morphisms $\phi_{2}(X,Y):F(X)\ot' F(Y)\to F(X\ot Y)$, $X,Y\in \Aa$ and
a $\Bb$-morphism $\phi_0:{\unit}_{\Bb}\to F({\unit}_{\Aa})$, satisfying the known suitable compatibility conditions with respect to the associativity and unit constraints of $\Aa$ and $\Bb$. Moreover, $(F,\phi_0,\phi_{2})$ is called {\it strong} if $\phi_0$ is an isomorphism and $\phi_{2}(X,Y)$ is a natural isomorphism for any objects $X,Y\in\Aa$.
$(F,\phi_0,\phi_2)$ is called {\it strict} if $\phi_0$ is the identity morphism and $\phi_2$ is the identity natural transformation.
Dually, colax monoidal functors are defined.\\
Also recall that given a lax monoidal functor $(F,\phi_2,\phi_0)$, then $(F^{\op },\phi_2^{\op },\phi_0^{\op })$ is a colax monoidal functor, where we set $\phi_2^{\op }(X^{\op },Y^{\op }):=\phi_2(X,Y)^{\op }$, see e.g.  \cite[Proposition 3.7]{Aguiar-Mahajan}.\medskip

 Throughout the paper, $k$ will be a field. The category of vector spaces over $k$ will be denoted by $\Vec$  and endowed with its usual structure of monoidal category where the tensor product is $\ot_k$ and the unit object is $k$. If we take objects only to be finite-dimensional vector spaces, we will use the notation ${\Vec ^{\textrm{f}}}$.

\subsection{Some basic known facts}\label{sub:basics} We recall some basic notions and properties which are well-known and serve as a comparison for the results we will deal with in the present paper. Here $(\Cc,\otimes,\unit)$ denotes a monoidal category and every notion stated on the right admits its proper left analogue.
\begin{itemize}
  \item An object $X$ in $\Cc$ is called \emph{right dualizable} if there are an object $X^*$, called the \emph{right dual} of $X$, and morphisms $\ev_X:X^*\otimes X\to\unit $ (called the counit or the \emph{evaluation}) and $\coev_X:\unit\to X\otimes X^*$ (called the unit or the \emph{coevaluation}) in $\Cc$ that fulfill the triangle identities $(\ev_X\otimes X)(X^*\otimes\coev_X)=\id_{X^*}$ and $(X\otimes \ev_X)\circ (\coev_X\otimes X)=\id_X$. The left dual of an object $X$, if any, is denoted by $^*X$.
  \item If every object in $\Cc$ is right dualizable, the category $\Cc$ is called right \emph{rigid} (or \emph{autonomous}).
  \item $\Cc$ is said to be right \emph{closed}, if for
every object $X\in \Cc$ the functor $\left( -\right) \otimes X:%
\Cc\rightarrow \Cc$ has a right adjoint $\left[ X,-\right] :%
\Cc\rightarrow \Cc$, called the internal-hom. Note that any right rigid monoidal category is right closed with  $[X,-]:=(-)\otimes X^*$, see e.g. \cite[Theorem 1.3]{DP}. On the other hand closedness does not imply rigidity, e.g. $\Vec$ is closed but not rigid.
\item If $\Cc$ is braided then $\Cc$ is right rigid if and only if it is left rigid and the dual object is the same, see e.g. \cite[page 780]{Street12}. However there exist monoidal categories with objects $X$ such that ${}^{\ast} X\ncong X^{\ast }$ (see \cite[Example 7.19.5]{EGNO}\footnote{Note that, by \cite[Remark 2.10.3]{EGNO}, if $X$ has both a left and a right dual, then $(^{\ast }X)^\ast\cong X$. In particular, if ${}^{\ast} X\cong X^{\ast }$, then $X^{\ast\ast }\cong X$.}, e.g.).
\item If $\Cc$ is braided then $\Cc$ is right closed if and only if it is left closed as the braiding provides a functorial isomorphism $(-)\otimes X\cong X\otimes(-)$.
\item If $\Cc$ is a symmetric monoidal category which is also rigid, then $\Cc$ is called \emph{compact closed}.
\item $\Cc$ is called \emph{$*$-autonomous category}, see \cite[(4.3), page 13]{Ba-star-auto}, if it is symmetric monoidal and equipped with a fully faithful  functor $(-)^{*}:\Cc^\op \rightarrow \Cc$ such that there is a natural isomorphism $\hom _{\Cc}\left( A\otimes B,C^{\ast }\right) \cong \hom _{\Cc}\left( A,(B\otimes C)^{\ast }\right) $. Note that a $*$-autonomous category is in particular right closed monoidal with  $[X,Y]:=(X\otimes Y^*)^*$.
\item Strong monoidal functors between right rigid categories preserve duals, see e.g. \cite[Proposition 3]{Li}.
\end{itemize}

\section{Pre-rigid monoidal categories}\label{liftablesection}
In this section we recall the definition of pre-rigid category, we connect it to the notion of weak dual introduced in \cite{DP} and we investigate its connections to the notions of rigid and closed category, providing meaningful examples. Then we study pre-rigid categories with constant pre-dual. We conclude this section by considering the functor $(-)^{*}:\Cc^\op \rightarrow \Cc$ induced by the pre-dual which will play a central role in Section \ref{finalsection} in the context of liftability.

The following definition, in its original form, see \cite[4.1.3]{GV-OnTheDuality}, required the monoidal category to be braided, the motivation being to be able to consider bi and Hopf algebra objects therein. We here remove this hypothesis, since there are interesting examples of non-braided monoidal categories that fulfil the remaining conditions, see Example \ref{ex:closed} for instance.

\begin{definition}
A monoidal category $(\Cc,\otimes,\unit)$ is called
right {\it pre-rigid} if for every object $X$ there exists an object $X^{\ast }$ (a \emph{pre-dual} of $X$) and a
morphism $\ev_{X}:X^{\ast }\otimes X\rightarrow \unit$ (the \emph{evaluation at $X$}) with the following universal property: For every morphism $t:T\otimes X\rightarrow \unit$ there is a
unique morphism $t^\dag:T\rightarrow X^{\ast }$ such that $t=\ev%
_{X}\circ \left(t^\dag\otimes X\right) .$ Equivalently the map
\begin{equation}\label{maprerig}
\hom _{\Cc}\left( T,X^{\ast }\right) \rightarrow \hom %
_{\Cc}\left( T\otimes X,\unit\right) :u\mapsto \ev%
_{X}\circ \left( u\otimes X\right)
\end{equation}%
is bijective for every object $T$ in $\Cc$.

Similarly, one could define a monoidal category $(\Cc,\otimes,\unit)$ to be
left pre-rigid if for every object $X$ there exists an object ${}^{\ast} X$ and a
morphism $\ev'_{X}:X \otimes {}^{\ast} X\rightarrow \unit$ such
that the map
\begin{equation*}
\hom _{\Cc}\left( T,{}^{\ast}X\right) \rightarrow \hom %
_{\Cc}\left( X\otimes T,\unit\right) :u\mapsto \ev'%
_{X}\circ \left( X\otimes u\right)
\end{equation*}
is bijective for every object $T$ in $\Cc$.
\end{definition}

We now give a first example of left and right pre-rigid category related to categorical grammars.
\begin{example}\label{ex:lambek}
Recall that a \emph{pomonoid} is quadruple $(P,\leq,\cdot,1)$ where $(P,\leq)$ is a poset and $(P,\cdot,1)$ is a monoid such that the multiplication is monotone, i.e. $a\leq c$ and $b\leq d$ implies $a\cdot b\leq c\cdot d$ for all $a,b,c,d\in P$. A pomonoid $(P,\leq,\cdot,1)$ can be considered as a monoidal category  $\Pp$ whose objects are the elements in $P$ and where $\hom_\Pp(a,b)$ is a singleton if $a\leq b$ and is empty otherwise; the tensor product is given by $\cdot$ and the unit object is $1$. It is well-known that
\begin{itemize}
  \item $\Pp$ is left and right rigid if and only if the pomonoid $(P,\leq,\cdot,1)$ is a \emph{pregroup}, meaning that for every $t\in P$ there are elements $t^*,\,^*t\in P$, called \emph{proto-inverses}, such that $t^*\cdot t\leq 1\leq t\cdot t^*$ and $t\cdot {^* t}\leq 1\leq {^*t}\cdot t$;
  \item $\Pp$ is left and right closed if and only if the pomonoid $(P,\leq,\cdot,1)$ is a \emph{residuated pomonoid}, meaning that for every $b,c\in P$ there are elements in $P$, denoted by  $c/b$ and $a\backslash c$ and called \emph{residuals}, such that $a\cdot b\leq c\Leftrightarrow a\leq c/b\Leftrightarrow b\leq a\backslash c$ for every $a,b,c\in P$.
\end{itemize}
Instead, if we just ask $\Pp$ to be left and right pre-rigid, we get what we can name a \emph{contractive pomonoid}, meaning that, for every $t\in P$, there are $t^*,\,^*t\in P$ such that
\begin{gather}
 t^*\cdot t\leq 1 \text{ and } t\cdot {^* t}\leq 1,\text{ for every } t\in P\quad \text{(contractions)};\label{pomon1}\\
a\cdot b\leq 1\text{ implies both } a\leq b^* \text{ and } b\leq {^* a} ,\text{ for every } a,b\in P.\label{pomon2}
\end{gather}
It is easy to check that \eqref{pomon1} and   \eqref{pomon2} are equivalent to require that
\begin{gather}
a\cdot b\leq 1\Leftrightarrow a\leq b^*\Leftrightarrow b\leq {^* a}\text{  for every }a,b\in P.\label{pomon3}
\end{gather}
\begin{invisible} Assume that \eqref{pomon1} holds true. Then $a\leq b^*\Rightarrow a\cdot b\leq b^*\cdot b\leq 1\Rightarrow a\cdot b\leq 1$. Similarly $b\leq {^* a}\Rightarrow a\cdot b\leq a\cdot {^* a}\leq 1\Rightarrow a\cdot b\leq 1$. Adding \eqref{pomon2} we get \eqref{pomon3}. Conversely by assuming \eqref{pomon3} and observing that $t^*\leq t^*$ and ${^* t}\leq {^* t}$, we get \eqref{pomon1}, while \eqref{pomon2} is evident.
\end{invisible}
In particular, if $\Pp$ is left and right pre-rigid, then the pomonoid $(P,\leq,\cdot,1)$ fulfills \eqref{pomon1} i.e. it is a special instance of a \emph{protogroup}, notion introduce by Lambek in \cite{Lambek} (together with the one of pregroup) as an aid for checking which strings of words in a natural language, such as English, are well-formed sentences.
Note that the condition $a\leq b^*\Leftrightarrow b\leq {^* a}$ means that $(-)^*:P\to P$ and $^*(-):P\to P$ defines an order-reversing Galois connection. It follows from the definition that in a contractive pomonoid $t^*,\,^*t\in P$ are necessarily unique and the following properties follow easily
$$1^*=1={^*1},\quad t\leq {}^*(t^*),\quad t\leq(^*t)^*,\quad b^*\cdot a^*\leq(a\cdot b)^*\quad {}^*b\cdot {}^*a\leq {}^*(a\cdot b).$$
\begin{invisible}
In fact, if there is there is  $t^\star$ fulfilling $t^\star\cdot t\leq 1$ and $a\cdot t\leq 1\Rightarrow a\leq t^\star$, then $t^*\cdot t\leq 1$ plus $\eqref{pomon2}^\star$ imply $t^*\leq t^\star$ while $t^\star\cdot t\leq 1$ plus \eqref{pomon2} imply $t^\star\leq t^*$ and hence $t^*= t^\star$. Similarly on the left. Since $1\cdot 1\leq 1$ and $a\cdot 1\leq 1\Rightarrow a\leq 1$, we get $1^*=1$. Similarly ${^*1}=1.$
\end{invisible}
Note that
\begin{center}
  pregoups $\subseteq$ residuated monoids $\subseteq$ contractive pomonoids $\subseteq$ protogroups.
\end{center}
Explicitly, every residuated pomonoid is contractive through $t^*:=1/t$ and $^*t:=1\backslash t$. A simple example of contractive pomonoid is given by a pomonoid $(P,\leq,\cdot,1)$ where the identity $1$ is a maximum and $t^*={^*t}=1$ for every $t\in P$, as \eqref{pomon3} is trivially satisfied.  Consider $P=\{\frac{m}{10^n}\mid m,n\in\NN,m\geq10^n\}$, the set of terminating decimals greater or equal to $1$. Then $(P,\leq,\cdot,1)$, with the ordinary order and product of rational numbers, is a pomonoid with $1$ as a minimum, so that its opposite is a contractive pomonoid. We point out that this pomonoid is not residuated: otherwise it should admit the residual  $4/3=\min\{a\in P\mid 3a\geq 4\}$ but such a minimun does not exist in $P$.
\begin{invisible}
$1.34>\frac{4}{3}>1.33$; $1.334>\frac{4}{3}>1.333$; $1.3334>\frac{4}{3}>1.3333$; ...
\end{invisible}
\end{example}

From now on it suffices us to restrict our attention to right pre-rigid monoidal categories in this article, and henceforth we omit the adjective ``right''.\medskip

The lemma below provides a different characterization of pre-rigidity.
\begin{lemma}\label{lem:presheaf}
A  monoidal category $(\Cc,\otimes,\unit)$ is pre-rigid if and only if for every object $X$ there exists an object $X^{\ast }$ and an isomorphism of presheaves on $\Cc$
\begin{equation*}
\Pi:\hom _{\Cc}\left( -,X^{\ast }\right) \rightarrow \hom %
_{\Cc}\left( -\otimes X,\unit\right)
\end{equation*}
i.e. if and only if the presheaf $\hom _{\Cc}\left( -\otimes X,\unit\right):\Cc^{\op }\to\mathsf{Set}$ is representable.
\end{lemma}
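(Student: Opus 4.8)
The plan is to unwind the definition of pre-rigidity and recognize that it is precisely a representability statement in disguise. Recall that a presheaf on $\Cc$ is a functor $\Cc^{\op}\to\mathsf{Set}$, and that such a presheaf $F$ is called \emph{representable} by an object $Y$ when there is a natural isomorphism $\hom_{\Cc}(-,Y)\cong F$. So the two implications I must establish are: (i) if $\Cc$ is pre-rigid, then for each $X$ the presheaf $\hom_{\Cc}(-\otimes X,\unit)$ is representable (by $X^{\ast}$); and (ii) conversely, if this presheaf is representable for each $X$, then $\Cc$ is pre-rigid. The middle formulation involving the explicit isomorphism $\Pi$ is just the content of representability spelled out, so really there is one conceptual step here, applied in both directions.

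First I would check that the assignment $T\mapsto \hom_{\Cc}(T\otimes X,\unit)$ genuinely is a presheaf, i.e. a functor $\Cc^{\op}\to\mathsf{Set}$: on a morphism $f^{\op}\colon S^{\op}\to T^{\op}$ (that is, $f\colon T\to S$ in $\Cc$) it sends $g\colon S\otimes X\to\unit$ to $g\circ(f\otimes X)\colon T\otimes X\to\unit$, and functoriality follows from the interchange/bifunctoriality of $\otimes$ together with associativity of composition. This is routine but worth stating so the word ``presheaf'' is justified. For the forward direction (i), I would take the map in \eqref{maprerig}, namely $u\mapsto \ev_X\circ(u\otimes X)$, define $\Pi_T$ to be exactly this assignment, and observe that pre-rigidity is the statement that each $\Pi_T$ is bijective. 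The only thing left is \emph{naturality} of $\Pi=(\Pi_T)_T$ in $T$: given $f\colon T\to S$, I must verify that precomposing by $f$ in $\hom_{\Cc}(-,X^{\ast})$ and then applying $\Pi_T$ agrees with applying $\Pi_S$ and then precomposing by $f\otimes X$. Concretely, for $u\colon S\to X^{\ast}$ one has $\ev_X\circ\bigl((u\circ f)\otimes X\bigr)=\ev_X\circ(u\otimes X)\circ(f\otimes X)$, where the equality is just functoriality of $\otimes$ in the first slot; this is the naturality square and it commutes on the nose. Hence $\Pi$ is a natural isomorphism of presheaves, which is exactly representability.

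For the converse direction (ii), suppose $\hom_{\Cc}(-\otimes X,\unit)$ is represented by some object, which I name $X^{\ast}$, with natural isomorphism $\Pi\colon\hom_{\Cc}(-,X^{\ast})\to\hom_{\Cc}(-\otimes X,\unit)$. By the Yoneda lemma, such a natural transformation out of a representable functor is determined by a single element, namely the image $\ev_X:=\Pi_{X^{\ast}}(\id_{X^{\ast}})\in\hom_{\Cc}(X^{\ast}\otimes X,\unit)$, and naturality forces $\Pi_T(u)=\ev_X\circ(u\otimes X)$ for every $u\colon T\to X^{\ast}$. Indeed, applying the naturality square of $\Pi$ to the morphism $u\colon T\to X^{\ast}$ and chasing $\id_{X^{\ast}}$ through it yields precisely this formula. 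Since $\Pi_T$ is bijective by hypothesis, the map of \eqref{maprerig} is bijective for this choice of evaluation, so $\Cc$ is pre-rigid. The only mild subtlety, and the step I expect to require the most care, is extracting the evaluation via Yoneda and confirming that the abstractly-given $\Pi$ must coincide with the concrete precomposition map; once that identification is made, bijectivity transfers immediately and no genuine obstacle remains.
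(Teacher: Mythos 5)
Your proof is correct and follows essentially the same route as the paper's: the forward direction takes $\Pi_T(u)=\ev_X\circ(u\otimes X)$ and checks naturality via functoriality of $-\otimes X$, and the converse extracts $\ev_X:=\Pi_{X^{\ast}}(\id_{X^{\ast}})$ and recovers the formula for $\Pi_T$ by chasing $\id_{X^{\ast}}$ through the naturality square. No issues.
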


\begin{proof}
  If $\Cc$ is pre-rigid we can set $\Pi_T(u):=\ev%
_{X}\circ \left( u\otimes X\right)$ and this assignment is natural in $T$. Conversely, given $\Pi$ we can set $\ev%
_{X}:=\Pi_{X^{\ast }}(1_{X^{\ast}})$. Given $u:T\to X^\ast$, the naturality of $\Pi$ yields the equality $\Pi_T\circ \hom _{\Cc}\left( u,X^{\ast }\right)=\hom _{\Cc}\left( u\otimes X,\unit\right)\circ \Pi_{X^{\ast }}$ which evaluated on $1_{X^{\ast}}$ gives the equality $\Pi_T(u):=\ev%
_{X}\circ \left( u\otimes X\right)$.
\end{proof}

Note that an object $X$ whose presheaf $\hom _{\Cc}\left( -\otimes X,\unit\right):\Cc^{\op }\to\mathsf{Set}$ is representable is called \emph{semi-dualizable} in \cite[Definition 4.5]{ST}. Moreover the object $X^*$ which represents this functor is called the \emph{weak dual} of $X$ in  \cite[page 86]{DP}.
Therefore, Lemma \ref{lem:presheaf} says that a monoidal category $\Cc$ is pre-rigid if and only if every object in $\Cc$ is semi-dualizable and that the notion of pre-dual coincides with the one of weak dual. As a consequence, we have the following result.

\begin{corollary}\label{cor:uniqueness}
  In a pre-rigid monoidal category, for every object $X$, the pre-dual $X^*$ is unique up to isomorphism. Moreover $\unit^*\cong   \unit.$
\end{corollary}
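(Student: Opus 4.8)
The plan is to reduce the statement to the standard categorical fact that a representing object of a presheaf is unique up to isomorphism, exploiting the reformulation of pre-rigidity supplied by Lemma \ref{lem:presheaf}.

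First I would settle the uniqueness. Fix $X$ and write $P_X:=\hom_\Cc(-\otimes X,\unit):\Cc^\op\to\Set$ for the presheaf in question. By Lemma \ref{lem:presheaf}, a pre-dual of $X$ is exactly a representing object of $P_X$: the bijection \eqref{maprerig} says that the evaluation $\ev_X$ induces a natural isomorphism $\hom_\Cc(-,X^*)\cong P_X$. Suppose now that $Y$ and $Z$ are both pre-duals of $X$, with evaluations $\ev_X^Y$ and $\ev_X^Z$. Composing the two resulting natural isomorphisms gives a natural isomorphism $\hom_\Cc(-,Y)\cong P_X\cong\hom_\Cc(-,Z)$ of representable presheaves, so the Yoneda lemma produces a unique isomorphism $Y\cong Z$ inducing it. Concretely this isomorphism is the morphism $Y\to Z$ obtained by applying the universal property of $Z$ to $\ev_X^Y:Y\otimes X\to\unit$, with inverse obtained symmetrically; but for a clean proof it suffices to invoke Yoneda and avoid these computations.

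Next I would handle the unit object. Since the ambient category is strict, the functor $-\otimes\unit$ is literally the identity functor $\id_\Cc$, so $P_\unit=\hom_\Cc(-\otimes\unit,\unit)=\hom_\Cc(-,\unit)$ is already representable, being represented by $\unit$ itself together with $\ev_\unit:=1_\unit:\unit\otimes\unit=\unit\to\unit$. Hence $\unit$ is a pre-dual of $\unit$, and by the uniqueness just proved, $\unit^*\cong\unit$.

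I do not anticipate a genuine obstacle: everything is formal once Lemma \ref{lem:presheaf} identifies pre-duals with representing objects. The only points deserving care are the passage, via the Yoneda lemma, from a levelwise bijection of hom-sets to an actual isomorphism of objects $Y\cong Z$ in $\Cc$, and, in the unit case, the use of strictness to identify $-\otimes\unit$ with $\id_\Cc$ (in a non-strict category one would instead transport along the right unit constraint $r$, obtaining a natural isomorphism $-\otimes\unit\cong\id_\Cc$ and the same conclusion).
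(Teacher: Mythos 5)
Your proof is correct and follows essentially the same route as the paper: uniqueness via Lemma \ref{lem:presheaf} together with the Yoneda lemma, and the identification of $\unit$ as a pre-dual of itself via the (here trivial, by strictness) right unit constraint $r$, which is exactly the natural isomorphism $\hom_{\Cc}(r_{-},\unit)$ the paper uses. No issues.
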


\begin{proof}
 This is already mentioned in \cite[page 86, footnote]{DP}. We include here a proof for the reader's sake. The uniqueness follows from Lemma \ref{lem:presheaf} and Yoneda's Lemma. If $r_T:T\otimes \unit\to T$ denotes the right unit constraint, then $\hom _{\Cc}\left( r_{-},\unit\right):\hom _{\Cc}\left( -,\unit\right) \rightarrow \hom _{\Cc}\left( -\otimes \unit,\unit\right) $ is a natural isomorphism. Thus $\unit$ is a pre-dual of $\unit$.
\end{proof}

The following result provides us with a large class of examples of pre-rigid monoidal categories.
\begin{proposition}\label{prop:prerigid2}
Let $\Cc$ be a right closed monoidal category. Then $%
\Cc$ is pre-rigid.
\end{proposition}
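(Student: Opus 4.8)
The plan is to take the internal-hom into the unit as the pre-dual. Concretely, for each object $X$ I would set $X^{\ast}:=[X,\unit]$, where $[X,-]$ is the right adjoint to $(-)\otimes X$ guaranteed by right closedness. The adjunction supplies, naturally in $T$ and in $Y$, a bijection
\begin{equation*}
\Phi_{T,Y}:\hom_{\Cc}\left(T\otimes X,Y\right)\xrightarrow{\;\cong\;}\hom_{\Cc}\left(T,[X,Y]\right).
\end{equation*}
Specializing to $Y=\unit$ gives a natural isomorphism $\hom_{\Cc}\left(-\otimes X,\unit\right)\cong\hom_{\Cc}\left(-,X^{\ast}\right)$ of presheaves on $\Cc$. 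By Lemma \ref{lem:presheaf} this is exactly the condition that $\Cc$ be pre-rigid, so the abstract argument is essentially immediate.

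To match the explicit form of the definition, I would exhibit the evaluation as the counit of the adjunction. Writing $\varepsilon$ for the counit of $(-)\otimes X\dashv[X,-]$, its component at $\unit$ is a morphism $\varepsilon_{\unit}:[X,\unit]\otimes X\to\unit$, and I set $\ev_{X}:=\varepsilon_{\unit}:X^{\ast}\otimes X\to\unit$. The only point to record is the standard fact that for an adjunction $F\dashv G$ with counit $\varepsilon$, the inverse of the hom-bijection $\hom(FT,Y)\cong\hom(T,GY)$ sends $u\colon T\to GY$ to $\varepsilon_{Y}\circ F(u)$. Applied here with $F=(-)\otimes X$, $G=[X,-]$ and $Y=\unit$, this says precisely that
\begin{equation*}
\Phi_{T,\unit}^{-1}(u)=\ev_{X}\circ\left(u\otimes X\right)\qquad\text{for all }u\colon T\to X^{\ast}.
\end{equation*}
Hence the map $u\mapsto\ev_{X}\circ(u\otimes X)$ of \eqref{maprerig} is nothing but the (natural, bijective) inverse adjunction isomorphism $\Phi_{T,\unit}^{-1}$, which is the required universal property.

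There is no real obstacle here: the content is entirely in choosing $X^{\ast}=[X,\unit]$ and unwinding the closed structure, and the proof reduces to the two routine categorical facts above (representability via Lemma \ref{lem:presheaf}, and the counit description of the inverse hom-bijection). The mild point worth stating cleanly, rather than a difficulty, is the identification of $\ev_{X}$ with $\varepsilon_{\unit}$, so that the concrete map in the definition of pre-rigidity coincides with the abstract adjunction isomorphism; once this is noted, bijectivity and naturality in $T$ are inherited for free from the adjunction.
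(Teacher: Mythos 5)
Your proposal is correct and is essentially the paper's own argument: the paper likewise sets $X^{\ast}:=[X,\unit]$, takes $\ev_{X}$ to be the component at $\unit$ of the counit of $(-)\otimes X\dashv[X,-]$, and concludes via the representability characterization of pre-rigidity (the paper cites the semi-dualizability observation of Stolz--Teichner where you invoke Lemma \ref{lem:presheaf} directly). Your explicit identification of the map $u\mapsto\ev_{X}\circ(u\otimes X)$ with the inverse hom-bijection is exactly the detail the paper leaves implicit.
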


\begin{proof}As mentioned in \cite[4.2]{ST}, in a right closed monoidal category $\Cc$ every object is semi-dualizable. By the foregoing this means that $\Cc$ is pre-rigid. Explicitly, if the functor $\left( -\right) \otimes X:%
\Cc\rightarrow \Cc$ has a right adjoint $\left[ X,-\right] :%
\Cc\rightarrow \Cc$, the pre-dual is given by $X^{\ast }:=\left[ X,\unit%
\right] \in \Cc.$ Moreover, if $\epsilon :%
\left[ X,-\right] \otimes X\rightarrow \mathrm{Id}$ is the counit of the adjunction, then the evaluation is $\ev%
_{X}:=\epsilon _{\unit}.$
\begin{invisible}
As $\Cc$ is assumed to be right closed, this means that for
every object $X\in \Cc$ the functor $\left( -\right) \otimes X:%
\Cc\rightarrow \Cc$ has a right adjoint $\left[ X,-\right] :%
\Cc\rightarrow \Cc$. Set $X^{\ast }:=\left[ X,\unit%
\right] \in \Cc.$ Consider the counit of the adjunction $\epsilon :%
\left[ X,-\right] \otimes X\rightarrow \mathrm{Id}$ and set $\ev%
_{X}:=\epsilon _{\unit}.$ Clearly we have the following bijection.
\begin{equation*}
\hom _{\Cc}\left( T,X^{\ast }\right) \rightarrow \hom %
_{\Cc}\left( T\otimes X,\unit\right) :u\mapsto \ev%
_{X}\circ \left( u\otimes X\right) .
\end{equation*}%
This shows that $\Cc$ is pre-rigid.
\end{invisible}
\end{proof}

\subsection{Connections with rigid categories}We here discuss some connections with rigidity or, more generally, with the existence of a dualizable object.

\begin{remark}
We have observed that the notion of pre-dual coincides with the one of weak dual in the sense of  \cite{DP}. As a consequence, if $X$ is a dualizable object in a pre-rigid monoidal category  $\Cc$, then the pre-dual $X^*$ is exactly its dual in $\Cc$, cf. \cite[Theorem 1.3]{DP} plus the uniqueness of the pre-dual stated in Corollary \ref{cor:uniqueness}.
\end{remark}

\begin{remark}We observed in Subsection \ref{sub:basics} that a right rigid monoidal category  is right closed. On the other hand, by Proposition \ref{prop:prerigid2}, right closed implies pre-rigid. Thus, what is in a sense missing in the notion of pre-rigidity compared to the notion of rigidity is the coevaluation.
\end{remark}

\begin{remark}In Subsection \ref{sub:basics} we also observed that there is no distinction between left and right rigidity or closedness for a braided monoidal category  $(\Cc,\otimes,\unit)$. The same is true for pre-rigidity. For instance, if $\Cc$ is right pre-rigid, then it is also left pre-rigid with $^*X:=X^*$ and $\ev'_X:=\ev_X\circ c_{X,X^*}:X\otimes X^*\to\unit$, where $c$ denotes the braiding of $\Cc$.
\begin{invisible}The following bijection
\begin{equation*}
\hom _{\Cc}\left( T,X^*\right) \rightarrow\hom %
_{\Cc}\left( T\otimes X,\unit\right) \overset{\hom %
_{\Cc}\left( c_{X,T},\unit\right)}\rightarrow \hom %
_{\Cc}\left( X\otimes T,\unit\right)
\end{equation*}
maps $u:T\to X^*$ to $\ev_X\circ ( u\otimes X)\circ c_{X,T}=\ev_X\circ c_{X,X^*}\circ ( X\otimes u)=\ev'_X\circ ( X\otimes u).$
\end{invisible}
\end{remark}

We describe the pre-dual in case of a particular  example of compact closed monoidal category.

\begin{example}\label{ex:Rel}
Consider the category $\Rel $ whose objects are sets
and morphisms are binary relations, see \cite[page 26]{MacLane}. A binary relation $\Rr \subseteq
I\times J$ is then denoted by $\Rr :I\relto J.$ Given $\mathcal{S%
}:U\relto I$ the composition $\Rr \circ \mathcal{S}:U\relto
J$ is defined by setting
\begin{equation*}
\Rr \circ \mathcal{S}:=\left\{ \left( u,j\right) \in U\times J\mid
\exists i\in I\quad \left( i,j\right) \in \Rr \quad \left( u,i\right)
\in \mathcal{S}\right\} .
\end{equation*}%
By \cite[page 194]{Kelly-Laplaza}, the category $\Rel $ is compact closed where $\otimes$ is the cartesian product $\times$ and  the unit object the
singleton $\left\{ \ast \right\} .$ In particular it is rigid whence a fortiori both closed and pre-rigid.
Explicitly the internal-hom is given by $\left[ J,K\right] :=J\times K$ as
we have the obvious bijection%
\begin{equation*}
\hom _{\Rel }\left( I,\left[ J,K\right] \right) \cong \mathrm{%
Hom}_{\Rel }\left( I\times J,K\right) .
\end{equation*}%
As a consequence the pre-dual of an object $I$ is $I^*:=I$. The evaluation $\ev_{I}:I\times I\relto \left\{ \ast \right\} $ is the
binary relation $\ev_{I}:=\left\{ \left( i,i,\ast \right) \mid i\in
I\right\} .$  Given a binary relation $\Rr:I\times J\relto \left\{ \ast
\right\} \ $we define $\Rr^\dag:I\relto J$ by setting $%
\Rr^\dag:=\left\{ \left( i,j\right) \in I\times J\mid \left(
\left( i,j\right) ,\ast \right) \in \Rr\right\} $ so that $\Rr^\dag$ is the unique binary relation such that $\ev_{J}\circ
\left( \Rr^\dag\times \mathrm{Id}_{J}\right) =\Rr$.

For future use, note that, given binary relations $%
\Rr :I\relto J$ and $\Rr ^{\prime }:I^{\prime
}\relto J^{\prime }$, their cartesian product $\Rr \times
\Rr ^{\prime }:I\times I^{\prime }\relto J\times J^{\prime }$ is
defined by
\begin{equation*}
\Rr \times \Rr ^{\prime }:=\left\{ \left( i,i^{\prime
},j,j^{\prime }\right) \mid \left( i,j\right) \in \Rr \quad \left(
i^{\prime },j^{\prime }\right) \in \Rr ^{\prime }\right\} .
\end{equation*}%
\end{example}

\subsection{Connections with closed categories} We observed in Subsection \ref{sub:basics} that $\Vec$ is closed but not rigid. In particular, by Proposition \ref{prop:prerigid2}, $\Vec$ is pre-rigid but not rigid. Yet, it can be given a braided (symmetric) structure by considering the twist. We now consider two examples of not necessarily braided categories that provide an instance of a strict monoidal functor that does not preserve pre-duals. In contrast, as mentioned in Subsection \ref{sub:basics}, the strong monoidal functors between right rigid categories preserve duals.

\begin{example}\label{ex:closed}
The monoidal category of vector spaces graded by a monoid $G$ will be denoted by $\Vec_G$. Let us briefly recall its structure, see e.g. \cite[Chapter A]{NvO} and \cite[Example 2.3.6]{EGNO}.
\\If $V,W \in \Vec_{G}$, then $V\ot W:= \bigoplus_{g}(V\ot W)_g$, where $(V\ot W)_g:=\oplus_{xy=g} V_x \ot_k W_y$, becomes an object in $\Vec_G$. The unit object is $k=k_e$, $e$ being the neutral element in $G$.
For objects $V=\oplus_{g\in G}V_g, W=\oplus_{g\in G}W_g \in \Vec_{G}$, we set  $[V,W] =\oplus _{g\in G}[ V,W] _{g}$ where

\begin{equation*}
[V,W]_{g}=\left\{ f\in \mathrm{Hom} _{k}\left(
V,W\right) \mid f\left( V_{h}\right) \subseteq W_{gh}\text{ for every }h\in
G\right\}.
\end{equation*}
This  defines an endofunctor $[V,-]$ of $\mathsf{Vec} _{G}$ which is a right adjoint of the endofunctor $\left( -\right) \otimes V$.
On the other hand the right adjoint of the endofunctor $ V \otimes\left( -\right)$ is $\mathrm{HOM}\left(V,-\right)$ which is defined by setting $\mathrm{HOM}\left(
V,W\right) =\oplus _{g\in G}\mathrm{HOM}\left( V,W\right) _{g}$ where
\begin{equation*}
\mathrm{HOM}\left( V,W\right) _{g}=\left\{ f\in \mathrm{Hom} _{k}\left(
V,W\right) \mid f\left( V_{h}\right) \subseteq W_{hg}\text{ for every }h\in
G\right\},
\end{equation*}
for any $V,W\in \mathsf{Vec}_{G}$. Note that this latter inner-hom is the one usually employed in graded theory.

Since the endofunctor $\left( -\right) \otimes V$ of $\mathsf{Vec%
}_{G}$ has a right adjoint $[V,-]$, the category $\Vec_G$ is right closed.  Proposition \ref{prop:prerigid2} teaches that $\Vec_G$ is also pre-rigid and the pre-dual  $V^{\ast}$ becomes $[V,k]$ for any $G$-graded vector space $V$. Now, unless $G$ is a commutative monoid, $\Vec_G$ can not be endowed with a braiding.
Notice also that, unless $G$ is trivial, the pre-dual $V^{\ast}$ of an object $V$ in $\Vec_G$ does not coincide with the set of morphisms from $V$ to $k$ in this category. Thus the forgetful functor $U:\Vec_G\to \Vec$ between these two pre-rigid monoidal categories is strict monoidal but does not preserve pre-duals.
\end{example}

The category $\Vec_G$ is a particular instance of the following more general situation for $H=kG$, the group algebra.

\begin{example}\label{ex:comodclosed}
Let $H$ be a $k$-Hopf algebra. Then the category $^H\mathfrak{M}$ of left $H$-comodules is closed monoidal (as mentioned in \cite[page 362]{Schauenburg-HopExt}; see also \cite[Theorem 1.3.1]{Hovey}). Hence by Proposition \ref{prop:prerigid2} it is also pre-rigid. Note that the category itself needs not to be braided unless $H$ is coquasi-triangular (dual of \cite[Proposition XIII.1.4.]{Kassel}). From a communication with G. B\"{o}hm, it emerged that the existence of the antipode in Hovey's proof is superfluous so that the category of left $H$-comodules is closed even if $H$ is just a bialgebra. \\As in the particular case of $\Vec_G$, here also the forgetful functor $U:\!^H\mathfrak{M}\to \Vec$ between these two pre-rigid monoidal categories is strict monoidal but does not preserve pre-duals.
\end{example}

The very definition of a pre-rigid monoidal category together with Proposition \ref{prop:prerigid2} suggests a close connection between the notions of closedness and pre-rigidity. However, these notions are no synonyms. We already encountered an example of a pre-rigid monoidal category which is not closed namely the category associated to a contractive pomonoid, cf. Example \ref{ex:lambek}. We come back to other examples soon in Examples \ref{exa:terminal} and \ref{examplerightclosed}.
\\But closed monoidal categories and pre-rigid ones share quite some properties, of course.
For instance, it is known that a monoidal co-reflective full subcategory of a monoidal closed category is also closed (although the closed structure may not be preserved by the inclusion), see e.g. \cite[Lemma 4.1]{Hasegawa}. The following proposition extends this result to the pre-rigid case but it holds in a more general setting than a co-reflection since the relevant isomorphism needs not to be the unit.

\begin{proposition}
\label{pro:prigadj}Let $\Aa$ and $\Bb$ be monoidal
categories and let $\left( L,R\right) $ be an adjunction such that $L:%
\Bb\rightarrow \Aa$ is strong monoidal. If $\Aa$ is
pre-rigid then the presheaf $\hom _{\Bb}\left( -\otimes B,RL\left(
\unit_{\Bb}\right) \right)$ is representable. If furthermore $\unit_{\Bb}\cong RL\left(
\unit_{\Bb}\right) $ (e.g. $L$ is fully faithful), then $\Bb$ is
pre-rigid too.
\end{proposition}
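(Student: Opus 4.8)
The plan is to produce, for each fixed $B\in\Bb$, a chain of natural isomorphisms in the variable $T\in\Bb$ ending in a representable presheaf, and then invoke Lemma \ref{lem:presheaf}. Concretely, I would start from $\hom_\Bb(T\otimes B, RL(\unit_\Bb))$ and apply the adjunction isomorphism $\hom_\Bb(-,R-)\cong\hom_\Aa(L-,-)$ to obtain $\hom_\Aa(L(T\otimes B), L(\unit_\Bb))$. Since $L$ is strong monoidal, the structure isomorphism $\phi_2(T,B):LT\otimes LB\to L(T\otimes B)$ identifies the source while $\phi_0:\unit_\Aa\to L(\unit_\Bb)$ identifies the target with $\unit_\Aa$, landing us at $\hom_\Aa(LT\otimes LB,\unit_\Aa)$.

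At this point I would use the hypothesis that $\Aa$ is pre-rigid: by Lemma \ref{lem:presheaf} the presheaf $\hom_\Aa(-\otimes LB,\unit_\Aa)$ is represented by $(LB)^*$, so that $\hom_\Aa(LT\otimes LB,\unit_\Aa)\cong\hom_\Aa(LT,(LB)^*)$, naturally in $LT$ and hence in $T$. Applying the adjunction isomorphism once more gives $\hom_\Aa(LT,(LB)^*)\cong\hom_\Bb(T,R((LB)^*))$. Composing the whole chain yields a natural isomorphism of presheaves $\hom_\Bb(-\otimes B, RL(\unit_\Bb))\cong\hom_\Bb(-, R((LB)^*))$, which is exactly the representability asserted in the first part, with representing object $R((LB)^*)$.

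For the second assertion I would observe that the unit $\eta_{\unit_\Bb}:\unit_\Bb\to RL(\unit_\Bb)$ of the adjunction provides the comparison morphism. Under the hypothesis $\unit_\Bb\cong RL(\unit_\Bb)$, post-composition with this isomorphism gives a natural isomorphism $\hom_\Bb(-\otimes B,\unit_\Bb)\cong\hom_\Bb(-\otimes B, RL(\unit_\Bb))$, so the former presheaf is representable for every $B$; by Lemma \ref{lem:presheaf} this says precisely that $\Bb$ is pre-rigid, with pre-dual $B^*:=R((LB)^*)$ and evaluation obtained by transporting $\ev_{LB}$ back along the isomorphisms. The parenthetical case is then the standard fact that $L$ is fully faithful if and only if the unit $\eta$ is a natural isomorphism, which in particular makes $\eta_{\unit_\Bb}$ invertible.

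The argument is entirely formal, so I do not expect a genuine obstacle; the only point that demands care is checking that every isomorphism in the chain is natural in $T$. The two adjunction isomorphisms and the strong-monoidal constraint $\phi_2(-,B)$ are natural by hypothesis, while the pre-rigidity isomorphism for $\Aa$ is natural in its argument and is merely precomposed with the functor $L$, which preserves naturality. Keeping track of these naturalities, and confirming that $\phi_0$ and $\eta_{\unit_\Bb}$ enter only as fixed isomorphisms that do not disturb naturality in $T$, is the whole substance of the proof.
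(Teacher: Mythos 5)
Your proposal is correct and follows essentially the same route as the paper's proof: the same chain of isomorphisms (adjunction, strong monoidality of $L$, pre-rigidity of $\Aa$, adjunction again) identifying $\hom_\Bb(-\otimes B, RL(\unit_\Bb))$ with $\hom_\Bb(-,R((LB)^*))$, followed by the same transfer along $\unit_\Bb\cong RL(\unit_\Bb)$ and the same appeal to fully faithfulness of $L$ implying the unit is an isomorphism. The only (cosmetic) difference is that you run the chain in the opposite direction and handle the unit-object identification via $\phi_0$ inside $\Aa$ rather than via $R\psi_0$ in $\Bb$.
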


\begin{proof}
Denote by $\left( L,\psi _{2},\psi _{0}\right) $ the strong monoidal
structure of $L$. For every $B\in \Bb$ we set $B^{\ast }:=R\left( \left( LB\right)
^{\ast }\right) $. Then, for every $T\in \Bb$, we have the following
chain of isomorphisms%
\begin{align*}
\hom _{%
\Bb}\left( T,R\left( \left( LB\right) ^{\ast }\right) \right) &\cong
\hom _{\Aa}\left( LT,\left( LB\right) ^{\ast }\right) \cong
\hom _{\Aa}\left( LT\otimes LB,\unit_{\Aa%
}\right)  \cong\hom _{\Aa}\left( L\left( T\otimes B\right) ,\unit_{\Aa}\right)\\
&\cong  \hom _{\Bb}\left( T\otimes
B,R\left( \unit_{\Aa}\right) \right)\cong  \hom _{\Bb}\left( T\otimes
B,RL\left(
\unit_{\Bb}\right) \right)
\end{align*}
where the last isomorphism is induced by the isomorphism $R\psi _{0}: RL\left(
\unit_{\Bb}\right)\to R\left(\unit_{\Aa}\right)$.
The above displayed composition is natural in $T$ so that we get a natural isomorphism $\hom _{%
\Bb}\left( -,R\left( \left( LB\right) ^{\ast }\right) \right)\cong \hom _{\Bb}\left( -\otimes B,RL\left(
\unit_{\Bb}\right) \right)$ which says that the presheaf $\hom _{\Bb}\left( -\otimes B,RL\left(
\unit_{\Bb}\right) \right)$ is representable. Now $\unit_{\Bb}\cong RL\left(
\unit_{\Bb}\right) $ induces an isomorphism  $\hom _{\Bb}\left( -\otimes B,RL\left(
\unit_{\Bb}\right) \right)\cong \hom _{\Bb}\left( -\otimes B,\unit_{\Bb} \right)$ so that the latter presheaf is representable as well. By Lemma \ref{lem:presheaf} we conclude.
Note that if $L$ is fully faithful then the unit $\eta:\id_{\Bb}\to RL$ is an isomorphism by the dual of \cite[Proposition 3.4.1]{Borceux1}. In particular $\eta \unit_{\Bb}:\unit_{\Bb}\to RL\left(
\unit_{\Bb}\right)$ is an isomorphism.
\end{proof}

Now follows a variant of Proposition \ref{pro:prigadj} that holds in case $\Bb$ is Cauchy complete i.e. when every idempotent in $\Bb$ splits. Recall that an object $A$ in a category is called a \emph{retract} of an object $B$ if there are morphisms $i:A\to B$ and $p:B\to A$ such that $p\circ i= 1_A$.
A functor $F: \Cc \rightarrow \Dd$ is called \emph{separable} if the obvious natural transformation  $\mathcal{F} : \hom _{\Cc}(-,-)\rightarrow \hom _{\Dd}(F(-), F(-))$ is a split natural monomorphism.

\begin{proposition}
\label{pro:prigadj2}Let $\Aa$ and $\Bb$ be monoidal
categories and let $\left( L,R\right) $ be an adjunction such that $L:%
\Bb\rightarrow \Aa$ is strong monoidal. Assume that $\Bb$ is Cauchy complete and that $\unit_{\Bb}$ is a retract of $RL\left(
\unit_{\Bb}\right) $ (e.g. when $L$ is a separable functor). Then, if $\Aa$ is
pre-rigid, so is $\Bb$.
\end{proposition}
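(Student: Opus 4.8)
The plan is to bootstrap from Proposition \ref{pro:prigadj}, whose hypotheses are exactly those of the present statement, and to upgrade its conclusion ``$\unit_\Bb \cong RL(\unit_\Bb)$'' to the weaker ``$\unit_\Bb$ is a retract of $RL(\unit_\Bb)$'' by splitting an idempotent. First I would record what Proposition \ref{pro:prigadj} already provides: for every $B\in\Bb$ the presheaf $\hom_\Bb(-\otimes B, RL(\unit_\Bb))$ is representable, and in fact its proof exhibits a natural isomorphism $\hom_\Bb(-\otimes B, RL(\unit_\Bb))\cong\hom_\Bb(-,R((LB)^*))$. Write $i:\unit_\Bb\to RL(\unit_\Bb)$ and $p:RL(\unit_\Bb)\to\unit_\Bb$ for the section and retraction witnessing the retract, so that $p\circ i=1_{\unit_\Bb}$ and $e:=i\circ p$ is an idempotent on $RL(\unit_\Bb)$ whose splitting object is $\unit_\Bb$.

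Next I would apply the covariant functor $\hom_\Bb(-\otimes B,-)$ to the pair $(i,p)$. Since hom-functors preserve retracts, this realizes $\hom_\Bb(-\otimes B,\unit_\Bb)$ as a retract of $\hom_\Bb(-\otimes B, RL(\unit_\Bb))$; concretely, $\hom_\Bb(-\otimes B,\unit_\Bb)$ is a splitting of the idempotent natural transformation $\hom_\Bb(-\otimes B, e)$. Transporting along the isomorphism of the previous paragraph, $\hom_\Bb(-\otimes B,\unit_\Bb)$ becomes a splitting of an idempotent natural endomorphism $\alpha$ of the representable presheaf $\hom_\Bb(-,R((LB)^*))$. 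By the Yoneda Lemma, $\alpha=\hom_\Bb(-,\bar e)$ for a unique idempotent $\bar e:R((LB)^*)\to R((LB)^*)$ in $\Bb$.

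This is where Cauchy completeness enters: the idempotent $\bar e$ splits in $\Bb$, say through an object $B^*$ with $\bar i:B^*\to R((LB)^*)$ and $\bar p:R((LB)^*)\to B^*$ satisfying $\bar p\bar i=1_{B^*}$ and $\bar i\bar p=\bar e$. Applying Yoneda once more, $\hom_\Bb(-,B^*)$ is then a splitting of $\hom_\Bb(-,\bar e)=\alpha$. Since idempotent splittings are unique up to isomorphism, I would conclude $\hom_\Bb(-\otimes B,\unit_\Bb)\cong\hom_\Bb(-,B^*)$, so that $\hom_\Bb(-\otimes B,\unit_\Bb)$ is representable with representing object $B^*$; Lemma \ref{lem:presheaf} then yields that $\Bb$ is pre-rigid, the pre-dual of $B$ being this $B^*$. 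For the parenthetical remark I would note that separability of $L$ means, by Rafael's theorem, that the unit $\eta:\id_\Bb\to RL$ admits a natural retraction, and evaluating at $\unit_\Bb$ exhibits $\unit_\Bb$ as a retract of $RL(\unit_\Bb)$.

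The main obstacle -- indeed the only non-formal step -- is the passage through the Yoneda Lemma that turns the idempotent on presheaves into an honest idempotent in $\Bb$ and then splits it: one must check that the object produced by Cauchy completeness represents precisely $\hom_\Bb(-\otimes B,\unit_\Bb)$, which rests on the uniqueness of idempotent splittings and on hom-functors preserving them. Everything else is bookkeeping with the isomorphisms already supplied by Proposition \ref{pro:prigadj}.
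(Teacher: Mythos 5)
Your argument is correct and follows the same route as the paper: exhibit $\hom_{\Bb}(-\otimes B,\unit_{\Bb})$ as a retract of the representable presheaf $\hom_{\Bb}(-\otimes B,RL(\unit_{\Bb}))$ supplied by Proposition \ref{pro:prigadj}, use Cauchy completeness of $\Bb$ to conclude that it is itself representable, and finish with Lemma \ref{lem:presheaf}, handling the separability remark via Rafael's theorem exactly as the paper does. The only difference is that you prove by hand (via Yoneda and the uniqueness of idempotent splittings) the fact that a retract of a representable presheaf on a Cauchy complete category is representable, which the paper simply cites as \cite[Lemma 6.5.6]{Borceux1}.
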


\begin{proof}
By definition of retract, there are morphisms $p:RL\left(
\unit_{\Bb}\right)\to \unit_{\Bb}$ and $i:\unit_{\Bb}\to RL\left(
\unit_{\Bb}\right)$ such that $p\circ i=1_{\unit}$. Then $\hom _{\Bb}\left( T\otimes B,p\right)\circ \hom _{\Bb}\left( T\otimes B,i\right)$ is trivial and hence $\hom _{\Bb}\left( -\otimes B,\unit_{\Bb} \right)$ is a retract of the presheaf $\hom _{\Bb}\left( -\otimes B,RL\left(
\unit_{\Bb}\right) \right)$, which is representable by Proposition \ref{pro:prigadj}. Thus, by \cite[Lemma 6.5.6]{Borceux1} (which can be applied since $\Bb$ is Cauchy complete), we get that $\hom _{\Bb}\left( -\otimes B,\unit_{\Bb} \right)$ is representable as well.
By Lemma \ref{lem:presheaf}, $\Bb$ is pre-rigid.
\\Notice that if $L$ is a separable functor then the unit $\eta:\id\to RL$ is a split natural monomorphism by Rafael's Theorem. In particular $i:=\eta \unit_{\Bb}:\unit_{\Bb}\to RL\left(
\unit_{\Bb}\right)$ is a split monomorphism.
\end{proof}

\begin{example}
Recall that a \emph{monoidal comonad} on a monoidal category $(\Mm,\otimes,\unit)$ consists of a comonad $(\bot,\delta,\epsilon)$ on $\Mm$ such that $\bot:\Mm\to\Mm$ is lax monoidal and $\delta:\bot\to\bot\bot$ and $\epsilon:\bot\to\unit$ are monoidal natural transformations, see e.g. \cite{Pastro-Street}.

Let $\bot$ be a comonad on a monoidal category $\Mm$. By the dual version of \cite[Corollary 3.13]{McCrudden} (see also \cite[Theorem 3.19]{Bohm}) there is a bijective correspondence between monoidal structures on the Eilenberg-Moore category of $\bot$-comodules $\Mm^\bot$ such that the forgetful functor $L:\Mm^\bot\to \Mm$ is strict monoidal and monoidal comonad structures on the functor $\bot$. Thus if $\bot$ is a monoidal comonad and also a coseparable comonad, i.e. when $L$ is a separable functor (see \cite[Theorem 1.6]{EV}), then we can apply Proposition \ref{pro:prigadj2} to conclude that $\Mm^\bot$ is pre-rigid in case $\Mm$ is pre-rigid and $\Mm^\bot$ is Cauchy complete (note that $L$ has a right adjoint given by the free functor).

Consider a $k$-bialgebra $B$ whose underlying coalgebra is coseparable. Let $\mathfrak{M}^B$ be the category of right $B$-comodules. Since $k$ is a field, $\bot:=-\otimes_k B:\mathfrak{M}\to \mathfrak{M}$ is left exact and hence the forgetful functor $U:\mathfrak{M}^B\to \mathfrak{M}$ creates equalizers. Since $U$ has a right adjoint given by the functor $-\otimes_k B$, we can apply Beck's Theorem (see \cite{MacLane}), to get that the comparison functor $K:\mathfrak{M}^B\to \mathfrak{M}^\bot$ is a category isomorphism. Since $B$ is coseparable, the functor $U=L\circ K$ is separable and hence also $L:\mathfrak{M}^\bot\to \mathfrak{M}$ is separable. Since $B$ is a bialgebra, $\mathfrak{M}^B$ is monoidal and $U$ is  strict monoidal. As a consequence $\mathfrak{M}^\bot$ is monoidal and $L$ is  strict monoidal. Hence, by the foregoing, $\mathfrak{M}^\bot$ is pre-rigid. Unfortunately for our theory developed so far, this can also be deduced from the stronger fact that $\mathfrak{M}^B$ is closed which holds even if $B$ is not coseparable, see Example \ref{ex:comodclosed}.
\end{example}

\subsection{Examples with constant pre-dual}
The next result enables us to obtain further examples of pre-rigid monoidal categories (although with trivial pre-dual) some of which are not closed and which do not necessarily allow for a braiding. It also gives a criterium to exclude right closedness.
\begin{proposition}\label{pro:terminal}
  Let $(\Cc,\otimes,\unit)$ be a monoidal category.
  \begin{enumerate}
    \item[$1)$] If the unit object $\unit$ is terminal in $\Cc$, then $\Cc$ is pre-rigid where $X^*:=\unit$ for every $X$ in $\Cc$.
    \item[$2)$] If the unit object $\unit$ is initial in $\Cc$ and $\Cc$ is pre-rigid, then $\unit$ is terminal in $\Cc$.
    \item[$3)$] If the unit object $\unit$ is initial in $\Cc$ and the skeleton of $\Cc$ is not the trivial category, then   $\Cc$ is not right closed.
  \end{enumerate}
\end{proposition}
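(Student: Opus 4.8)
The plan is to treat the three items in turn, using throughout that strictness gives $\unit\otimes X=X$.

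For item $1)$, I would exploit that terminality of $\unit$ collapses both sides of the pre-rigidity bijection to singletons. Taking $X^*:=\unit$, I set $\ev_X\colon\unit\otimes X=X\to\unit$ to be the unique morphism into the terminal object. For every $T$ the candidate map $\hom_\Cc(T,\unit)\to\hom_\Cc(T\otimes X,\unit)$, $u\mapsto\ev_X\circ(u\otimes X)$, is then a function between two one-element sets, since both $\hom_\Cc(T,\unit)$ and $\hom_\Cc(T\otimes X,\unit)$ are singletons by terminality of $\unit$; hence it is automatically bijective, establishing pre-rigidity with constant pre-dual $\unit$.

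For item $2)$, the key move is to specialize the pre-rigidity isomorphism at $T=\unit$. For an arbitrary object $X$, pre-rigidity furnishes $X^*$ together with a bijection $\hom_\Cc(\unit,X^*)\cong\hom_\Cc(\unit\otimes X,\unit)=\hom_\Cc(X,\unit)$. Because $\unit$ is initial, the left-hand set is a singleton, so the bijection forces $\hom_\Cc(X,\unit)$ to be a singleton for every $X$; this is precisely the assertion that $\unit$ is terminal.

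For item $3)$, I would argue by contradiction, assuming $\Cc$ right closed. Then each functor $(-)\otimes X$ has a right adjoint, so it is a left adjoint and preserves colimits, in particular the initial object. As $\unit$ is initial, $\unit\otimes X\cong\unit$, and since $\unit\otimes X=X$ this yields $X\cong\unit$ for every object $X$. Every object is thus isomorphic to $\unit$, and as initiality makes $\hom_\Cc(\unit,\unit)$ a singleton, the skeleton of $\Cc$ consists of one object carrying only its identity morphism, namely the trivial category, contradicting the hypothesis. (One could equally route through item $2)$ and Proposition \ref{prop:prerigid2} to first observe that $\unit$ is a zero object.) The individual steps are short; the only substantive observations are the specialization $T=\unit$ in item $2)$ and, in item $3)$, the recognition that right closedness makes tensoring a left adjoint, hence preserving the initial object. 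I anticipate no serious obstacle.
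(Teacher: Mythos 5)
Your proposal is correct and follows essentially the same route as the paper: item $1)$ and item $2)$ are verbatim the paper's arguments, and item $3)$ reaches the same conclusion (every object is initial, hence isomorphic to $\unit$, so the skeleton is trivial), merely phrased via ``left adjoints preserve initial objects'' where the paper instead writes out the adjunction bijection $\hom_{\Cc}(X,Y)\cong\hom_{\Cc}(\unit,[X,Y])$ directly. No gaps.
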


\begin{proof}
  $1)$. If $\unit$ is terminal, for every object $X$ there is a unique morphism $t_X:X\to \unit$ such that $\hom _{\Cc}\left(X, \unit\right)=\{t_X\} $. Set $X^*:=\unit$ and $\ev_{X}:=t_{X^*\otimes X}$. The map  \eqref{maprerig} is trivially bijective for every object $T$ in $\Cc$ and hence $\Cc$ is pre-rigid.

  $2)$. For every object $X$ in $\Cc$ we have $\hom _{\Cc}\left(X, \unit\right)\cong\hom _{\Cc}\left(\unit\otimes X, \unit\right)\cong\hom _{\Cc}\left(\unit, X^*\right)$ and the latter is a singleton if $\unit$ is an initial object. Thus $\unit$ is also terminal in this case.

  $3)$. Assume that $\unit$ is an initial object and that $\Cc$ is  right closed. Then, for every object $X,Y$ in $\Cc$, the functor $(-)\otimes X$ has a right adjoint $[X,-]$ and hence we have $\hom _{\Cc}\left(X, Y\right)\cong \hom _{\Cc}\left(\unit\otimes X, Y\right)\cong \hom _{\Cc}\left(\unit, [X,Y]\right)$ and the latter is a singleton. Then $X$ is an initial object as well, for every $X$. So $X\cong \unit$ and the skeleton of $\Cc$ is the trivial category.
\end{proof}

\begin{remark}
A monoidal category $\Cc$ whose unit object $\unit$ is a terminal object is called \emph{semicartesian} in literature.
  Rephrasing the first item from the above proposition, we have that being semicartesian implies pre-rigidity for a monoidal category. The converse of this statement does not hold; consider $\Vec$ for instance.
\end{remark}

\begin{example}
As observed in \cite[page 15]{Kelly}, the symmetric cartesian monoidal category ${\sf Top}$ is not closed. Note that the unit object of this monoidal category is the singleton and it is also a terminal object. Therefore, by  Proposition \ref{pro:terminal}, ${\sf Top}$ is pre-rigid.
\end{example}

\begin{example}\label{ex:slice}
  Let $(\Cc,\otimes,\unit)$ be a monoidal category. Then the slice category $\Cc/\unit$ becomes monoidal with unit object $(\unit,\id_\unit)$, see \cite[page 3]{BJT}, which is also terminal. By Proposition \ref{pro:terminal},  $\Cc/\unit$ is pre-rigid.
\end{example}

\begin{examples}\label{exa:terminal}
We collect here further examples where Proposition \ref{pro:terminal} applies. Here $(\Cc,\otimes,\unit)$ denotes a braided monoidal category.
  \begin{enumerate}[1.]
    \item Since $\Cc$ is braided, the category $\Coalg(\Cc)$ of coalgebra objects in $\Cc$ is monoidal. It has unit object $\unit$ which is also a terminal object in $\Coalg(\Cc)$ via the counit. Hence this category is pre-rigid. Note that there are conditions on $\Cc$ ensuring that $\Coalg(\Cc)$ is closed, see e.g. \cite[3.2]{Po}.
      \item The category $\Alg^+(\Cc)$ of augmented algebra objects in $\Cc$ is monoidal too with unit object $\unit$ which is also a zero object (both terminal and initial) in $\Alg^+(\Cc)$. Hence this category is pre-rigid but not right closed. The fact that $\Alg^+(\Cc)$ is pre-rigid can also be deduced from Example \ref{ex:slice} and the identification $\Alg^+(\Cc)\cong \Alg(\Cc)/\unit$.
    \item If we further assume that $\Cc$ is symmetric monoidal, then the category $\Bialg(\Cc)$ of bialgebra objects in $\Cc$ is monoidal too (see \cite[1.2.7]{Aguiar-Mahajan}) with unit object $\unit$ which is also a zero object  in $\Bialg(\Cc)$. Hence this category is pre-rigid but not right closed, as its skeleton is not trivial, in general. The same argument applies to $\Hpfalg(\Cc)$.
    \item Assume that $\Cc$ has a terminal object $\terminal$ such that $\terminal\ncong \unit$ (e.g. in $\Vec$ one has $\{0\}\ncong k$). The unique morphisms $m:\terminal\otimes \terminal\to \terminal$ and $u:\unit\to \terminal$ turn $(\terminal,m,u)$ into an object in $\Alg(\Cc)$: indeed, $m\circ(m\otimes \terminal)$ and $m\circ(\terminal\otimes m)$ coincide as they both have $\terminal$ as target. Analogously one checks that $m$ is unitary. By a similar argument one shows that this algebra is a terminal object in $\Alg(\Cc)$. On the other hand $\unit$ is initial in $\Alg(\Cc)$ via the unit. We deduce that $\Alg(\Cc)$ is not pre-rigid by negation of 2) in Proposition \ref{pro:terminal}.
  \end{enumerate}
\end{examples}

\subsection{The functor induced by the pre-dual} Here, given a pre-rigid monoidal category $\Cc$ we consider the construction of a functor $(-)^{*}:\Cc^\op \rightarrow \Cc$ acting as the pre-dual on objects and investigate some of its properties. In Section \ref{finalsection} we will study under which conditions this functor is part of a liftable pair of adjoint functors.\\

The following fact is implicitly understood in \cite{GV-OnTheDuality}. We write it for future reference.

\begin{lemma}\label{lem:contravariant}
Let $\Cc$ be a pre-rigid monoidal category, the assignment $X\mapsto X^*$ induces a functor $(-)^{*}:\Cc^\op \rightarrow \Cc$ such that \eqref{maprerig} is natural in $T$ and $X$.
\end{lemma}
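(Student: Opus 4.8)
The plan is to equip the object assignment $X\mapsto X^{\ast}$ with an action on morphisms that is dictated by the universal property, and then to read off both functoriality and naturality from the uniqueness clause of pre-rigidity, which is essentially the only tool needed. Given $g\colon X\to Y$ in $\Cc$, the composite $\ev_Y\circ(Y^{\ast}\otimes g)\colon Y^{\ast}\otimes X\to\unit$ is a morphism out of $Y^{\ast}\otimes X$, so applying the universal property of the pre-dual $X^{\ast}$ to the object $T=Y^{\ast}$ produces a unique morphism
\[
g^{\ast}:=\big(\ev_Y\circ(Y^{\ast}\otimes g)\big)^{\dag}\colon Y^{\ast}\to X^{\ast}
\]
characterized by $\ev_X\circ(g^{\ast}\otimes X)=\ev_Y\circ(Y^{\ast}\otimes g)$. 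Note the variance: a morphism $g\colon X\to Y$ yields $g^{\ast}\colon Y^{\ast}\to X^{\ast}$, which is exactly the behaviour of a functor $(-)^{\ast}\colon\Cc^\op\to\Cc$. The governing principle throughout is that, by the uniqueness half of the definition, two morphisms $T\to X^{\ast}$ are equal as soon as they agree after composing with $\ev_X\circ(-\otimes X)$; every identity below is verified in this way.

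Next I would check functoriality. For identities, $\ev_X\circ(\id_{X^{\ast}}\otimes X)=\ev_X=\ev_X\circ(X^{\ast}\otimes\id_X)$ forces $(\id_X)^{\ast}=\id_{X^{\ast}}$. For composition, given $g\colon X\to Y$ and $h\colon Y\to Z$, I would expand $\ev_X\circ\big((g^{\ast}\circ h^{\ast})\otimes X\big)$ using the interchange (bifunctoriality) law of $\otimes$ together with the defining equations of $g^{\ast}$ and $h^{\ast}$, reducing it to $\ev_Z\circ\big(Z^{\ast}\otimes(h\circ g)\big)$, which is precisely $\ev_X\circ\big((h\circ g)^{\ast}\otimes X\big)$; uniqueness then gives $(h\circ g)^{\ast}=g^{\ast}\circ h^{\ast}$. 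These are short diagram chases in which the only moves are the interchange law and the two defining equations.

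Finally I would establish naturality of \eqref{maprerig}, written $\Pi^X_T(u)=\ev_X\circ(u\otimes X)$, in both variables. Naturality in $T$ was already noted in the proof of Lemma \ref{lem:presheaf} and is immediate from the interchange law. Naturality in $X$ amounts to the commutativity, for $g\colon X\to Y$, of the square whose left vertical is postcomposition by $g^{\ast}$ and whose right vertical is precomposition by $T\otimes g$; chasing an arbitrary $u\colon T\to Y^{\ast}$ around both ways produces $\ev_Y\circ(u\otimes g)$, by the defining equation of $g^{\ast}$ on one side and the interchange law on the other. I expect no genuine obstacle here, since the content is entirely formal; the one point requiring care is bookkeeping the variance and keeping the applications of the interchange law straight. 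Indeed, the naturality-in-$X$ square is literally the defining equation of $g^{\ast}$, so the action of $(-)^{\ast}$ on morphisms is forced by the demand that \eqref{maprerig} be natural.

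Alternatively, the whole statement is a formal consequence of Lemma \ref{lem:presheaf} and the contravariant Yoneda lemma: the assignment $X\mapsto\hom_\Cc(-\otimes X,\unit)$ is a functor $\Cc^\op\to[\Cc^\op,\Set]$ all of whose values are representable, and since the Yoneda embedding $\Cc\to[\Cc^\op,\Set]$ is fully faithful, the representing objects $X^{\ast}$ assemble uniquely into a functor $(-)^{\ast}\colon\Cc^\op\to\Cc$ for which the isomorphisms of \eqref{maprerig} are automatically natural in $T$ and $X$.
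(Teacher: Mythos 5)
Your proposal is correct and follows essentially the same route as the paper: define $g^{\ast}$ as the unique morphism with $\ev_X\circ(g^{\ast}\otimes X)=\ev_Y\circ(Y^{\ast}\otimes g)$, deduce functoriality from the uniqueness clause, and observe that naturality in $X$ is exactly this defining equation while naturality in $T$ was already noted. You merely spell out the functoriality checks that the paper leaves implicit, and your closing Yoneda remark is a valid (and standard) alternative consistent with the paper's identification of pre-duals with representing objects in Lemma \ref{lem:presheaf}.
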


\begin{proof}
For every morphism $t:T\otimes X\rightarrow \unit$ there is a
unique morphism $t^\dag:T\rightarrow X^{\ast }$ such that $t=\ev%
_{X}\circ \left( t^\dag\otimes X\right) .$
For every morphism $f:X\rightarrow Y$ define $f^{\ast }:=({\ev%
_{Y}\circ \left( Y^{\ast }\otimes f\right) })^\dag:Y^{\ast }\rightarrow X^{\ast }$
so that%
\begin{equation}
\ev_{X}\circ \left( f^{\ast }\otimes X\right) =\ev_{Y}\circ
\left( Y^{\ast }\otimes f\right) .  \label{form:dual}
\end{equation}%
This defines the desired  functor. The naturality of \eqref{maprerig} in $T$ has already been observed, while the one in $X$ follows from \eqref{form:dual}.
\end{proof}


\begin{remark}
  We observed in Subsection \ref{sub:basics} that a $*$-autonomous category $\Cc$ is necessarily closed whence, a fortiori, pre-rigid. By definition, such a category is  equipped with a functor $(-)^{*}:\Cc^\op \rightarrow \Cc$ and a natural isomorphism $\hom _{\Cc}\left( A\otimes B,C^{\ast }\right) \cong \hom _{\Cc}\left( A,(B\otimes C)^{\ast }\right) $. Note that, if $\Cc$ is just a pre-rigid monoidal category, in view of Lemma \ref{lem:contravariant}, such a functor always exists and there is also the aforementioned  isomorphism (without asking $\Cc$ to be symmetric) as
  $$\hom _{\Cc}\left( A\otimes B,C^{\ast }\right)\cong\hom _{\Cc}\left( (A\otimes B)\otimes C,\unit\right) \cong \hom _{\Cc}\left( A\otimes (B\otimes C),\unit\right) \cong\hom _{\Cc}\left( A,(B\otimes C)^{\ast }\right) .$$
\end{remark}

The next result characterizes the situation when the functor $(-)^{*}:\Cc^\op \rightarrow \Cc$ is \emph{self-adjoint (on the right)} i.e. when there is a bijection $\hom _{\Cc}\left( Y,X^{\ast }\right) \cong \hom _{\Cc}\left( X,Y^{\ast }\right)$, natural both in $X$ and $Y$, see e.g. \cite[Chapter IV, Section 5]{MacLane-Moerdijk}.

\begin{proposition}\label{pro:adjprig}
Let $\Cc$ be a pre-rigid monoidal category. The following are equivalent.
\begin{enumerate}
   \item[$(1)$] There is a bijection $\hom _{\Cc}\left( Y,X^{\ast }\right) \cong \hom _{\Cc}\left( X,Y^{\ast }\right) $ natural both in $X$ and $Y$.
  \item[$(2)$] There is a bijection $\hom _{\Cc}\left( X\otimes Y,\unit\right) \cong \hom _{\Cc}\left( Y\otimes X,\unit\right) $ natural both in $X$ and $Y$.
\end{enumerate}
\end{proposition}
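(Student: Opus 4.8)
The plan is to reduce the equivalence to the single natural isomorphism supplied by pre-rigidity, namely the map \eqref{maprerig}, which by Lemma \ref{lem:contravariant} is natural in both $T$ and $X$ simultaneously. First I would record the two instances of this isomorphism that are needed: taking $T=Y$ yields
\begin{equation*}
\hom_{\Cc}\left( Y,X^{\ast }\right) \cong \hom_{\Cc}\left( Y\otimes X,\unit\right),
\end{equation*}
while taking $T=X$ (and renaming the pre-rigidity variable $X$ to $Y$) yields
\begin{equation*}
\hom_{\Cc}\left( X,Y^{\ast }\right) \cong \hom_{\Cc}\left( X\otimes Y,\unit\right);
\end{equation*}
by Lemma \ref{lem:contravariant} each of these is natural in $X$ and in $Y$ at once. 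Since the functor $(-)^*$ of that lemma is contravariant, all four presheaves occurring in $(1)$ and $(2)$ are functors $\Cc^\op\times\Cc^\op\to\Set$ (each $\hom$ being contravariant in both of its object arguments), so the phrase ``natural both in $X$ and $Y$'' is unambiguous for either statement.

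For $(1)\Rightarrow(2)$ I would splice the hypothesised bijection of $(1)$ between the two recorded isomorphisms, forming the composite
\begin{equation*}
\hom_{\Cc}\left( Y\otimes X,\unit\right)\cong\hom_{\Cc}\left( Y,X^{\ast }\right)\cong\hom_{\Cc}\left( X,Y^{\ast }\right)\cong\hom_{\Cc}\left( X\otimes Y,\unit\right),
\end{equation*}
in which the outer two isomorphisms are the pre-rigidity ones above and the middle one is $(1)$. As each factor is natural in $X$ and $Y$, so is the composite; relabelling the two object-variables $X\leftrightarrow Y$ (permissible precisely because the naturality is in both at once) then produces exactly the bijection asserted in $(2)$. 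The reverse implication $(2)\Rightarrow(1)$ is the same three-step chain read backwards, inverting the two pre-rigidity isomorphisms.

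The argument is in essence a formal diagram chase, so I do not expect a genuine obstacle; the only delicate point is the bookkeeping of variances, namely checking that the composite inherits naturality in both variables simultaneously. The one thing to verify with care is that the second recorded isomorphism --- obtained by specialising \eqref{maprerig} at $T=X$ rather than $T=Y$ --- is genuinely an instance of the naturality of Lemma \ref{lem:contravariant} and not an independent input; once this is granted, composing the three natural bijections and relabelling closes the proof.
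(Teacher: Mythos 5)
Your proposal is correct and is essentially the paper's own argument: the paper also just sandwiches the hypothesised bijection between the two pre-rigidity isomorphisms $\hom_{\Cc}(X,Y^{\ast})\cong\hom_{\Cc}(X\otimes Y,\unit)$ and $\hom_{\Cc}(Y,X^{\ast})\cong\hom_{\Cc}(Y\otimes X,\unit)$, presented as a commutative square whose vertical arrows are natural in both variables by Lemma \ref{lem:contravariant}. The only cosmetic difference is your explicit relabelling step, which the paper avoids by orienting the square so that each horizontal arrow is already the bijection asserted in the corresponding item.
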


\begin{proof} The statement follows from the following diagram where the two vertical arrows are bijective by pre-rigidity and natural in $X$ and $Y$ by Lemma \ref{lem:contravariant}.
\begin{equation*}\xymatrixcolsep{1cm}\xymatrixrowsep{.5cm}
\xymatrix{ \hom _{\Cc}\left( X,Y^{\ast }\right) \ar[rr]\ar[d]^\wr  &&\hom _{\Cc}\left( Y,X^{\ast }\right)\ar[d]^\wr\\
\hom _{\Cc}\left( X\otimes Y,\unit\right) \ar[rr]&& \hom _{\Cc}\left( Y\otimes X,\unit\right)
}
\end{equation*}\qedhere
\end{proof}

\begin{remark}
The easiest way to apply Proposition \ref{pro:adjprig} to get that $(-)^*$ is self-adjoint is to require that the category is braided (this idea will be applied in the proof of Proposition \ref{prop:prerigid}). Let us show, by means of an example, that the existence of the braiding is not necessary.
Let $(\Cc,\otimes,\unit)$ be a monoidal category such that the unit object $\unit$ is terminal in $\Cc$. By Proposition \ref{pro:terminal}, $\Cc$ is pre-rigid. Although $\Cc$ is not necessarily braided, we can apply Proposition \ref{pro:adjprig} to get that the functor $(-)^*:\Cc^\op \to\Cc$ is self-adjoint since $\hom _{\Cc}\left( X\otimes Y,\unit\right) \cong \hom _{\Cc}\left( Y\otimes X,\unit\right) $ is trivially bijective, being $\unit$ a terminal object, and natural both in $X$ and $Y$. An instance of this situation is given by the category $\Coalg(\Cc)$ for $\Cc$ braided monoidal, see Example \ref{exa:terminal}. Note that in general $\Coalg(\Cc)$ is not braided unless $\Cc$ is symmetric, see e.g. \cite[1.2.7]{Aguiar-Mahajan}.
\end{remark}

\section{Natural constructions of pre-rigid categories}\label{Sectionexamples}

One of the aims of the present paper is to explore different constructions of new pre-rigid monoidal categories starting from known ones.

In \cite{GV-OnTheDuality}, the first examples of braided pre-rigid monoidal categories that are considered in the context of liftability of the adjoint pair of functors provided by taking pre-duals are $\Vec$ with the twist as symmetry and $\Vec_{\ZZ_{2}}$ endowed with the ``super'' symmetry. In this section, we consider some further examples some of which play a key role in loc. cit.: the category $\Fam(\Cc)$ of families of a base category $\Cc$ and the category $\Maf(\Cc)=\Fam(\Cc^{\op })^{\op }$, which were both appearing in \cite{CaeDel}, with the eye on Turaev's group Hopf-(co)algebras, see next definition below. Remark that in \cite{GV-OnTheDuality}, the question of their pre-rigidity was not studied.
	 \\In Proposition \ref{pro:Famprerig} we prove the pre-rigidity of the category $\Fam(\Cc)$ for $\Cc$ any  pre-rigid monoidal category possessing products of pre-duals; conversely if $\Cc$ has an initial object and $\Fam(\Cc)$ is pre-rigid then $\Cc$ necessarily has products of pre-duals.  We notice that for Proposition \ref{pro:Famprerig} to hold, arbitrary products of pre-duals are needed. Indeed, Remark \ref{Remark-predualsnecessary} teaches that $\Fam({\Vec ^{\textrm{f}}})$ is not pre-rigid, although ${\Vec ^{\textrm{f}}}$ has a rigid monoidal structure. For the sake of ``completeness'' we also show in Proposition \ref{pro:Famclosed} that $\Fam(\Cc)$ is closed monoidal whenever $\Cc$ is  closed and has products.
\\$\Maf(\Cc)$ is a slightly different story: $\Maf(\Cc)$ is \emph{never} pre-rigid (Proposition \ref{pro:Maf}). We are able, however, to adjust the situation a bit: we study a variant $\Faf(\Cc)$ of this category and prove, in Proposition \ref{pro:Faf} that it is pre-rigid monoidal whenever $\Cc$ is. Next, Proposition \ref{pro:catfun} asserts that, given a small category $\mathcal{I}$ and a complete pre-rigid monoidal category $\Cc$, the functor category $\left[ \mathcal{I},\Cc\right] $ is pre-rigid as well. Finally, we conclude this section by considering the category $\Mm^G$ of externally $G$-graded $\Mm$-objects where $G$ is a monoid and $\Mm$ is a given monoidal category: in Proposition \ref{pro:funcatG}, we prove that the category $\Mm^G$ is pre-rigid whenever $\Mm$ is pre-rigid under mild assumptions. This will allow us to provide a non-trivial example of a pre-rigid monoidal category which is not right closed, see Example \ref{examplerightclosed}.

\subsection{The ``Zunino'' category of families \texorpdfstring{$\Fam(\Cc)$}{TEXT}}
Recall from \cite[\S 3]{Benabou-Fibered} the definition of the category $\Fam(\Cc)$ of families of $\Cc$. An object in this category is a pair $\underline{X}:=\left( X_{i}\right)
_{i\in I}=\left( I,X_{i}\right) $ where $I$ is a set and $X_{i}$ is an
object in $\Cc$ for all $i\in I$. Given two objects $\underline{X}=\left(
I,X_{i}\right) $ and $\underline{Y}=\left( J,Y_{j}\right) $ a morphism $%
\underline{X}\rightarrow \underline{Y}$ is a pair $\underline{\phi }:=\left(
f,\phi _{i} \right):\underline{X}\rightarrow
\underline{Y}$ where $f:I\rightarrow J$ is map and $\phi _{i
}:X_{i}\rightarrow Y_{f(i)}$ is a morphism in $\Cc$ for all $i \in I$.

If $\Cc$ is a monoidal category so is $\Fam(\Cc)$, as
follows (see e.g. \cite[Section 4]{GV-OnTheDuality}). Given objects $\underline{X}$ and $\underline{Y}$ as above, their
tensor product is defined by $\underline{X}\otimes \underline{Y}=\left(
I\times J,X_{i}\otimes Y_{j}\right) .$ Given morphisms $\underline{\phi }%
:=\left( f,\phi _{i}\right) :\underline{X}%
\rightarrow \underline{Y}$ and $\underline{\phi }^{\prime }:=\left( f^{\prime },\phi _{i'}^{\prime }\right)
:\underline{X}^{\prime }\rightarrow \underline{Y}^{\prime }$, their tensor
product is $\underline{\phi }\otimes \underline{\phi }^{\prime }=\left(
f\times f^{\prime },\phi _{i}\otimes
\phi _{i'}^{\prime }\right) $. The unit
object of $\Fam(\Cc)$ is $\underline{\unit}=\left( \left\{ \ast \right\} ,\unit%
\right) $, where $\unit$ is the unit object $\Cc$.

\begin{proposition}\label{pro:Famprerig}
Let $\Cc$ be a pre-rigid monoidal category. If $\Cc$ has products of pre-duals, then $\mathsf{%
Fam}(\Cc)$ is pre-rigid and the pre-dual of $\underline{Y}=(J,Y_j)$ is $%
\underline{Y}^{\ast }=( \left\{ \ast \right\} ,\prod\limits_{j\in
J}Y_{j}^{\ast }) .$

Conversely, if $\Cc$ has an initial object and $\mathsf{%
Fam}(\Cc)$ is pre-rigid, then  $\Cc$ has products of pre-duals.
\end{proposition}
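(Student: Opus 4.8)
The plan is to prove the two directions separately, using the presheaf characterization of pre-rigidity from Lemma~\ref{lem:presheaf} as the organizing tool.

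\medskip

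\textbf{Forward direction.} Assume $\Cc$ is pre-rigid and has products of pre-duals. Fix $\underline{Y}=(J,Y_j)$ and set $\underline{Y}^{\ast}=(\{\ast\},\prod_{j\in J}Y_j^{\ast})$. To verify pre-rigidity of $\Fam(\Cc)$, I would show that the map \eqref{maprerig} is a bijection for every object $\underline{T}=(I,T_i)$, i.e.\ I would exhibit a natural isomorphism
\[
\hom_{\Fam(\Cc)}\!\left(\underline{T},\underline{Y}^{\ast}\right)\;\cong\;\hom_{\Fam(\Cc)}\!\left(\underline{T}\otimes\underline{Y},\underline{\unit}\right).
\]
The key computation is to unwind both hom-sets. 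On the left, a morphism $\underline{T}\to\underline{Y}^\ast$ consists of the unique map $I\to\{\ast\}$ together with, for each $i\in I$, a morphism $T_i\to\prod_{j}Y_j^\ast$ in $\Cc$; by the universal property of the product this is the same as a family $(T_i\to Y_j^\ast)_{j\in J}$ for each $i$. On the right, since $\underline{\unit}=(\{\ast\},\unit)$ has singleton index set, a morphism $\underline{T}\otimes\underline{Y}=(I\times J,T_i\otimes Y_j)\to\underline{\unit}$ is forced on index sets (everything maps to $\ast$) and amounts to a family of morphisms $T_i\otimes Y_j\to\unit$ in $\Cc$, one for each $(i,j)\in I\times J$. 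Now pre-rigidity of $\Cc$ gives, for each fixed $j$, a natural bijection $\hom_\Cc(T_i,Y_j^\ast)\cong\hom_\Cc(T_i\otimes Y_j,\unit)$, and assembling these over all $i,j$ matches the two sides. Naturality in $\underline{T}$ follows from naturality of the product universal property together with naturality of the pre-rigidity bijection in $\Cc$ (Lemma~\ref{lem:contravariant}). Finally I would record the definition of the evaluation $\ev_{\underline{Y}}$ as the morphism corresponding to $1_{\underline{Y}^\ast}$ under this bijection, as in the proof of Lemma~\ref{lem:presheaf}.

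\medskip

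\textbf{Converse direction.} Assume $\Cc$ has an initial object $\mathbf{0}$ and $\Fam(\Cc)$ is pre-rigid; I want to produce, for an arbitrary family $(Y_j)_{j\in J}$ of pre-duals in $\Cc$, their product $\prod_j Y_j^\ast$. The idea is to feed a cleverly chosen object into the pre-rigidity of $\Fam(\Cc)$ and read off the product from the representing object. I would consider $\underline{Y}=(J,Y_j)$ and examine the pre-dual $\underline{Y}^\ast=(K,Z_k)$ supplied by pre-rigidity of $\Fam(\Cc)$, then test it against objects $\underline{T}=(\{\ast\},T)$ concentrated at a single index. For such $\underline{T}$, the bijection \eqref{maprerig} reads
\[
\hom_{\Fam(\Cc)}\!\left((\{\ast\},T),(K,Z_k)\right)\;\cong\;\hom_{\Fam(\Cc)}\!\left((J,T\otimes Y_j),\underline{\unit}\right).
\]
The right-hand side unwinds to $\prod_{j\in J}\hom_\Cc(T\otimes Y_j,\unit)\cong\prod_{j\in J}\hom_\Cc(T,Y_j^\ast)$, using pre-rigidity of $\Cc$ in each factor. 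The left-hand side unwinds to $\coprod_{k\in K}\hom_\Cc(T,Z_k)$, a disjoint union indexed by the choice of target index $k$. The role of the initial object is to rule out the multi-index behaviour: by testing additionally on $\underline{T}$ with $T=\mathbf{0}$ (or by a connectedness argument on the index set $K$), I expect to force $K$ to be a singleton, after which the surviving single $Z_k$ satisfies $\hom_\Cc(T,Z_k)\cong\prod_{j}\hom_\Cc(T,Y_j^\ast)$ naturally in $T$, so $Z_k\cong\prod_j Y_j^\ast$ by Yoneda.

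\medskip

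The main obstacle is precisely this last point: showing that the representing index set $K$ collapses to a single element, so that a \emph{coproduct} of representable presheaves (the left-hand side) can equal a \emph{product} over $J$ of hom-sets (the right-hand side). A coproduct of representables is representable only when it has one summand (for a connected index), and this is where the hypothesis that $\Cc$ has an initial object does the work: evaluating at $T=\mathbf{0}$ makes each $\hom_\Cc(\mathbf{0},Z_k)$ and $\hom_\Cc(\mathbf{0},Y_j^\ast)$ a singleton, letting me count elements on both sides and pin down $|K|=1$. I would be careful to check that the naturality of the resulting isomorphism in $T$ is genuine, so that Yoneda applies and yields an honest product object $\prod_j Y_j^\ast$ in $\Cc$, not merely a cardinality match of hom-sets.
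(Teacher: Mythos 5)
Your proof follows essentially the same route as the paper in both directions: the same chain of bijections for the forward implication, and, for the converse, the same use of the initial object to force the index set of the pre-dual of $(J,Y_j)$ to collapse to a singleton, after which the unique component is identified with $\prod_{j}Y_j^{\ast}$ via representability of $\prod_j\hom_\Cc(-,Y_j^{\ast})$. The converse direction as you sketch it is correct and matches the paper's argument step for step.

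The one point that needs more care is the naturality claim in the forward direction. You assert that naturality in $\underline{T}$ of the composite bijection ``follows from naturality of the product universal property together with naturality of the pre-rigidity bijection in $\Cc$,'' but your chain also uses, twice, the identification $\hom_{\Fam(\Cc)}\left(\underline{T},FY\right)\cong\prod_{i\in I}\hom_\Cc(T_i,Y)$, and the paper explicitly observes that this identification is natural in $Y$ but \emph{not} in $\underline{T}$ (the indexing set varies with $\underline{T}$). So naturality of the composite does not follow formally from the naturality of its ingredients. The composite is nonetheless natural, and the paper establishes this by direct computation: it defines $\ev_{\underline{Y}}:=\left(t_{\{\ast\}\times J},\ev_{Y_j}\circ(p_j\otimes Y_j)\right)$ and verifies by hand that the composite bijection equals $u\mapsto\ev_{\underline{Y}}\circ\left(u\otimes\underline{Y}\right)$, which is automatically of the form required by the definition of pre-rigidity. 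Your closing remark that you would ``record the evaluation as the image of the identity, as in Lemma \ref{lem:presheaf}'' is the right instinct, but as written it presupposes the naturality you have not yet justified; the clean fix is to compute the explicit formula for the composite, as the paper does.
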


\begin{proof}
Consider the diagonal functor $F:\Cc\rightarrow \Fam(%
\Cc):X\mapsto \left( \left\{ \ast \right\} ,X\right) \ $considered
in \cite[\S 3]{Benabou-Fibered} (where it is denoted by $\eta _{\Cc}$). Note that for every set $S$, there is a unique map $S\rightarrow \left\{
\ast \right\} $ that will be denoted by $t_{S}.$ The assignment $\left( t_{I}:I\rightarrow \left\{ \ast \right\} ,\phi
_{i}:X_{i}\rightarrow Y\right) \mapsto \left( \phi _{i}\right) _{i\in I}$
defines a bijection (natural in $Y$ but not in $\underline{X}$)
\begin{equation}\label{form:homFam}
\hom _{\Fam(\Cc)}\left( \underline{X},FY\right) \cong
\prod\limits_{i\in I}\hom _{\Cc}\left( X_{i},Y\right) .
\end{equation}%
Assume $\Cc$ has products of pre-duals. Then we can consider $\prod\limits_{j\in J}Y_{j}^{\ast }$ and we get a chain of bijections%
\begin{gather*}
\hom _{\Fam(\Cc)}\left( \underline{X},F(
\prod\limits_{j\in J}Y_{j}^{\ast }) \right) \overset{\eqref{form:homFam}}\cong \prod\limits_{i\in I}%
\hom _{\Cc}\left( X_{i},\prod\limits_{j\in J}Y_{j}^{\ast
}\right) \cong \prod\limits_{i\in I}\prod\limits_{j\in J}\hom _{%
\Cc}\left( X_{i},Y_{j}^{\ast }\right)  \\
\cong \prod\limits_{\left( i,j\right) \in I\times J}\hom _{\Cc%
}\left( X_{i}\otimes Y_{j},\unit\right) \overset{\eqref{form:homFam}}\cong \hom _{\Fam%
(\Cc)}\left( (I\times J,X_i\otimes Y_j),F\unit\right) =%
\hom _{\Fam(\Cc)}\left( \underline{X}\otimes
\underline{Y},\underline{\unit}\right) .
\end{gather*}%
Since the first isomorphism above is not natural in $\underline{X}$ we can not immediately conclude that the composition is natural and hence that $\Fam(\Cc)$ is pre-rigid. Nevertheless this composition maps $\left( t_{I}:I\rightarrow \left\{ \ast \right\} ,\phi
_{i}:X_{i}\rightarrow \prod\limits_{j\in J}Y_{j}^{\ast }\right) $ to $\left(
t_{I\times J}:I\times J\rightarrow \left\{ \ast \right\} ,\left( \ev%
_{Y_{j}}\circ \left( p_{j}\phi _{i}\otimes Y_{j}\right) \right) \right)$, where \ $p_{j}:\prod\limits_{j\in J}Y_{j}^{\ast }\rightarrow Y_{j}^{\ast }$
denotes the canonical projection. We set $\underline{Y}^{\ast
}:=F\left( \prod\limits_{j\in J}Y_{j}^{\ast }\right) .$ If we take $%
\underline{X}=\underline{Y}^{\ast }$ and apply the above composition to $%
\mathrm{Id}_{\underline{Y}^{\ast }}$, we get $\ev_{\underline{Y}%
}:=\left( t_{\left\{ \ast \right\} \times J},\ev_{Y_{j}}\circ \left(
p_{j}\otimes Y_{j}\right) \right) .$ By the following computation we see
that the composition above is $\left( t_{I},\phi _{i}\right) \mapsto \mathrm{%
ev}_{\underline{Y}}\circ \left( \left( t_{I},\phi _{i}\right) \otimes
{\underline{Y}}\right) .$
\begin{eqnarray*}
\ev_{\underline{Y}}\circ \left( \left( t_{I},\phi _{i}\right)
\otimes {\underline{Y}}\right)  &=&\left( t_{\left\{ \ast
\right\} \times J},\ev_{Y_{j}}\circ \left( p_{j}\otimes Y_{j}\right)
\right) \circ \left( \left( t_{I},\phi _{i}\right) \otimes \left( {J},%
{Y_{j}}\right) \right)  \\
&=&\left( t_{\left\{ \ast \right\} \times J},\ev_{Y_{j}}\circ \left(
p_{j}\otimes Y_{j}\right) \right) \circ \left( t_{I}\times {J},\phi
_{i}\otimes {Y_{j}}\right)  \\
&=&\left( t_{\left\{ \ast \right\} \times J}\circ \left( t_{I}\times
{J}\right) ,\ev_{Y_{j}}\circ \left( p_{j}\otimes Y_{j}\right)
\circ \left( \phi _{i}\otimes {Y_{j}}\right) \right)  \\
&=&\left( t_{I\times J},\ev_{Y_{j}}\circ \left( p_{j}\phi
_{i}\otimes Y_{j}\right) \right)
\end{eqnarray*}%
As a consequence $\Fam(\Cc)$ is pre-rigid and the pre-dual
of $\underline{Y}$ is $\underline{Y}^{\ast }:=F( \prod\limits_{j\in
J}Y_{j}^{\ast }) .$

Conversely, assume that $\Cc$ has an initial object $\initial$ and that $\mathsf{%
Fam}(\Cc)$ is pre-rigid. Let $\underline{Y}=(J,Y_j)$ be a family of objects in $\Cc$. By hypothesis this family has a pre-dual $\underline{Y}^*\in \Fam(\Cc)$. Thus, there is a set $S$ and an object $L_s$ for every $s\in S$ such that $\underline{Y}^*=(S,L_s)$. We have a chain of bijections
\begin{gather*}
S\cong\hom _{\mathsf{Set}}\left( \{*\},S \right) \cong
\hom _{\Fam(\Cc)}\left( ({*},\initial),(S,L_s) \right) \cong
\hom _{\Fam(\Cc)}\left( F(\initial),\underline{Y}^* \right) \cong
\hom _{\Fam(\Cc)}\left( F(\initial)\otimes \underline{Y},\underline{\unit} \right)\\=
\hom _{\Fam(\Cc)}\left( F(\initial)\otimes \underline{Y},F{\unit} \right)
\overset{\eqref{form:homFam}} \cong
\prod\limits_{j\in J}\hom _{\Cc}\left( \initial\otimes Y_j,\unit \right)\cong
\prod\limits_{j\in J}\hom _{\Cc}\left( \initial,Y_j^* \right)
\end{gather*}
and the latter is a singleton as $\initial$ is an initial object. Thus $S$ is a singleton and hence we can choose $\underline{Y}^*=F(L)$ for some $L\in \Cc$. Note that the evaluation $\ev_{\underline{Y}}:\underline{Y}^*\otimes \underline{Y}\to \underline{\unit}$ is of the form $(t_{\{*\}\times J},\ev_{j})$ for some morphism $\ev_{j}:L\otimes {Y_j}\to {\unit}$. Set $p_j:=(\ev_{j})^\dag:L\to Y_j^*$. Let us show that $(L,(p_j)_{j\in J})$ is the product of the family of pre-duals $(Y_j^*)_{j\in J}$. To this aim, consider the following chain of  bijections
\begin{gather*}
\hom _{\Cc}\left( X,L \right)\overset{\eqref{form:homFam}}\cong
\hom _{\Fam(\Cc)}\left( FX,FL \right) =
\hom _{\Fam(\Cc)}\left( FX,\underline{Y}^* \right)\cong
\hom _{\Fam(\Cc)}\left( FX\otimes \underline{Y},\underline{\unit} \right)\\\cong
\hom _{\Fam(\Cc)}\left( FX\otimes \underline{Y},F{\unit} \right)\overset{\eqref{form:homFam}}\cong
\prod\limits_{j\in J}\hom _{\Cc}\left( X\otimes Y_j,\unit \right)\cong
\prod\limits_{j\in J}\hom _{\Cc}\left( X ,Y_j^* \right).
\end{gather*}
A direct computation shows that this composition gives the bijection $$\hom _{\Cc}\left( X,L \right)\to \prod\limits_{j\in J}\hom _{\Cc}\left( X ,Y_j^* \right),f\mapsto \left((\ev_{j}\circ (f\otimes Y_j))^\dag\ \right)_{j\in J}.$$ Note that $\ev_{Y_j}\circ ((p_j\circ f)\otimes Y_j)=\ev_{Y_j}\circ (p_j\otimes Y_j)\circ (f\otimes Y_j)=\ev_{Y_j}\circ ((\ev_{j})^\dag\otimes Y_j)\circ (f\otimes Y_j)= \ev_{j}\circ (f\otimes Y_j)$ so that $(\ev_{j}\circ (f\otimes Y_j))^\dag=p_j\circ f$. Hence the above bijection maps $f$ to the family $\left(p_j\circ f\right)_{j\in J}$. This means that  $(L,(p_j)_{j\in J})$ is the product of the family $(Y_j^*)_{j\in J}$.
\end{proof}

\begin{remark}\label{Remark-predualsnecessary}
Consider any pre-rigid monoidal category $(\Cc,\otimes, \unit)$ with an initial object and suppose $\Fam(\Cc)$ is pre-rigid. By Proposition \ref{pro:Famprerig} we get $\Cc$ has products of pre-duals. In particular, for every set $S$, one has that $\prod_S^{\Cc}\unit^*$ exists, where $\prod^{\Cc}$ denotes the product in $\Cc$. Now, by Corollary \ref{cor:uniqueness}, one has $\unit^*\cong   \unit.$ Thus $\prod_S^{\Cc}\unit$ exists.

As an instance of this situation, consider a monoidal category $(\Aa,\otimes, \unit)$ and let $\Cc$ be $\Aa^f$, the full subcategory of $\Aa$ consisting of all rigid objects in $\Aa$, which is known to form a rigid monoidal category  $(\Aa^f,\otimes, \unit)$. By the foregoing, if  $\Aa^f$ has an initial object and $\prod_S^{\Aa^f}\unit$ does not exist, we can conclude that $\Fam(\Aa^f)$ is not pre-rigid. The following are examples of this situation.
\begin{enumerate}
  \item Putting $\Aa=\Vec$, $\Aa^f$ comes out to be $\Vec^{\textrm{f}}$. It has $0$ as a zero (whence initial) object. Note that, if $\prod_{\NN}^{\Vec^{f}}k$ existed, then we would have isomorphisms of vector spaces
    \begin{gather*}
  \prod_{\NN}^{\Vec^{f}}k\cong \hom _{\Vec^{f}}\left(k,\prod_{\NN}^{\Vec^{f}}k\right)\cong
     \prod_{\NN}^{\mathsf{Set}}\hom _{\Vec^{f}}(k,k)\cong
     \prod_{\NN}^{\mathsf{Set}}k\notin \Vec^{f};
    \end{gather*}
     a contradiction. It follows that $\Fam(\Vec^{\textrm{f}})$ is not pre-rigid.
  \item More generally consider a commutative ring $R$ and the monoidal category $(R\text{-}\mathsf{Mod},\otimes_R,R)$ of $R$-modules. It is well-known that $R\text{-}\mathsf{Mod}^f$ coincides with the category of finitely-generated projective $R$-modules (cf. \cite[Proposition 2.6]{De}). It has $0$ as a zero object. Moreover, by an argument similar to the above one, one checks that $\prod_{\NN}^{R\text{-}\mathsf{Mod}^f}R$ does not exist. It follows that $\Fam(R\text{-}\mathsf{Mod}^f)$ is not pre-rigid.
  \item Similarly, for $R$ a noetherian commutative ring, if we denote by $(\Aa:=\mathsf{Comod}\text{-}H,\otimes_R,R)$ the category of right $H$-comodules for a Hopf $R$-algebra $H$, then every object in $\Aa^f$ is finitely-generated  projective over $R$ (see \cite[Example]{Ulbrich}). Moreover $0$ is a zero object in $\Aa^f$. Let us check that $\prod_{\NN}^{\Aa^f}R$ does not exist. Suppose it does; then it is a finitely generated $R$-module, hence noetherian ($R$ being a noetherian ring). As a consequence its $R$-submodule $\left(\prod_{\NN}^{\Aa^f}R\right)^{\mathrm{co}H}$ of $H$-coinvariant elements must be finitely generated. This leads to the desired contradiction. In fact, since $R$ is a comodule via trivial coaction, we get
        \begin{gather*}
  \left(\prod_{\NN}^{\Aa^f}R\right)^{\mathrm{co}H}\cong \hom _{\Aa^f}\left(R,\prod_{\NN}^{\Aa^f}R\right)\cong
     \prod_{\NN}^{\mathsf{Set}}\hom _{\Aa^f}\left(R,R\right)\cong
     \prod_{\NN}^{\mathsf{Set}}R^{\mathrm{co}H}= \prod_{\NN}^{\mathsf{Set}}R.
    \end{gather*}
    Thus $\Fam(\Aa^f)$ is not pre-rigid.
\end{enumerate}
\end{remark}

So far we have dealt with the pre-rigidity of $\Fam(\Cc)$ but, to the best of our knowledge, it is even unknown whether $\Fam(\Cc)$ inherits closeness from $\Cc$ except in case $\Cc$ is cartesian which was considered in \cite[Lemma 4.1]{Carboni} and in \cite[Theorem 2.11]{AR}, the latter giving a complete characterization of cartesian closeness of $\Fam(\Cc):=\Sigma\Cc$. The following result fills this gap.

\begin{proposition}\label{pro:Famclosed}
Let $\Cc$ be a complete closed monoidal category with products. Then $\Fam(\Cc)$ is closed monoidal,
with $[(J,Y_j),( U,Z_u)]:=([J,U],[Y,Z] _{\alpha})$ where for each funtion $\alpha:J\to U$, we set $\left[ \underline{Y},\underline{Z}\right] _{\alpha
}:=\prod\limits_{j\in J}\left[ Y_{j},Z_{\alpha \left( j\right) }\right]$.
Given $\underline{c}=\left( q:U\rightarrow U^{\prime
},z_{u}:Z_{u}\rightarrow Z_{q\left( u\right) }^{\prime }\right) :\underline{Z%
}\rightarrow \underline{Z}^{\prime }$ we set $$\left[ \underline{Y},%
\underline{c}\right] :=\left( \left[ J,q\right] :\left[ J,U\right]
\rightarrow \left[ J,U^{\prime }\right] ,\prod\limits_{j\in J}\left[
Y_{j},z_{\alpha \left( j\right) }\right] :\left[ \underline{Y},\underline{Z}%
\right] _{\alpha }\rightarrow \left[ \underline{Y},\underline{Z}^{\prime }%
\right] _{q\circ \alpha }\right) :\left[ \underline{Y},\underline{Z}\right]
\rightarrow \left[ \underline{Y},\underline{Z}^{\prime }\right] .$$
\end{proposition}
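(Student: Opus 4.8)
The plan is to exhibit the right closed structure directly through a natural bijection of hom-sets
\[
\hom_{\Fam(\Cc)}\bigl(\underline{X}\otimes\underline{Y},\underline{Z}\bigr)\cong\hom_{\Fam(\Cc)}\bigl(\underline{X},[\underline{Y},\underline{Z}]\bigr),
\]
natural in $\underline{X}=(I,X_i)$ and $\underline{Z}=(U,Z_u)$, for a fixed $\underline{Y}=(J,Y_j)$. The starting point is the elementary description of hom-sets in $\Fam(\Cc)$ obtained by unwinding the definition of a morphism as a pair $(f,\phi_a)$: for objects $(A,P_a)$ and $(B,Q_b)$ one has
\[
\hom_{\Fam(\Cc)}\bigl((A,P_a),(B,Q_b)\bigr)\cong\coprod_{f\colon A\to B}\ \prod_{a\in A}\hom_{\Cc}\bigl(P_a,Q_{f(a)}\bigr),
\]
the disjoint union running over all functions $f\colon A\to B$.

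Applying this to both sides, and using $[J,U]=U^{J}$ together with $[\underline{Y},\underline{Z}]_{\alpha}=\prod_{j}[Y_j,Z_{\alpha(j)}]$, I would rewrite
\begin{gather*}
\hom_{\Fam(\Cc)}\bigl(\underline{X}\otimes\underline{Y},\underline{Z}\bigr)\cong\coprod_{g\colon I\times J\to U}\ \prod_{(i,j)}\hom_{\Cc}\bigl(X_i\otimes Y_j,Z_{g(i,j)}\bigr),\\
\hom_{\Fam(\Cc)}\bigl(\underline{X},[\underline{Y},\underline{Z}]\bigr)\cong\coprod_{h\colon I\to U^{J}}\ \prod_{i\in I}\hom_{\Cc}\Bigl(X_i,\prod_{j\in J}[Y_j,Z_{h(i)(j)}]\Bigr).
\end{gather*}
The two indexing coproducts are matched by currying in $\Set$, i.e. the cartesian-closedness bijection $\hom_{\Set}(I\times J,U)\cong\hom_{\Set}(I,U^{J})$ given by $g\leftrightarrow h$ with $g(i,j)=h(i)(j)$; in particular $Z_{g(i,j)}=Z_{h(i)(j)}$ for matched pairs. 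For a fixed matched pair I would then identify the corresponding product factors through the chain
\begin{gather*}
\prod_{(i,j)}\hom_{\Cc}\bigl(X_i\otimes Y_j,Z_{g(i,j)}\bigr)\cong\prod_{i}\prod_{j}\hom_{\Cc}\bigl(X_i\otimes Y_j,Z_{g(i,j)}\bigr)\\
\cong\prod_{i}\prod_{j}\hom_{\Cc}\bigl(X_i,[Y_j,Z_{g(i,j)}]\bigr)\cong\prod_{i}\hom_{\Cc}\Bigl(X_i,\prod_{j}[Y_j,Z_{g(i,j)}]\Bigr),
\end{gather*}
using in turn the splitting of a product indexed by $I\times J$, the closedness of $\Cc$ via the adjunction $(-)\otimes Y_j\dashv[Y_j,-]$, and the universal property of the $J$-indexed product in $\Cc$. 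This is exactly where the hypotheses on $\Cc$ enter: closedness provides $[Y_j,-]$ and the assumed products provide the $J$-indexed product.

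The routine but genuinely laborious part, and the step I expect to be the main obstacle, is verifying that the assembled bijection is natural in $\underline{X}$ and $\underline{Z}$, and that the formula for $[\underline{Y},\underline{c}]$ given in the statement is the transported action making $[\underline{Y},-]$ a functor. Each constituent bijection is natural in its own variables, but a morphism in $\Fam(\Cc)$ carries both a set-component and a $\Cc$-component, and a morphism $\underline{X}'\to\underline{X}$ may move data between distinct summands of the disjoint union; hence I would check the naturality squares summand-by-summand, verifying that the currying bijection on index sets is compatible with composition at the set level while the $\Cc$-level naturality of closedness and of the product universal property handles the $\Cc$-component. Once each such square is seen to commute, the natural isomorphism is in hand, producing the right adjoint $[\underline{Y},-]$ and thereby the right closed structure; the displayed action of $[\underline{Y},-]$ on $\underline{c}=(q,z_u)$—postcomposition $[J,q]$ on index sets and $\prod_{j}[Y_j,z_{\alpha(j)}]$ on components—is precisely what these commuting squares force.
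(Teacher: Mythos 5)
Your proposal is correct and follows essentially the same route as the paper: the bijection you describe (curry the indexing function in $\Set$, adjoint-transpose each component via $(-)\otimes Y_j\dashv[Y_j,-]$, and tuple into the $J$-indexed product) is exactly the map $\alpha$ the paper constructs explicitly, and the paper likewise spends most of its effort on the naturality verification you flag, which in particular confirms that the stated formula for $[\underline{Y},\underline{c}]$ is the forced one. The only difference is organizational: you package the hom-sets as coproducts-over-functions of products before matching them, whereas the paper writes the mutually inverse assignments and the naturality squares out elementwise.
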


\begin{proof}
The category $\mathsf{Set}$ is monoidal closed. In fact, we have a bijection
\begin{equation*}
\hom _{\mathsf{Set}}\left( I\times J,U\right) \cong \hom _{%
\mathsf{Set}}\left( I,\left[ J,U\right] \right)
\end{equation*}%
that assigns to a map $f:I\times J\rightarrow U$ the map $f^\dag%
:I\rightarrow \left[ J,U\right] $, where $f^\dag\left( i\right) \left(
j\right) =f\left( i,j\right) .$ Set $\ev_{J,U}:\left[ J,U\right]
\times J\rightarrow U,\left( \alpha ,j\right) \mapsto \alpha \left( j\right)
.$

As a consequence we have that the map $
\alpha  =\alpha _{\underline{X},\underline{Z}}:\hom _{\Fam(%
\Cc)}\left( \underline{X}\otimes \underline{Y},\underline{Z}\right)
\rightarrow \hom _{\Fam(\Cc)}\left( \underline{X},%
\left[ \underline{Y},\underline{Z}\right] \right)$

defined by the assignment
\begin{eqnarray*}
\left( f:I\times J\rightarrow U,\phi _{\left( i,j\right) }:X_{i}\otimes
Y_{j}\rightarrow Z_{f\left( i,j\right) }\right)  &\mapsto &\left( f^\dag%
:I\rightarrow \left[ J,U\right] ,\Delta \left( (\phi _{\left(
i,j\right) })^\dag\right) _{j\in J}:X_{i}\rightarrow \prod\limits_{j\in J}\left[
Y_{j},Z_{f\left( i,j\right) }\right] \right)
\end{eqnarray*}%
is invertible, where, given $h:X\otimes Y\rightarrow Z,$ we denoted by $h^\dag:X\rightarrow \left[ Y,Z\right] $ the unique morphism such that $\mathrm{ev%
}_{Y,Z}\circ \left( h^\dag\otimes Y\right) =h,$ (here $\ev%
_{Y,Z}:\left[ Y,Z\right] \otimes Y\rightarrow Z$). Its inverse $\beta =\beta
_{\underline{X},\underline{Z}}:\hom _{\Fam(\Cc%
)}\left( \underline{X},\left[ \underline{Y},\underline{Z}\right] \right)
\rightarrow \hom _{\Fam(\Cc)}\left( \underline{X}%
\otimes \underline{Y},\underline{Z}\right) $ maps $\left( g:I\rightarrow %
\left[ J,U\right] ,\psi _{i}:X_{i}\rightarrow \prod\limits_{j\in J}\left[
Y_{j},Z_{g\left( i\right) \left( j\right) }\right] \right) $ to $\left(
\ev_{J,U}\circ \left( g\times J\right) :I\times J\rightarrow U,%
\ev_{Y_{j},Z_{g\left( i\right) \left( j\right) }}\circ \left(
p_{j}\psi _{i}\otimes Y_{j}\right) :X_{i}\otimes Y_{j}\rightarrow Z_{g\left(
i\right) \left( j\right) }\right) $. In fact,
\begin{eqnarray*}
\beta \alpha \left( \left( f,\phi _{\left( i,j\right) }\right) \right)
&=&\beta \left( \left( f^\dag,\Delta \left( (\phi _{\left(
i,j\right) })^\dag\right) _{j\in J}\right) \right)  \\
&=&\left( \ev_{J,U}\circ \left( f^\dag\times J\right) ,\mathrm{%
ev}_{Y_{j},Z_{f\left( i,j\right) }}\circ \left( p_{j}\Delta \left( (
\phi _{\left( i,j\right) })^\dag\right) _{j\in J}\otimes Y_{j}\right) \right)  \\
&=&\left( f,\ev_{Y_{j},Z_{f\left( i,j\right) }}\circ \left( (
\phi _{\left( i,j\right) })^\dag\otimes Y_{j}\right) \right) =\left( f,\phi
_{\left( i,j\right) }\right) .
\end{eqnarray*}%
Conversely
\begin{eqnarray*}
\alpha \beta \left( \left( g,\psi _{i}\right) \right)  &=&\alpha \left(
\ev_{J,U}\circ \left( g\times J\right) ,\ev%
_{Y_{j},Z_{g\left( i\right) \left( j\right) }}\circ \left( p_{j}\psi
_{i}\otimes Y_{j}\right) \right)  \\
&=&\left( \left( \ev_{J,U}\circ \left( g\times J\right) \right) ^\dag,\Delta \left( \left( \ev_{Y_{j},Z_{g\left( i\right)
\left( j\right) }}\circ \left( p_{j}\psi _{i}\otimes Y_{j}\right) \right) ^\dag\right) _{j\in J}\right)  \\
&=&\left( g,\Delta \left( p_{j}\psi _{i}\right) _{j\in J}\right) =\left(
g,\psi _{i}\right) .
\end{eqnarray*}%
In order to check that $(-)\otimes \underline{Y}\dashv\left[ \underline{Y},-\right]$
it suffices to check the naturality of $\beta _{\underline{X},\underline{Z}}$.

Given $\underline{a}=\left( p:I^{\prime }\rightarrow I,x_{i^{\prime
}}:X_{i^{\prime }}^{\prime }\rightarrow X_{p\left( i^{\prime }\right)
}\right) :\underline{X}^{\prime }\rightarrow \underline{X},$ we have
\begin{eqnarray*}
&&\left( \beta _{\underline{X^{\prime }},\underline{Z^{\prime }}}\circ
\hom _{\Fam(\Cc)}\left( \underline{a},\left[
\underline{Y},\underline{c}\right] \right) \right) \left( g,\psi _{i}\right)
\\
&=&\beta _{\underline{X^{\prime }},\underline{Z^{\prime }}}\left( \left[
\underline{Y},\underline{c}\right] \circ \left( g,\psi _{i}\right) \circ
\underline{a}\right) \\
&=&\beta _{\underline{X^{\prime }},\underline{Z^{\prime }}}\left( \left( %
\left[ J,q\right] ,\prod\limits_{j\in J}\left[ Y_{j},z_{\alpha \left(
j\right) }\right] \right) \circ \left( g,\psi _{i}\right) \circ \left(
p,x_{i^{\prime }}\right) \right) \\
&=&\beta _{\underline{X^{\prime }},\underline{Z^{\prime }}}\left( \left( %
\left[ J,q\right] \circ g\circ p,\left( \prod\limits_{j\in J}\left[
Y_{j},z_{g\left( i\right) \left( j\right) }\right] \right) \circ \psi
_{p\left( i^{\prime }\right) }\circ x_{i^{\prime }}\right) \right) \\
&=&\left( \ev_{J,U^{\prime }}\circ \left( \left[ J,q\right] gp\times
J\right) ,\ev_{Y_{j},Z_{q\left( g\left( i\right) \left( j\right)
\right) }^{\prime }}\circ \left( p_{j}\left( \prod\limits_{j\in J}\left[
Y_{j},z_{g\left( i\right) \left( j\right) }\right] \right) \psi _{p\left(
i^{\prime }\right) }x_{i^{\prime }}\otimes Y_{j}\right) \right) \\
&=&\left( \ev_{J,U^{\prime }}\circ \left( \left[ J,q\right] gp\times
J\right) ,\ev_{Y_{j},Z_{q\left( g\left( i\right) \left( j\right)
\right) }^{\prime }}\circ \left( \left[ Y_{j},z_{g\left( i\right) \left(
j\right) }\right] p_{j}\psi _{p\left( i^{\prime }\right) }x_{i^{\prime
}}\otimes Y_{j}\right) \right) \\
&=&\left( q\circ \ev_{J,U}\circ \left( g\times J\right) \circ \left(
p\times J\right) ,z_{g\left( i\right) \left( j\right) }\circ \ev%
_{Y_{j},Z_{g\left( i\right) \left( j\right) }}\circ \left( p_{j}\psi
_{p\left( i^{\prime }\right) }\otimes Y_{j}\right) \circ \left( x_{i^{\prime
}}\otimes Y_{j}\right) \right) \\
&=&\left( q,z_{u}\right) \circ \left( \ev_{J,U}\circ \left( g\times
J\right) ,\ev_{Y_{j},Z_{g\left( i\right) \left( j\right) }}\circ
\left( p_{j}\psi _{i}\otimes Y_{j}\right) \right) \circ \left( p\times
J,x_{i^{\prime }}\otimes Y_{j}\right) \\
&=&\left( q,z_{u}\right) \circ \left( \ev_{J,U}\circ \left( g\times
J\right) ,\ev_{Y_{j},Z_{g\left( i\right) \left( j\right) }}\circ
\left( p_{j}\psi _{i}\otimes Y_{j}\right) \right) \circ \left( \left(
p,x_{i^{\prime }}\right) \otimes \underline{Y}\right) \\
&=&\underline{c}\circ \beta _{\underline{X},\underline{Z}}\left( g,\psi
_{i}\right) \circ \left( \underline{a}\otimes \underline{Y}\right)
=\left( \hom _{\Fam(\Cc)}\left( \underline{a}%
\otimes \underline{Y},\underline{c}\right) \circ \beta _{\underline{X},%
\underline{Z}}\right) \left( g,\psi _{i}\right) .
\end{eqnarray*}%
This shows that $\beta _{\underline{X},\underline{Z}}$ is natural so that $%
-\otimes \underline{Y}\dashv \left[ \underline{Y},-\right] $ and hence $%
\Fam(\Cc)$ is closed.
\end{proof}

Since a closed category is necessarily pre-rigid (Proposition \ref{prop:prerigid2}), the previous result ensures that, if $\Cc$ is a complete closed monoidal category with products, then $\Fam(\Cc)$ is in particular pre-rigid. Anyway these conditions on $\Cc$ are decidedly heavier than those assumed in Proposition \ref{pro:Famprerig}.

\subsection{The ``Turaev'' category \texorpdfstring{$\Maf (\Cc)$}{TEXT}}
Recall from \cite[Section 4]{GV-OnTheDuality} the definition of the ``Turaev'' category $\Maf (\Cc)$ as being $\Fam(\Cc^{\op })^{\mathrm{%
		op}}$. Objects in this category are the same as in $\Fam(\Cc)$, i.e. pairs $\underline{X}:=\left( X_{i}\right)
_{i\in I}=\left( I,X_{i}\right) $ where $I$ is a set and $X_{i}$ is an
object in $\Cc$ for all $i\in I$. A morphism between two objects $\underline{X}=\left(
I,X_{i}\right) $ and $\underline{Y}=\left( J,Y_{j}\right) $, however, is a pair $\underline{\phi }:=\left(
f,\phi _{j} \right):\underline{X}\rightarrow
\underline{Y}$ where $f:J\rightarrow I$ is a map and $\phi _{j
}:X_{f(j)}\rightarrow Y_{j}$ is a morphism in $\Cc$ for all $i \in I$.

If $\Cc$ is a monoidal category, so is $\Maf (\Cc)$, as
follows (see e.g. \cite[Section 4]{GV-OnTheDuality}). Given objects $\underline{X}$ and $\underline{Y}$ as above, their
tensor product is defined by $\underline{X}\otimes \underline{Y}=\left(
I\times J,X_{i}\otimes Y_{j}\right) .$ Given morphisms $\underline{\phi }%
:=\left( f,\phi _{j}\right) :\underline{X}%
\rightarrow \underline{Y}$ and $\underline{\phi }^{\prime }:=\left( f^{\prime },\phi _{j'}^{\prime }\right)
:\underline{X}^{\prime }\rightarrow \underline{Y}^{\prime }$, their tensor
product is $\underline{\phi }\otimes \underline{\phi }^{\prime }=\left(
f\times f^{\prime },\phi _{j}\otimes
\phi _{j'}^{\prime }\right) $. The unit
object of $\Maf (\Cc)$ is $\underline{\unit}=\left( \left\{ \ast \right\} ,\unit%
\right) $, where $\unit$ is the unit object $\Cc$.

The following result shows how the category $\Maf (\Cc)$ fails to be pre-rigid.

\begin{proposition}\label{pro:Maf}
Let $\Cc$ be a monoidal category. Then the category $\Maf(\Cc)$ is never pre-rigid.
\end{proposition}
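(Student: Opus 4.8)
The plan is to reduce the statement, via Lemma \ref{lem:presheaf}, to a representability question: a monoidal category is pre-rigid exactly when, for \emph{every} object $\underline X$, the presheaf $\hom_{\Maf(\Cc)}\left(-\otimes\underline X,\underline\unit\right)$ is representable. Hence, to prove that $\Maf(\Cc)$ is \emph{never} pre-rigid, it suffices to produce a single object $\underline X$ for which this presheaf fails to be representable. I would take $\underline X$ to be the empty family $\underline{0}$, i.e. the object of $\Maf(\Cc)$ whose index set is $\emptyset$ (which, incidentally, is the terminal object of $\Maf(\Cc)=\Fam(\Cc^{\op})^{\op}$, since a morphism into it requires only the empty map of index sets together with vacuous components).

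The computation is then very short. For an arbitrary object $\underline T=(K,T_k)$, the tensor product in $\Maf(\Cc)$ carries the index set $K\times\emptyset=\emptyset$, so $\underline T\otimes\underline{0}=\underline{0}$ no matter what $\underline T$ is. Next I would compute $\hom_{\Maf(\Cc)}\left(\underline{0},\underline\unit\right)$ directly from the definition of morphisms in $\Maf(\Cc)$: a morphism $\underline{0}\to\underline\unit=(\{\ast\},\unit)$ consists of a map of index sets \emph{in the reverse direction} $f:\{\ast\}\to\emptyset$, together with component morphisms in $\Cc$. Since there is no function $\{\ast\}\to\emptyset$, this hom-set is empty. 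Combining the two observations, $\hom_{\Maf(\Cc)}\left(\underline T\otimes\underline{0},\underline\unit\right)=\emptyset$ for every $\underline T$; that is, the presheaf in question is the constantly empty presheaf on $\Maf(\Cc)$.

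Finally I would note that the empty presheaf can never be representable: for any object $A$ the presheaf $\hom_{\Maf(\Cc)}(-,A)$ takes at $A$ the nonempty value $\hom_{\Maf(\Cc)}(A,A)\ni\id_A$, so it differs from the constantly empty presheaf. Therefore no object represents $\hom_{\Maf(\Cc)}\left(-\otimes\underline{0},\underline\unit\right)$, and by Lemma \ref{lem:presheaf} the category $\Maf(\Cc)$ is not pre-rigid. Since the whole argument uses nothing about $\Cc$ beyond the formal definition of $\Maf(\Cc)$, it applies verbatim to every monoidal category $\Cc$, which yields the word ``never''.

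The one genuinely delicate point — and the step most prone to error — is bookkeeping the \emph{reversed} direction of the index maps in $\Maf(\Cc)$ as compared with $\Fam(\Cc)$: it is exactly this reversal that forces $\hom_{\Maf(\Cc)}\left(\underline{0},\underline\unit\right)$ to be empty, whereas the analogous computation in $\Fam(\Cc)$ returns a \emph{singleton} (so that there the empty family carries the representable constant-singleton presheaf, consistently with Proposition \ref{pro:Famprerig}). I would therefore spell out the definition of the morphisms of $\Maf(\Cc)$ carefully just before running the computation, so that the emptiness of the crucial hom-set is transparent and the contrast with $\Fam(\Cc)$ is made explicit.
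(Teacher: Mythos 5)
Your proof is correct and is essentially the paper's own argument: both use the empty family, observe that it absorbs tensor products and admits no morphism to $\underline{\unit}$ (because of the reversed index maps), and conclude that the presheaf $\hom_{\Maf(\Cc)}\left(-\otimes\underline{0},\underline{\unit}\right)$ cannot be represented. The paper phrases the last step as a direct contradiction — instantiating at $T=\underline{0}^{\ast}$ and finding $\mathrm{Id}_{\underline{0}^{\ast}}$ in an empty hom-set — which is exactly your observation that the constantly empty presheaf is never representable.
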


\begin{proof}$\Maf (\Cc )$ has a terminal object given by the
empty family of objects $\terminal:=\left( \emptyset ,-\right) .$ Note
that, given any object $\underline{X}=\left( I,X\right) $, we have $%
\underline{X}\otimes \terminal=\left( I\times \emptyset ,-\right) \cong
\left( \emptyset ,-\right) =\terminal$. Now suppose that $\terminal$
has a pre-dual $\terminal^{\ast }$ in $\Maf (\Cc )$. Then
$
\mathrm{Id}_{\terminal^{\ast }}\in \mathrm{Hom}_{\Maf (\Cc %
)}\left( \terminal^{\ast },\terminal^{\ast }\right) \cong \mathrm{Hom%
}_{\Maf (\Cc )}\left( \terminal^{\ast }\otimes \terminal,\underline{\unit}\right) \cong \mathrm{Hom}_{\Maf (%
\Cc )}\left( \terminal,\underline{\unit}\right)
$
but there can be no morphism $\terminal\rightarrow \underline{\unit}$ as part of it would
be a map $\{*\} \rightarrow \emptyset $, a contradiction.
\end{proof}

\begin{remark}
Proposition \ref{pro:Maf} assures that $\Maf (\mathcal{\Vec})$ is not pre-rigid. Contraposition of Proposition \ref{prop:prerigid2} delivers that $\Maf (\mathcal{\Vec})$ is not even closed.
Similarly $\Maf (\mathcal{\Vec^{\textrm{f}}})$ is not pre-rigid, whence not even closed.
	\end{remark}
	
	\begin{remark}\label{TuraevZunino}
$\Vec$ can be given the obvious symmetric monoidal structure by considering the twist. It is shown in \cite[Section 2.1]{CaeDel} (resp. Section 2.2) that this induces a symmetric monoidal structure on the Turaev category $\Maf(\Vec)$ (resp. Zunino category $\Fam(\Vec)$). \cite[Proposition 2.5]{CaeDel} (resp. Proposition 2.10) asserts that Hopf group-coalgebras (resp. Hopf group-algebras), intruduced in \cite{Tu}, are precisely Hopf algebra objects in $\Maf(\Vec)$ (resp. $\Fam(\Vec)$) endowed with this symmetry. Notice that the definition of a Hopf group-coalgebra is not self-dual. The above results obtained so far in this section show that, with regard to pre-rigidity and closedness, the categorical places where Hopf group-coalgebras and Hopf group-algebras live behave differently as well.
\end{remark}

\subsection{The variant of the category of families \texorpdfstring{$\Faf(\Cc)$}{TEXT}}
We now turn to the study of the category $\Faf(\Cc)$ where the fact that pre-rigidity is inherited from $\Cc$ is restored, contrary to $\Maf(\Cc)$ above.

\begin{definition}[The category $\Faf(\Cc)$]
An object in $\Faf(\Cc)$, see \cite{LMM}, is a pair $\underline{X}:=\left( X_{i}\right)
_{i\in I}=\left( I,X_{i}\right) $ where $I$ is a set and $X_{i}$ is an
object in $\Cc.$ Given two objects $\underline{X}=\left(
I,X_{i}\right) $ and $\underline{Y}=\left( J,Y_{j}\right) $, a morphism $%
\underline{X}\rightarrow \underline{Y}$ is a set of triples $(i,j,f)$ where $i\in I,j\in J$ nd $f:X_i\to Y_j$ is a morphism in $\Cc$.
We can reorganize such a set of triples into a pair $\underline{\phi }:=\left(
\Rr ,\phi _{\left( i,j\right) }\right) :\underline{X}\rightarrow
\underline{Y}$ where $\Rr :I\relto J$ is a binary relation (see Example \ref{ex:Rel}), i.e.
a subset $\Rr \subseteq I\times J,$ while $\phi _{\left( i,j\right)
}\subseteq \hom _\Cc (X_{i}, Y_{j})$ for every $\left(
i,j\right) \in \Rr $. Given two morphisms $\underline{\phi }:=\left(
\Rr ,\phi _{\left( i,j\right) }\right) :(I,X_i)\rightarrow
(J,Y_j)$ and $\underline{\psi }:=\left(
\Ss ,\psi _{\left( j,k\right) }\right) :(J,Y_j)\rightarrow
(K,Z_k)$ their composition is defined to be $\underline{\psi}\circ \underline{\phi }:=\left(
\Ss\circ\Rr ,\bigcup_{j\in J_{(i,k)}}\psi _{(j,k)}\circ\phi _{(i,j)}\right)$, where $J_{(i,k)}:=\{j\in J\mid (i,j)\in\Rr, (j,k)\in\Ss \}$ and $\psi _{(j,k)}\circ\phi _{(i,j)}:=\{f\circ g\mid f\in\psi _{(j,k)},g\in\phi _{(i,j)}\}$.
The identity morphism is $(\id_I,\{\id_{X_i}\}):(I,X_i)\to (I,X_i)$.
\begin{invisible}
  The associativity of the composition $(I,X_i)\overset{(\Rr,\phi_{(i,j)})}\to (J,Y_j)\overset{(\Ss,\psi_{(j,k)})}\to(K,Z_k)\overset{(\mathcal{T},\lambda_{(k,t)})}\to (T,W_t)$ follows from
\begin{align*}
(\lambda_{(k,t)})_{(k,t)\in \mathcal{T}}\circ((\psi_{(j,k)})_{(j,k)\in \Ss}\circ (\phi_{(i,j)})_{(i,j)\in \Rr})
  &=(\lambda_{(k,t)})_{(k,t)\in \mathcal{T}}\circ(\bigcup_{j\in J_{(i,k)}}\psi _{(j,k)}\circ\phi _{(i,j)})_{(i,k)\in \Ss\circ\Rr}\\
   &=(\bigcup_{k\in K_{(i,t)}}\lambda_{(k,t)}\circ\bigcup_{j\in J_{(i,k)}}\psi _{(j,k)}\circ\phi _{(i,j)})_{(i,t)\in \mathcal{T}\circ\Ss\circ\Rr}\\
   &=(\bigcup_{k\in K_{(i,t)}}\bigcup_{j\in J_{(i,k)}}\lambda_{(k,t)}\circ\psi _{(j,k)}\circ\phi _{(i,j)})_{(i,t)\in \mathcal{T}\circ\Ss\circ\Rr}\\
   &=(\bigcup_{j\in J_{(i,t)}}\bigcup_{k\in K_{(j,t)}}\lambda_{(k,t)}\circ\psi _{(j,k)}\circ\phi _{(i,j)})_{(i,t)\in \mathcal{T}\circ\Ss\circ\Rr}\\
   &=(\bigcup_{k\in K_{(j,t)}}\lambda_{(k,t)}\circ\psi _{(j,k)})_{(j,t)\in\mathcal{T}\circ\Ss}\circ(\phi_{(i,j)})_{(i,j)\in \Rr}\\
   &=((\lambda_{(k,t)})_{(k,t)\in \mathcal{T}}\circ(\psi_{(j,k)})_{(j,k)\in \Ss})\circ (\phi_{(i,j)})_{(i,j)\in \Rr}.
\end{align*}
\end{invisible}

If $(\Cc,\otimes,\unit)$ is a monoidal category  so is $\Faf(\Cc)$ as
follows. Given objects $\underline{X}$ and $\underline{Y}$ as above, their
tensor product is defined by $\underline{X}\otimes \underline{Y}=\left(
I\times J,X_{i}\otimes Y_{j}\right) .$ Given morphisms $\underline{\phi }%
:=\left( \Rr ,\phi _{\left( i,j\right) }\right) :\underline{X}%
\rightarrow \underline{Y}$ and $\underline{\phi }^{\prime }:=\left( \mathcal{%
R}^{\prime },\phi _{\left( i^{\prime },j^{\prime }\right) }^{\prime }\right)
:\underline{X}^{\prime }\rightarrow \underline{Y}^{\prime }$, their tensor
product is $\underline{\phi }\otimes \underline{\phi }^{\prime }=\left(
\Rr \times \Rr ^{\prime },\phi _{\left( i,j\right) }\otimes
\phi _{\left( i^{\prime },j^{\prime }\right) }^{\prime }\right) $ where $%
\Rr \times \Rr ^{\prime }:=\left\{ \left( \left( i,i^{\prime
}\right) ,\left( j,j^{\prime }\right) \right) \mid \left( i,j\right) \in
\Rr \text{ and }\left( i^{\prime },j^{\prime }\right) \in \Rr %
^{\prime }\right\} $ is the cartesian product of binary relations and $\phi _{\left( i,j\right) }\otimes
\phi _{\left( i^{\prime },j^{\prime }\right) }^{\prime }:=\{f\otimes f'\mid f\in\phi _{\left( i,j\right) },f'\in
\phi _{\left( i^{\prime },j^{\prime }\right) }^{\prime }\}$. The unit
object is $\underline{\unit}=\left( \left\{ \ast \right\} ,\unit%
\right) .$
\end{definition}

\begin{remark}
 As observed in \cite{LMM}, if $\terminal$ denotes the terminal category, then $\Faf(\terminal)$ identifies with the category $\Rel$.
\end{remark}

\begin{proposition}\label{pro:Faf}
If $\Cc$ is a pre-rigid monoidal category, so is $\Faf(\Cc)$.
\end{proposition}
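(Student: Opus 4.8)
The plan is to define the pre-dual of $\underline{Y}=(J,Y_j)$ by $\underline{Y}^*:=(J,Y_j^*)$, keeping the \emph{same} index set $J$ and replacing each object by its pre-dual in $\Cc$; this way $\Faf(\Cc)$ should behave like $\Rel$ on the index sets and like pre-rigidity of $\Cc$ on the objects. As evaluation I would take $\ev_{\underline{Y}}:=\left(\Delta_J,\{\ev_{Y_j}\}\right):\underline{Y}^*\ot\underline{Y}\to\underline{\unit}$, where $\Delta_J=\{((j,j),*)\mid j\in J\}\subseteq(J\times J)\times\{*\}$ is the ``diagonal'' relation and the single $\Cc$-morphism attached to $((j,j),*)$ is $\ev_{Y_j}:Y_j^*\ot Y_j\to\unit$. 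This mimics exactly the compact-closed evaluation of $\Rel$ recorded in Example \ref{ex:Rel}.

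The first observation is that, for any $\underline{T}=(I,T_i)$, a morphism in $\Faf(\Cc)$ is literally a set of triples, so assigning to each pair $(i,j)\in I\times J$ the subset of $\Cc$-morphisms it selects yields
\[
\hom_{\Faf(\Cc)}(\underline{T},\underline{Y}^*)\cong\prod_{(i,j)\in I\times J}\mathcal{P}\bigl(\hom_\Cc(T_i,Y_j^*)\bigr)
\]
and, since $\underline{T}\ot\underline{Y}=(I\times J,T_i\ot Y_j)$,
\[
\hom_{\Faf(\Cc)}(\underline{T}\ot\underline{Y},\underline{\unit})\cong\prod_{(i,j)\in I\times J}\mathcal{P}\bigl(\hom_\Cc(T_i\ot Y_j,\unit)\bigr),
\]
where $\mathcal{P}$ denotes the power set. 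Pre-rigidity of $\Cc$ supplies, for each $(i,j)$, the bijection $\hom_\Cc(T_i,Y_j^*)\to\hom_\Cc(T_i\ot Y_j,\unit)$, $g\mapsto\ev_{Y_j}\circ(g\ot Y_j)$; as a bijection of sets induces a bijection of power sets, taking the product over all $(i,j)$ already yields a bijection between the two displayed hom-sets.

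The crux is then to verify that this abstract bijection coincides with the pre-rigidity map $u\mapsto\ev_{\underline{Y}}\circ(u\ot\underline{Y})$, which I would do by directly computing the composite in $\Faf(\Cc)$. Writing $u=(\Rr,u_{(i,j)})$ with $\Rr\subseteq I\times J$, one first tensors with $\id_{\underline{Y}}=(\Delta_J,\{\id_{Y_j}\})$: by the formula for the cartesian product of relations, $u\ot\underline{Y}$ carries $((i,c),(a,c))$ precisely when $(i,a)\in\Rr$, with attached $\Cc$-morphisms $\{g\ot\id_{Y_c}\mid g\in u_{(i,a)}\}$. Composing with $\ev_{\underline{Y}}$, the relational composition law forces any intermediate index $(a,b)$ to satisfy both $a=b$ (from $\Delta_J$) and $a=c$ (from $u\ot\underline{Y}$), so the union $\bigcup$ appearing in the composition in $\Faf(\Cc)$ collapses to the single term $(a,b)=(c,c)$. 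One thus obtains the relation $\Rr$ (viewed as $I\times J\relto\{*\}$) together with the attached sets $\{\ev_{Y_c}\circ(g\ot Y_c)\mid g\in u_{(i,c)}\}$, which is exactly the elementwise image of $u$ under the bijections above.

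I expect the main obstacle to be precisely this composition computation: one must handle with care the cartesian product of binary relations and the union-of-composites defining composition in $\Faf(\Cc)$, checking that the diagonal shape of $\ev_{\underline{Y}}$ singles out the matching index $a=b=c$ and thereby reduces the union to one summand. Once this identification is in place, bijectivity of the pre-rigidity map for every $\underline{T}$ follows from pre-rigidity of $\Cc$, and the definition of pre-rigidity (equivalently Lemma \ref{lem:presheaf}) gives that $\Faf(\Cc)$ is pre-rigid.
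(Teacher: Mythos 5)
Your proposal is correct and follows essentially the same route as the paper's proof: the same pre-dual $\underline{Y}^*=(J,Y_j^*)$, the same diagonal-shaped evaluation $(\ev_J,\{\ev_{Y_j}\})$, and the same verification that composing with it in $\Faf(\Cc)$ reproduces the componentwise bijections supplied by pre-rigidity of $\Cc$. The only cosmetic difference is that you obtain bijectivity by identifying the hom-sets with products of power sets, whereas the paper writes down the explicit inverse via $(-)^\dag$; these amount to the same thing.
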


\begin{proof}
  Given an object $\underline{Y}=\left(
J,Y_{j}\right) \in \Faf(\Cc)$ we set $\underline{Y}^{\ast }:=\left( J,Y_{j}^{\ast
}\right) .$ Consider now the map%
\begin{eqnarray*}
\hom _{\Faf(\Cc)}\left( \underline{X},\underline{Y}%
^{\ast }\right) &\rightarrow &\hom _{\Faf(\Cc)}\left(
\underline{X}\otimes \underline{Y},\underline{\unit}\right) \\
\left( \Rr :I\relto J,\phi _{\left( i,j\right)
}\subseteq\hom _\Cc(X_{i}, Y_{j}^{\ast })\right) &\mapsto &\left( \ev%
_{J}\circ \left( \Rr \times \mathrm{Id}_{J}\right) ,\{\ev_{Y_{j}}\}\circ \left( \phi _{\left( i,j\right) }\otimes \{\mathrm{Id}%
_{Y_{j}}\}\right) \right)\subseteq \hom _\Cc(X_i\otimes Y_j,\unit).
\end{eqnarray*}%
where $\ev_{J}\circ \left( \Rr \times \mathrm{Id}_{J}\right) $
is the binary relation considered in Example \ref{ex:Rel}. Its inverse is
given by $\left( \Rr ,\phi _{\left( \left( i,j\right) ,\ast \right)
}\right) \rightarrow \left( \Rr^\dag,(\phi _{\left(
\left( i,j\right) ,\ast \right) })^\dag\right) $ where for every binary relation $%
\Rr :I\times J\relto \left\{ \ast \right\}$ we define $%
\Rr^\dag:I\relto J$ as in Example \ref{ex:Rel} while for
every $\phi _{\left( \left( i,j\right) ,\ast \right) }\subseteq \hom _\Cc(X_{i}\otimes
Y_{j},\unit)$, we set $(\phi _{\left(
\left( i,j\right) ,\ast \right) })^\dag:=\{f^\dag\mid f\in \phi _{\left( \left( i,j\right) ,\ast \right) }\}\subseteq\hom (X_{i},Y_{j}^{\ast })$ and for every morphism $f:X_{i}\otimes
Y_{j}\rightarrow \unit$ the morphism $f^\dag:X_{i}\rightarrow Y_{j}^{\ast }$ is the unique
morphism in $\Cc$ defined by $\ev_{Y_{j}}\circ \left(
f^\dag\otimes
Y_{j}\right) =f$ given by
the pre-rigid category of $\Cc$.

Define $\ev_{\underline{Y}}:\underline{Y}^{\ast }\otimes \underline{Y}%
\rightarrow \underline{\unit}$ by setting $\ev_{\underline{Y}%
}:=\left( \ev_{J},\{\ev_{Y_{j}}\}\right) .$ We compute%
\begin{eqnarray*}
\ev_{\underline{Y}}\circ \left( \left( \Rr ,\phi _{\left(
i,j\right) }\right) \otimes \mathrm{Id}_{\underline{Y}}\right) &=&\left(
\ev_{J},\{\ev_{Y_{j}}\}\right) \circ \left( \left( \Rr %
,\phi _{\left( i,j\right) }\right) \otimes \left( \mathrm{Id}_{J},\{\mathrm{Id}%
_{Y_{j}}\}\right) \right) \\
&=&\left( \ev_{J},\{\ev_{Y_{j}}\}\right) \circ \left( \Rr %
\times \mathrm{Id}_{J},\phi _{\left( i,j\right) }\otimes \{\mathrm{Id}%
_{Y_{j}}\}\right) \\
&=&\left( \ev_{J}\circ \left( \Rr \times \mathrm{Id}%
_{J}\right) ,\{\ev_{Y_{j}}\}\circ \left( \phi _{\left( i,j\right)
}\otimes \{\mathrm{Id}%
_{Y_{j}}\}\right) \right) .
\end{eqnarray*}%
Thus the bijection above is exactly $\ev_{\underline{Y}}\circ \left(
\left( \Rr ,\phi _{\left( i,j\right) }\right) \otimes \mathrm{Id}_{%
\underline{Y}}\right) $ and hence $\Faf(\Cc)$ is pre-rigid.
\end{proof}

\begin{remark}
  Note that $\Fam(\Cc)$ is a subcategory of $\Faf(%
\Cc)$ via the strong monoidal embedding (identity-on-object faithful functor)%
\begin{equation*}
\Fam(\Cc)\rightarrow \Faf(\Cc):\quad\left(
I,X_{i}\right) \mapsto \left( I,X_{i}\right) ,\quad\left( f,\phi _{i}\right)
\mapsto \left( f,\{\phi _{i}\}\right) .
\end{equation*}%
This induces a functor from the Turaev category
\begin{equation*}
\Maf (\Cc):=\Fam(\Cc^{\op })^{\mathrm{%
op}}\rightarrow \Faf(\Cc^{\op })^{\op }.
\end{equation*}%
Note also that we have a strong monoidal embedding
\begin{equation*}
\Maf (\Cc)\rightarrow \Faf(\Cc):\left(
I,X_{i}\right) \mapsto \left( I,X_{i}\right), \left( f:I\rightarrow J,\phi
_{i}:X_{f\left( i\right) }\rightarrow Y_{i}\right) \mapsto \left( f^{\sharp
}:J\rightarrow I,\{\phi _{i}\}\subseteq\hom _\Cc(X_{f\left( i\right) }, Y_{i})\right)
\end{equation*}%
where $\Rr ^{\sharp }=\left\{ \left( j,i\right) \mid \left( i,j\right)
\in \Rr \right\} :J\relto I$ is the converse relation of a
binary relation $\Rr :I\relto J$.
\end{remark}

Summing up, for a monoidal category $\Cc$,  both the monoidal categories $\Fam(\Cc)$ and $\Maf(\Cc)$ embeds in $\Faf(\Cc)$.
Moreover $\Faf(\Cc)$ inherits the pre-rigidity from $\Cc$ with no further assumption, $\Fam(\Cc)$ inherits the pre-rigidity if $\Cc$ has products of pre-duals while $\Maf(\Cc)$ does not.

\subsection{The functor category \texorpdfstring{$\left[ \mathcal{I},\Cc\right] $}{TEXT}}
Given a small category $\mathcal{I}$ and a complete closed monoidal category
$\Cc$, it is well-known that the functor category $\left[ \mathcal{I},\Cc\right] $ is closed as well, see e.g. \cite[Theorem B.14]%
{Aguiar-Mahajan-Bimonoids}.  Explicitly, given functors $F,G:\mathcal{I}\to \Cc$, for any object $I$ in $\mathcal{I}$, $[F,G](I)$ is defined to be the universal object in $\Cc$ with the following property: For any morphism $f:I\to X$ in $\mathcal{I}$ there is a morphism $\eta_f:[F,G](I)\to [F(X),G(X)]$ in $\Cc$ such that for any $g:X\to Y$ in $\mathcal{I}$ the following diagram commutes.
\begin{equation*}
\begin{aligned}
\xymatrixcolsep{2cm}\xymatrix{[F,G](I)\ar[r]^{\eta_f}\ar[d]_-{\eta_{g\circ f }}&[F(X),G(X)]\ar[d]^-{[F(X),G(g)]} \\
[F(Y),G(Y)] \ar[r]^{[F(g),G(X)]} & [F(X),G(Y))]
}
\end{aligned}
\end{equation*} It has also the following description as an end of a functor, see e.g. \cite[(B.22)]%
{Aguiar-Mahajan-Bimonoids}.
\begin{equation}\label{endhomfuncat}
[F,G](I)=\int_{J\in \mathcal{I}}\prod_{\hom _{\mathcal{I}%
}(I,J)}[F(J),G(J)].
\end{equation}

 Our next aim is to show that a similar result
holds in case $\Cc$ is just pre-rigid.

\begin{proposition}\label{pro:catfun}
Let $\mathcal{I}$ be a small category and let $\Cc$ be a complete
monoidal category. If $\Cc$ is pre-rigid, so is the functor category $%
\left[ \mathcal{I},\Cc\right] .$
\end{proposition}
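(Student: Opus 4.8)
The plan is to invoke Lemma \ref{lem:presheaf}: it suffices to show that for every functor $G\in[\mathcal{I},\Cc]$ the presheaf $\hom_{[\mathcal{I},\Cc]}(-\otimes G,K_\unit)$ is representable, where $[\mathcal{I},\Cc]$ carries the pointwise monoidal structure $(F\otimes G)(I)=F(I)\otimes G(I)$ and $K_\unit$ denotes the constant functor at $\unit$. I will use throughout that natural transformations are computed as an end, $\hom_{[\mathcal{I},\Cc]}(F,G)=\int_{I}\hom_\Cc(F(I),G(I))$, and that completeness of $\Cc$ makes all ends and products below exist and be formed pointwise. The candidate pre-dual is obtained by adapting the closed-case formula \eqref{endhomfuncat} to the constant target $K_\unit$, namely
$$G^{\ast}(I):=\int_{J\in\mathcal{I}}\prod_{\hom_{\mathcal{I}}(I,J)}G(J)^{\ast},$$
where $G(J)^{\ast}$ is the pre-dual in $\Cc$. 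The crucial point making this legitimate in the merely pre-rigid setting is that the internal hom into the unit, $[G(J),\unit]$, coincides with the pre-dual $G(J)^{\ast}$, which exists by pre-rigidity of $\Cc$; thus, unlike the general internal hom $[F,G]$, this particular end needs only pre-duals, products and completeness.

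First I would check that $G^{\ast}$ really is a \emph{covariant} functor $\mathcal{I}\to\Cc$: although the pointwise assignment $I\mapsto G(I)^{\ast}$ is contravariant (it is $(-)^{\ast}\circ G^{\op}$, cf. Lemma \ref{lem:contravariant}), the power $\prod_{\hom_{\mathcal{I}}(I,J)}G(J)^{\ast}$ depends contravariantly on the index $\hom_{\mathcal{I}}(I,J)$, and this reindexing restores covariance in $I$. Then I would establish the representing bijection, natural in $F$, via
$$\hom_{[\mathcal{I},\Cc]}(F,G^{\ast})=\int_{I}\hom_\Cc\Big(F(I),\int_{J}\prod_{\hom_{\mathcal{I}}(I,J)}G(J)^{\ast}\Big)\cong\int_{I}\int_{J}\prod_{\hom_{\mathcal{I}}(I,J)}\hom_\Cc(F(I),G(J)^{\ast}),$$
using that $\hom_\Cc(F(I),-)$ preserves the end and the products. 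Applying pre-rigidity of $\Cc$ replaces $\hom_\Cc(F(I),G(J)^{\ast})$ by $\hom_\Cc(F(I)\otimes G(J),\unit)$; this substitution is natural in both $F(I)$ and $G(J)$ by the naturality of \eqref{maprerig} recorded in Lemma \ref{lem:contravariant}, hence compatible with both ends. Finally I would collapse the double end by integrating over $I$ first: since $I\mapsto\hom_\Cc(F(I)\otimes G(J),\unit)$ is contravariant, the end form of the Yoneda lemma gives, for fixed $J$, the isomorphism $\int_{I}\prod_{\hom_{\mathcal{I}}(I,J)}\hom_\Cc(F(I)\otimes G(J),\unit)\cong\hom_\Cc(F(J)\otimes G(J),\unit)$; integrating the result over $J$ yields $\int_{J}\hom_\Cc(F(J)\otimes G(J),\unit)=\hom_{[\mathcal{I},\Cc]}(F\otimes G,K_\unit)$. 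By Lemma \ref{lem:presheaf} this proves pre-rigidity, the evaluation being recovered as the image of $\id_{G^{\ast}}$ under the bijection.

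The main obstacle is the variance bookkeeping together with the correct order of the end-calculus collapse. The naive pointwise dual is contravariant, so it is not itself an object of $[\mathcal{I},\Cc]$; the end defining $G^{\ast}$ is precisely the device converting it into a covariant functor, and one must verify both that it exists (completeness) and that it is functorial in $I$. In the final reduction it is essential to integrate over $I$ before $J$: the opposite order does not match the Yoneda form, because $J\mapsto\hom_\Cc(F(I)\otimes G(J),\unit)$ has the wrong variance relative to the index $\hom_{\mathcal{I}}(I,J)$. Apart from these points, the only other place requiring genuine care is checking that the pre-rigidity isomorphism is natural in \emph{both} arguments, so that it is compatible with the two ends; the remaining manipulations are the standard Fubini and hom-continuity properties of ends.
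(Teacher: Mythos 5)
Your argument is correct, but it is not the route the paper's official proof takes: it is, almost verbatim, the end-calculus argument that the authors relegate to the remark immediately following Proposition \ref{pro:catfun} (there attributed to an adaptation of Shulman's MathOverflow comment). Your pre-dual $G^{\ast}(I)=\int_{J}\prod_{\hom_{\mathcal{I}}(I,J)}G(J)^{\ast}$ is exactly the object the paper writes as the limit $\underleftarrow{\lim}_{y}\prod_{\hom_{\mathcal{I}}(x,y)}F(y)^{\ast}$ and then re-identifies as an end, and your chain (hom-continuity, pre-rigidity of $\Cc$ applied under the double end, Fubini, and the end form of the Yoneda lemma $\int_{I}\prod_{\hom_{\mathcal{I}}(I,J)}H(I)\cong H(J)$ for $H$ contravariant) is the paper's remark read in the opposite direction, starting from $\hom(F,G^{\ast})$ instead of $\hom(F\otimes G,K_{\unit})$. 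The paper's actual proof avoids ends entirely: it builds the functor $S(x,y)=\prod_{\hom_{\mathcal{I}}(x,y)}F(y)^{\ast}$, defines $F^{\ast}(x)$ as an ordinary limit, and exhibits the two mutually inverse maps $\Phi$ and $\Psi$ explicitly, verifying by hand the cone conditions, the naturality in $x$, and the construction of the evaluation $(\ev_{F})_{x}=\ev_{F(x)}\circ(p_{\mathrm{Id}}q^{x}_{x}\otimes F(x))$. What the explicit proof buys is a concrete description of the evaluation and of $t^{\dag}$ usable in later computations, and accessibility to readers not fluent in coend calculus; what your approach buys is brevity and a transparent comparison with the closed case via \eqref{endhomfuncat}. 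Your variance bookkeeping (covariance of $G^{\ast}$ restored by the contravariance of the power in its index set, and the need to collapse the $I$-end before the $J$-end) is sound, and the only items you invoke without proof --- the natural-transformations-as-ends formula, Fubini, hom-continuity, and the Yoneda end identity --- are exactly the citations the paper's remark supplies, so there is no genuine gap.
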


\begin{proof}
By e.g. \cite[Exercice 4, page 165]{MacLane}, we know that $\left[ \mathcal{I},%
\Cc\right] $ is monoidal where, for any functors $T,F:\mathcal{I}%
\rightarrow \Cc$, we have $\left( T\otimes F\right) \left( x\right)
=T\left( x\right) \otimes F\left( x\right) $ and the unit object of $%
\Cc^{\mathcal{I}}$ is the constant functor $\unit^{\prime }:%
\mathcal{I}\rightarrow \Cc$ on the unit object $\unit\in
\Cc$.
 Consider a functor $F:\mathcal{I}\rightarrow \Cc$. Define $%
S\left( x,y\right) :=\prod_{\hom _{\mathcal{I}}(x,y)}F(y)^{\ast }$
and denote by $p_{g}:S\left( x,y\right) \rightarrow F(y)^{\ast }$ the
canonical projection for every $g\in \hom _{\mathcal{I}}(x,y)$. Given
morphisms $u :x_{1}\rightarrow x_{2}$ and $v :y_{2}\rightarrow
y_{1}, $ there is a unique morphism $S\left( u,v\right) :S\left(
x_1,y_{1}\right) \rightarrow S\left( x_2,y_{2}\right) $ such that the following
diagram commutes for every $g:x_{2}\rightarrow y_{2}$
\begin{equation}\label{def:Suv}
\begin{aligned}
\xymatrixcolsep{1.5cm}\xymatrix{S\left( x_{1},y_{1}\right)\ar@{.>}[r]^{S\left( u ,v \right)}\ar[d]_-{p_{v \circ g\circ u }}&S\left( x_{2},y_{2}\right)\ar[d]^-{p_{g}} \\
F(y_{1})^{\ast } \ar[r]^{F(v )^{\ast }} & F(y_{2})^{\ast }
}
\end{aligned}
\end{equation}%
In this way we have defined a functor $S:\mathcal{I}\times \mathcal{I}^{^{%
\op }}\rightarrow \Cc:\left( x,y^\op \right)
\mapsto S\left( x,y\right) .$

Denote by $F^{\ast }(x):=\underleftarrow{\lim }_{y\in \mathcal{I}}\prod_{%
\hom _{\mathcal{I}}(x,y)}F(y)^{\ast }:=\underleftarrow{\lim }S\left(
x,-\right) $ i.e. the limit of the functor $S\left( x,-\right) :\mathcal{I}%
^\op \rightarrow \Cc:y^\op \mapsto S\left(
x,y\right) =\prod_{\hom _{\mathcal{I}}(x,y)}F(y)^{\ast }.$ Given $%
u :x_{1}\rightarrow x_{2}$ in $\mathcal{I}$ we set $F^{\ast }(u):=%
\underleftarrow{\lim }S\left(u,-\right) :\underleftarrow{\lim }%
S\left( x_{1},-\right) \rightarrow \underleftarrow{\lim }S\left(
x_{2},-\right) .$ This defines a functor $F^{\ast }=\underleftarrow{\lim }%
_{y\in \mathcal{I}}\prod_{\hom _{\mathcal{I}}(-,y)}F(y)^{\ast }.$ Let
us check that $F^{\ast }$ is a pre-dual of and construct explicitly an
isomorphism
\begin{equation*}
\mathrm{Nat}(T\otimes F,\unit^{\prime })\cong \mathrm{Nat}\left(
T,F^{\ast }\right)
\end{equation*}%
for any functors $T,F:\mathcal{I}\rightarrow \Cc$.

Given $\alpha :T\otimes F\rightarrow \unit^{\prime }$, its components
are morphisms $\alpha _{y}:T(y)\otimes F(y)\rightarrow \unit$ with $%
y\in \mathcal{I}.$ Since $\Cc$ is pre-rigid we can consider $%
(\alpha _{y})^\dag:T(y)\rightarrow F(y)^{\ast }\ $where $F(y)^{\ast }$
denotes the pre-dual of $F(y).$ For every $x\in \mathcal{I}$, there is a
unique morphism $\alpha _{y}^{x}:T(x)\rightarrow S\left( x,y\right) $ such
that, for every $g:x\rightarrow y$ in $\mathcal{I}$, we have
\begin{equation}\label{def:alphaxy}
\begin{aligned}
\xymatrixcolsep{1.5cm}\xymatrix{T\left( x\right)\ar@{.>}[r]^{\alpha _{y}^{x}}\ar[d]_-{T\left( g\right)}&S\left(
x,y\right)\ar[d]^-{p_{g}} \\
T\left( y\right) \ar[r]^{(\alpha _{y})^\dag} & F(y)^{\ast }
}
\end{aligned}
\end{equation}%
Given morphisms $u :x_{1}\rightarrow x_{2}$ and $v :y_{2}\rightarrow
y_{1},$ for every $g:x_{2}\rightarrow y_{2}$ we have
\begin{eqnarray*}
p_{g}\circ S\left( u ,v \right) \circ \alpha _{y_{1}}^{x_{1}}
&\overset{\eqref{def:Suv}}=&F(v )^{\ast }\circ p_{v \circ g\circ u }\circ \alpha
_{y_{1}}^{x_{1}}\overset{\eqref{def:alphaxy}}=F(v )^{\ast }\circ (\alpha _{y_{1}})^\dag\circ
T\left( v\circ g\circ u \right) \\
&=&F(v )^{\ast }\circ (\alpha _{y_{1}})^\dag\circ T\left( v
\right) \circ T\left( g\right) \circ T\left( u \right) \overset{\left( %
\ref{form:alphahat}\right) }{=}(\alpha _{y_{2}})^\dag\circ T\left(
g\right) \circ T\left( u \right)
\overset{\eqref{def:alphaxy}}=p_{g}\circ \alpha _{y_{2}}^{x_{2}}\circ T\left( u \right)
\end{eqnarray*}%
where we applied the following formula
\begin{equation}
F(v )^{\ast }\circ (\alpha _{y_{1}})^\dag\circ T\left(v \right) =%
(\alpha _{y_{2}})^\dag,\text{ for every }v :y_{2}\rightarrow y_{1},  \label{form:alphahat}
\end{equation}%
that can be proved as follows. The naturality of $\alpha $ tells $\alpha
_{y_{1}}\circ \left( T\left( v \right) \otimes F\left( v \right)
\right) =\alpha _{y_{2}}$ so that%
\begin{align*}
\ev_{F(y_{2})}&\circ \left( F(v )^{\ast }\otimes
F(y_{2})\right) \circ \left( (\alpha _{y_{1}})^\dag\otimes
F(y_{2})\right) \circ \left( T\left( v \right) \otimes F(y_{2})\right) \\
&\overset{\left( \ref{form:dual}\right) }{=}\ev_{F(y_{1})}\circ
\left( F(y_{1})^{\ast }\otimes F(v )\right) \circ \left( (\alpha
_{y_{1}})^\dag\otimes F(y_{2})\right) \circ \left( T\left( v \right) \otimes
F(y_{2})\right) \\
&=\ev_{F(y_{1})}\circ \left( (\alpha _{y_{1}})^\dag\otimes
F(y_{1})\right) \circ \left( T(y_{1})\otimes F(v )\right) \circ \left(
T\left( v \right) \otimes F(y_{2})\right) =\alpha _{y_{1}}\circ \left( T\left( v \right) \otimes F\left( v
\right) \right) \overset{\text{nat.}\alpha}=\alpha _{y_{2}}
\end{align*}%
which means that $F(v)^{\ast }\circ (\alpha _{y_{1}})^\dag\circ T\left( v
\right) =(\alpha _{y_{2}})^\dag.$ Thus $p_{g}\circ S\left( u ,v
\right) \circ \alpha ^{ x_{1}} _{y_{1}}=p_{g}\circ \alpha ^{
x_{2}} _{y_{2}}\circ T\left( u \right) $ and hence
\begin{equation}
S\left( u ,v \right) \circ \alpha _{y_{1}}^{x_{1}}=\alpha
_{y_{2}}^{x_{2}}\circ T\left( u\right) ,\text{ for every }u
:x_{1}\rightarrow x_{2}\text{ and }v :y_{2}\rightarrow y_{1}.
\label{form:S}
\end{equation}%
In particular, taking $u =1_{x},$ we obtain $S\left( x,v
\right) \circ \alpha _{y_{1}}^{x}=\alpha _{y_{2}}^{x}$ for all $v
:y_{2}\rightarrow y_{1}.$ This means that $\left( T(x),\alpha
_{y}^{x}:T(x)\rightarrow S\left( x,y\right) \right) _{y\in \mathcal{I}}$ is
a cone for the functor $S\left( x,-\right) :\mathcal{I}^{^{\op %
}}\rightarrow \Cc:y^\op \mapsto S\left( x,y\right) $ and
hence it defines a unique morphism $(\alpha^\dag )_{x}:T(x)\rightarrow
F^{\ast }(x):=\underleftarrow{\lim }_{y\in \mathcal{I}}S\left( x,y\right) $
such that $q^x_{y}\circ (\alpha^\dag)_{x}=\alpha _{y}^{x},$ where $%
q^x_{y}:F^{\ast }(x)\rightarrow S\left( x,y\right) $ is the canonical map
defining the limit.
\begin{equation}\label{def:alphahatx}
\begin{aligned}
\xymatrixcolsep{1.5cm}\xymatrix{T\left( x\right)\ar@{.>}[rd]_{(\alpha^\dag )_{x}}\ar[rr]^{\alpha _{y}^{x}}&&S\left(
x,y\right) \\
&F^*(x)\ar[ru]_{q^x_{y}}
}
\end{aligned}
\end{equation}%

Let us check it is natural in $x.$ Given $u :x_1\rightarrow x_2$
 in $\mathcal{I}$,  we have
\begin{equation*}
q^{x_2}_{y}\circ F^{\ast }(u)\circ(\alpha^\dag)_{x_1}\overset{\text{def.} F^*}=S\left(u
,y\right) \circ q^{x_1}_{y}\circ  (\alpha^\dag) _{x_1}\overset{\eqref{def:alphahatx}}=S\left(
u ,y\right) \circ \alpha _{y}^{x_1}\overset{\eqref{form:S}}=\alpha _{y}^{x_2}\circ
T(u )\overset{\eqref{def:alphahatx}}=q^{x_2}_{y}\circ (\alpha^\dag) _{x_2}\circ
T(u ).
\end{equation*}%
Thus $F^{\ast }(u )\circ (\alpha^\dag)_{x_1}=(\alpha^\dag)%
_{x_2}\circ T(u)$ i.e. $(\alpha^\dag)_{x}$ is natural in $%
x $ and it defines $\alpha^\dag:T\rightarrow F^{\ast }.$ This way
we get
\begin{equation*}
\Phi :\mathrm{Nat}(T\otimes F,\unit^{\prime })\rightarrow \mathrm{Nat}%
\left( T,F^{\ast }\right) :\alpha \rightarrow \alpha^\dag.
\end{equation*}%
We have to check it is invertible and that its inverse is the one arising
from evaluation. Let us first define this evaluation.

We have to construct a natural transformation $\ev_{F}:F^{\ast
}\otimes F\rightarrow \unit^{\prime }.$ We define it on the component $%
x $ as follows%
\begin{equation*}
F^{\ast }(x)\otimes F(x)\overset{q^{x}_{x}\otimes F(x)}{\longrightarrow }S\left(
x,x\right) \otimes F\left( x\right) \overset{p_{\mathrm{Id}}\otimes F(x)}{%
\longrightarrow }F\left( x\right) ^{\ast }\otimes F\left( x\right) \overset{%
\ev_{F\left( x\right) }}{\longrightarrow }\unit
\end{equation*}%
so that $\left( \ev_{F}\right) _{x}:=\ev_{F\left( x\right)
}\circ \left( p_{\mathrm{Id}}q^{x}_{x}\otimes F(x)\right) .$ The naturality
follows from the following computation performed for every $f:x\rightarrow y$%
.
\begin{align*}
\left( \ev_{F}\right) _{y}\circ &\left( F^{\ast }\otimes F\right)
\left( f\right) =\ev_{F\left( y\right) }\circ \left( p_{\mathrm{Id}%
}q^{y}_{y}\otimes F(y)\right) \circ \left( F^{\ast }\left( f\right) \otimes
F\left( f\right) \right) =\ev_{F\left( y\right) }\circ \left( p_{%
\mathrm{Id}}q^{y}_{y}F^{\ast }\left( f\right) \otimes F(f)\right) \\
&\overset{\text{def.} F^*}=\ev_{F\left( y\right) }\circ \left( p_{\mathrm{Id}}S\left(
f,y\right) q^{x}_{y}\otimes F(f)\right)\overset{\eqref{def:Suv}} =\ev_{F\left( y\right) }\circ
\left( F(\mathrm{Id}_{y})^{\ast }p_{f}q^{x}_{y}\otimes F(f)\right) =\ev%
_{F\left( y\right) }\circ \left( p_{f}q^{x}_{y}\otimes F(f)\right) \\
&=\ev_{F\left( y\right) }\circ \left( F(y)^{\ast }\otimes
F(f)\right) \circ \left( p_{f}q^{x}_{y}\otimes F(x)\right) \overset{\left( \ref%
{form:dual}\right) }{=}\ev_{F\left( x\right) }\circ \left(
F(f)^{\ast }\otimes F(x)\right) \circ \left( p_{f}q^x_{y}\otimes F(x)\right) \\
&=\ev_{F\left( x\right) }\circ \left( F(f)^{\ast }p_{f}q^{x}_{y}\otimes
F(x)\right) \overset{\eqref{def:Suv}}=\ev_{F\left( x\right) }\circ \left( p_{\mathrm{Id}%
}S\left( x,f\right) q^{x}_{y}\otimes F(x)\right) \\
&\overset{q^{x}_{y}\text{ cocone}}{=}\ev_{F\left( x\right) }\circ
\left( p_{\mathrm{Id}}q^{x}_{x}\otimes F(x)\right) =\left( \ev_{F}\right) _{x}=\unit^{\prime }\left( f\right)
\circ \left( \ev_{F}\right) _{x}.
\end{align*}

Define now%
\begin{equation*}
\Psi :\mathrm{Nat}\left( T,F^{\ast }\right) \rightarrow \mathrm{Nat}%
(T\otimes F,\unit^{\prime }):\lambda \rightarrow \ev_{F}\circ
\left( \lambda \otimes F\right)
\end{equation*}%
For $\alpha :=\Psi \left( \lambda \right) =\ev_{F}\circ \left(
\lambda \otimes F\right) $, we have
\begin{equation}\label{form:pslmbdht}(\alpha _{y})^\dag
=\left( \left( \ev%
_{F}\right) _{y}\circ \left( \lambda _{y}\otimes F\left( y\right) \right)
\right) ^\dag
=\left( \ev_{F\left( y\right) }\circ \left( p_{\mathrm{Id}%
}q^y_{y}\lambda _{y}\otimes F(y)\right) \right) ^\dag=p_{\mathrm{Id}%
}\circ q^y_{y}\circ\lambda _{y}
\end{equation}
and hence, for every $g:x\to y$,
\begin{align*}
p_{g}\circ q^{x}_{y}\circ \Phi \left( \Psi \left( \lambda \right) \right) _{x}
&=p_{g}\circ q^{x}_{y}\circ (\alpha^\dag) _{x} \overset{\eqref{def:alphahatx}}=p_{g}\circ \alpha ^{x}_{y} \overset{\eqref{def:alphaxy}}=%
(\alpha _{y})^\dag\circ T\left( g\right)
\overset{\eqref{form:pslmbdht}}=p_{\mathrm{Id}%
}\circ q^y_{y}\circ\lambda _{y}\circ T\left( g\right)
\\
&\overset{\text{nat.}\lambda}=p_{\mathrm{Id}%
}\circ q^y_{y}\circ F^*\left( g\right)\circ\lambda _{x}\overset{\text{def.} F^*}=p_{\mathrm{Id}%
}\circ S\left( g,y\right)\circ q^x_{y}\circ \lambda _{x}\overset{\eqref{def:Suv}}=p_{g%
}\circ q^x_{y}\circ \lambda _{x}
\end{align*}%
so that $\Phi \left( \Psi \left( \lambda \right) \right) =\lambda .$
Conversely
\begin{eqnarray*}
\Psi \left( \Phi \left( \alpha \right) \right) _{x} &=&\left( \ev%
_{F}\circ \left( \Phi \left( \alpha \right) \otimes F\right) \right)
_{x}=\left( \ev_{F}\right) _{x}\circ \left( (\alpha^\dag)%
_{x}\otimes F\left( x\right) \right) =\ev_{F\left( x\right) }\circ
\left( p_{\mathrm{Id}}q_{x}\otimes F(x)\right) \circ \left( (\alpha^\dag)%
_{x}\otimes F\left( x\right) \right) \\
&=&\ev_{F\left( x\right) }\circ \left( p_{\mathrm{Id}}q^x_{x}(\alpha^\dag)_{x}\otimes F(x)\right) \overset{\eqref{def:alphahatx}}{=}%
\ev_{F\left( x\right) }\circ \left( p_{\mathrm{Id}}\alpha
_{x}^{x}\otimes F(x)\right) \overset{\eqref{def:alphaxy}}{=}\mathrm{ev%
}_{F\left( x\right) }\circ \left((\alpha _{x})^\dag\otimes F(x)\right)
=\alpha _{x}
\end{eqnarray*}%
so that $\Psi \left( \Phi \left( \alpha \right) \right) =\alpha $. As a
consequence $\Psi $ is bijective and hence $\Cc^{\mathcal{I}}$ is
pre-rigid.
\end{proof}

\begin{remark}
For those who are familiar with the language of ends, we sketch here a
different approach to the proof of the previous result; it can be seen as
an adaptation of \cite{Shulman-mathoverflow}. Consider the same
functor $S:\mathcal{I}\times \mathcal{I}^\op \rightarrow
\Cc:\left( x,y^\op \right) \mapsto S\left( x,y\right) .$
Define now the functor $S^{\prime }\left( x\right) :\mathcal{I}^{^{\mathrm{op%
}}}\times \mathcal{I}\rightarrow \Cc:\left( y^{^{\op %
}},z\right) \mapsto S\left( x,y\right) $ which is constant in $z$. By \cite[%
Corollary 2, page 224]{MacLane}, the end of $S^{\prime }\left( x\right) $
exists and coincide with the limit of the functor $S\left( x,-\right) :%
\mathcal{I}^\op \rightarrow \Cc:y^{^{\op %
}}\mapsto S\left( x,y\right) \ $i.e. with $F^{\ast }(x)$ (left-hand version
of \cite[Proposition 3, page 225]{MacLane} applied $S^{\prime }\left(
x\right) $ represented as the composition $\mathcal{I}^{^{\op %
}}\times \mathcal{I}\overset{Q}{\rightarrow }\mathcal{I}^\op %
\overset{S\left( x,-\right) }{\rightarrow }\Cc$ where $Q$ is the
first projection). As a consequence we can write
\begin{equation*}
F^{\ast }(x)=\int_{y\in \mathcal{I}}\prod_{\hom _{\mathcal{I}%
}(x,y)}F(y)^{\ast }
\end{equation*}%
(note that this description agrees with \eqref{endhomfuncat} in case $\Cc$ is closed, once we take $G=\unit^{\prime }$).

We compute
\begin{align*}
\mathrm{Nat}(T\otimes F,\unit^{\prime })&\overset{(a)}{\cong }\int_{y\in
\mathcal{I}}\hom _{\Cc}\left( (T\otimes F)(y),\unit%
^{\prime }(y)\right) =\int_{y\in \mathcal{I}}\hom _{\Cc%
}\left( T(y)\otimes F(y),\unit\right) \\
\cong& \int_{y\in \mathcal{I}}\hom _{\Cc}\left(
T(y),F(y)^{\ast }\right) \overset{(b)}{\cong }\int_{y\in \mathcal{I}%
}\int_{x\in \mathcal{I}}\hom _{\Cc}\left( T(x),\prod_{\mathrm{%
Hom}_{\mathcal{I}}(x,y)}F(y)^{\ast }\right) \\
\overset{(c)}{\cong }&\int_{x\in \mathcal{I}}\int_{y\in \mathcal{I}}\mathrm{%
Hom}_{\Cc}\left( T(x),\prod_{\hom _{\mathcal{I}%
}(x,y)}F(y)^{\ast }\right) \\
\overset{(d)}{\cong }&\int_{x\in \mathcal{I}}\hom _{\Cc}\left(
T(x),\int_{y\in \mathcal{I}}\prod_{\hom _{\mathcal{I}%
}(x,y)}F(y)^{\ast }\right)\\
\cong &\int_{x\in \mathcal{I}}\hom _{%
\Cc}\left( T(x),F^{\ast }(x)\right) \overset{(a)}{\cong }\mathrm{Nat}%
\left( T,F^{\ast }\right)
\end{align*}%
where in $(a)$ we used \cite[(2) on page 223]{MacLane}, in $(c)$ the Fubini
rule for ends \cite[page 231]{MacLane}, in $(d)$ we used \cite[(4) on page 225]{MacLane} and in $\left( b\right) $ we applied for $C=F(y)^{\ast
}$ the isomorphism $\hom _{\Cc}\left( T(y),C\right) \cong
\int_{x\in I}\hom _{\Cc}\left( T(x),\prod_{\hom _{%
\mathcal{I}}(x,y)}C\right) \ $that can be achieved by the following
argument. Given a functor $G:\mathcal{J}\rightarrow \mathsf{Set}$, by Yoneda
Lemma one has, for $y\in \mathcal{J}$%
\begin{equation*}
G\left( y\right) \cong \mathrm{Nat}(\hom _{\mathcal{J}}\left(
y,-\right) ,G)\overset{(a)}{\cong }\int_{x\in \mathcal{J}}\hom _{%
\mathsf{Set}}(\hom _{\mathcal{J}}\left( y,x\right) ,G\left( x\right)
)=\int_{x\in \mathcal{J}}\prod_{\hom _{\mathcal{J}}(y,x)}G\left( x\right) .%
\text{ }
\end{equation*}%
Note that, by \cite[Formula (3), page 242]{MacLane}, the last term coincides
with the right Kan extension $\mathrm{Ran}_{\mathrm{Id}_{\mathcal{J}}}G$ of $G$ along $%
\mathrm{Id}_{\mathcal{J}}$. The above isomorphism can then be seen as a consequence of
the fact that $\mathrm{Ran}_{K}G$, for some functor $K,$ is uniquely
determined by $\mathrm{Nat}(T,\mathrm{Ran}_{K}G)\cong \mathrm{Nat}(T\circ
K,G)$ and by the trivial equality $\mathrm{Nat}(T,G)=\mathrm{Nat}(T\circ
\mathrm{Id}_{\mathcal{J}},G)$.

In case $\mathcal{J}=\mathcal{I}^\op $ and $G:=\hom _{%
\Cc}\left( T(-),C\right) :\mathcal{I}^\op \rightarrow
\mathsf{Set}:x^\op \mapsto \hom _{\Cc}\left(
T(x),C\right) ,$ we get
\begin{eqnarray*}
\hom _{\Cc}\left( T(y),C\right) &=&\int_{x^{^{\op %
}}\in \mathcal{I}^\op }\prod_{\hom _{\mathcal{I}^{^{%
\op }}}\left( y^\op ,x^\op \right) }\mathrm{%
Hom}_{\Cc}\left( T(x),C\right) \\
&=&\int_{x\in \mathcal{I}}\prod_{\hom _{\mathcal{I}}(x,y)}\hom %
_{\Cc}\left( T(x),C\right) \cong \int_{x\in \mathcal{I}}\hom _{\mathcal{%
C}}\left( T(x),\prod_{\hom _{\mathcal{I}}(x,y)}C\right) .
\end{eqnarray*}%
\end{remark}

\subsection{The category of \texorpdfstring{$G$}{TEXT}-graded vector spaces \texorpdfstring{$\Mm^G$}{TEXT}}
Here we consider the construction of the category of externally $G$-graded $\Mm$-objects where $G$ is a monoid and $\Mm$ is a given monoidal category. As we will se below, this will allow us to provide a non-trivial example of a pre-rigid monoidal category which is not right closed, see Example \ref{examplerightclosed}.

\begin{claim}\label{claim:MG}Let $\left( \Mm,\otimes ,\unit\right) $ be a monoidal category
and let $G$ be a monoid with neutral element $e$. Assume that $\Mm$
has an initial object $\initial$ and coproducts indexed by $S_{g}:=\left\{
\left( a,b\right) \in G\times G\mid ab=g\right\} $ for every $g\in G$ and
that $\otimes $ preserves them. Then we can consider the monoidal category $\Mm%
^{G}$ of externally $G$-graded $\Mm$-objects, see e.g. \cite[Section 3]{Mitchell-Low}. Recall that an object
in $\Mm^{G}$ is a sequence $(X_{g})_{g\in G}$ of objects in $%
\Mm$ and a morphism is a sequence $(f_{g})_{g\in G}$ of morphisms in
$\Mm$. We can define the tensor product $X\otimes Y$ in $\Mm^{G}$ of $%
X=(X_{g})_{g\in G}$ and $Y=(Y_{g})_{g\in G}$ by the rule  $$\left( X\otimes Y\right)
_{g}:=\oplus _{(a,b)\in S_g}X_{a}\otimes Y_{b}=\oplus _{ab=g}X_{a}\otimes Y_{b},$$ and the unit by $\unit%
^{G}:=\left( \delta _{g,e}\unit\right) _{g\in G}$ where $\delta _{g,e}\unit=\unit$ if $%
g=e$ and $\delta _{g,e}\unit=\initial$ otherwise.
\end{claim}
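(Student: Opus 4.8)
The plan is to equip $\Mm^{G}$ with a tensor bifunctor, with associativity and unit constraints, and then to verify the pentagon and triangle axioms, reducing everything to the corresponding data in $\Mm$ by means of the universal properties of the $S_{g}$-indexed coproducts. First I would define $\otimes$ on morphisms: given $(f_{g}):X\to X'$ and $(h_{g}):Y\to Y'$ in $\Mm^{G}$, set $(f\otimes h)_{g}:=\oplus_{ab=g}f_{a}\otimes h_{b}$, the morphism induced on the coproduct $\oplus_{ab=g}X_{a}\otimes Y_{b}$ by the family $(f_{a}\otimes h_{b})_{(a,b)\in S_{g}}$ composed with the coproduct inclusions. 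Preservation of identities and of composition then follows componentwise from functoriality of $\otimes$ in $\Mm$ together with the uniqueness clause in the universal property of the coproduct.

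Next I would construct the associativity constraint. Using that $(-)\otimes Z_{d}$ preserves the $S_{c}$-indexed coproducts, one computes
\begin{equation*}
\big((X\otimes Y)\otimes Z\big)_{g}=\oplus_{cd=g}\Big(\oplus_{ab=c}X_{a}\otimes Y_{b}\Big)\otimes Z_{d}\cong\oplus_{abd=g}(X_{a}\otimes Y_{b})\otimes Z_{d},
\end{equation*}
and dually $\big(X\otimes(Y\otimes Z)\big)_{g}\cong\oplus_{abd=g}X_{a}\otimes(Y_{b}\otimes Z_{d})$, where the index set $\{(a,b,d)\mid abd=g\}$ is unambiguous precisely because multiplication in $G$ is associative. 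The $g$-component of the constraint $a^{G}_{X,Y,Z}$ is then the coproduct over this index set of the associators $a_{X_{a},Y_{b},Z_{d}}$ of $\Mm$ (which are identities, since the paper works with strict $\Mm$), conjugated by the canonical reindexing isomorphisms relating the nested coproducts above to the flat one; naturality is inherited componentwise.

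For the unit constraints I would exploit that $\otimes$ preserves the initial object. Indeed $(\unit^{G}\otimes X)_{g}=\oplus_{ab=g}(\unit^{G})_{a}\otimes X_{b}$, and for $a\neq e$ the summand is $\initial\otimes X_{b}\cong\initial$, while the single summand at $(a,b)=(e,g)$ is $\unit\otimes X_{g}$. Since a coproduct all of whose summands but one are initial is canonically isomorphic to the remaining summand, this yields $(\unit^{G}\otimes X)_{g}\cong\unit\otimes X_{g}=X_{g}$, defining $l^{G}:\unit^{G}\otimes X\to X$; the right unit constraint $r^{G}$ is obtained symmetrically, using $X_{a}\otimes\initial\cong\initial$ and $X_{g}\otimes\unit=X_{g}$.

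The remaining, and only substantial, step is the verification of the pentagon and triangle identities, which I expect to be the main obstacle --- not for conceptual reasons but for the bookkeeping of the reindexing isomorphisms across nested coproducts. On each $g$-component the pentagon equation decomposes, via the universal property of the $S_{g}$-coproduct, into the coproduct over $\{(a,b,c,d)\mid abcd=g\}$ of the pentagon identity in $\Mm$ for the objects $X_{a},Y_{b},Z_{c},W_{d}$ (which holds since $\Mm$ is monoidal) together with the compatibility of the canonical coproduct-reindexing isomorphisms; the latter is exactly the coherence of the monoidal structure carried by coproducts and is automatic. The triangle identity reduces similarly, using in addition the compatibility of the absorption isomorphisms $\initial\otimes X\cong\initial$ with the coproduct structure. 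Conceptually, all of this is clarified by the observation that $\Mm^{G}$ is precisely the Day convolution monoidal structure on $[\underline{G},\Mm]$, where $\underline{G}$ denotes $G$ regarded as a discrete monoidal category; the present hypotheses are exactly what guarantees that the defining coends exist and coincide with the stated coproducts, so that the statement also follows from the general theory of Day convolution and from \cite[Section 3]{Mitchell-Low}.
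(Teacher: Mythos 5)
The paper offers no proof of this claim: it is stated as a recalled construction, with the verification deferred to \cite[Section 3]{Mitchell-Low}, so there is no in-paper argument to compare yours against. Your direct verification is correct in outline and is essentially what one would have to write down. Two points deserve to be made explicit. First, your treatment of the unit constraints relies on $\initial\otimes X\cong\initial\cong X\otimes\initial$; this is not a formal consequence of preservation of the $S_g$-indexed coproducts alone, because no $S_g$ is empty (always $(g,e)\in S_g$), so the empty coproduct is not among the coproducts named in the hypothesis. One must therefore read the phrase ``$\otimes$ preserves them'' as including preservation of the initial object, i.e.\ of the empty coproduct --- which is evidently the intended reading, since the paper later invokes $T\otimes\initial\cong\initial$ explicitly in the proof of Proposition \ref{pro:funcat}. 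Without that reading, the left unit constraint genuinely fails (the summands $\initial\otimes X_b$ for $a\neq e$ need not be initial), so it is worth stating the assumption rather than silently using it. Second, your closing remark that $\Mm^G$ is the Day convolution structure on $[\underline{G},\Mm]$ for $G$ regarded as a discrete monoidal category is accurate and is arguably the cleanest way to dispatch the pentagon and triangle axioms wholesale: the defining coends reduce to exactly the stated coproducts (and the convolution unit to $(\delta_{g,e}\unit)_{g\in G}$, the empty copower being $\initial$), and coherence comes for free from the general theory. The paper does not take this route, but it is a legitimate and more conceptual alternative to the componentwise bookkeeping you correctly identify as the only laborious part of the hands-on argument.
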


In the case when $\Mm$ is the category $\Vec$ of vectors spaces, the category $\Vec^G$ is monoidally equivalent to the category $\Vec_G$ of $G$-graded vector spaces through the functor  $F:\Vec^G\to\Vec_G$ which maps an object $(V_g)_{g\in G}$ to $\oplus_{g\in G}V_g$ and a morphism $(f_g)_{g\in G}$ to the morphism $\oplus_{g\in G}f_g$. We already observed that the monoidal category $\Vec_G$ is closed in Example \ref{ex:closed}. The following result shows to what extend the same property is true for $\Mm^G$.

\begin{proposition}
\label{pro:funcatClosed}In the setting of \ref{claim:MG}, assume further
that $\mathcal{M}$ has products indexed by
$G$. If $\mathcal{M}$ is closed so is the category $\mathcal{M}^{G}$ where $%
[V,W]$ is defined by $[V,W]_{g}:=\prod_{h\in G}[V_{h},W_{gh}]$ for every
objects $V$ and $W$ in $\mathcal{M}^{G}$.
\end{proposition}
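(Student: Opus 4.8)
The plan is to exhibit, for each object $V$ in $\Mm^{G}$, a right adjoint to the functor $(-)\otimes V:\Mm^{G}\to\Mm^{G}$, given on objects by the stated formula $[V,W]_{g}:=\prod_{h\in G}[V_{h},W_{gh}]$. By the parametrized version of representability it suffices to produce, for every $W$, a bijection $\hom_{\Mm^{G}}(X\otimes V,W)\cong\hom_{\Mm^{G}}(X,[V,W])$ natural in $X$; functoriality of $[V,-]$ and naturality in $W$ then follow automatically. Note first that $[V,W]$ is a genuine object of $\Mm^{G}$ precisely because $\Mm$ is closed (so the internal homs $[V_{h},W_{gh}]$ exist) and has $G$-indexed products (so each $[V,W]_{g}$ exists); this product hypothesis is the only new ingredient beyond the data of Claim \ref{claim:MG}.

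The core is a chain of natural isomorphisms. Since a morphism in $\Mm^{G}$ is a $G$-sequence of morphisms in $\Mm$, one has $\hom_{\Mm^{G}}(A,B)=\prod_{g\in G}\hom_{\Mm}(A_{g},B_{g})$. Applying this to $A=X\otimes V$, expanding $(X\otimes V)_{g}=\oplus_{ab=g}X_{a}\otimes V_{b}$, and using the universal property of the coproduct to turn a hom out of a coproduct into a product of homs, I would reindex the resulting nested product, grouped over $\{(a,b):ab=g\}$ with $g$ ranging over $G$, as a single product over all of $G\times G$. Then closedness of $\Mm$ is applied object-wise, and the double product is regrouped by its first index and reassembled via the universal property of the $G$-indexed product:
\begin{align*}
\hom_{\Mm^{G}}(X\otimes V,W)
&\cong\prod_{g\in G}\prod_{ab=g}\hom_{\Mm}(X_{a}\otimes V_{b},W_{g})
\cong\prod_{(a,b)\in G\times G}\hom_{\Mm}(X_{a}\otimes V_{b},W_{ab})\\
&\cong\prod_{(a,b)\in G\times G}\hom_{\Mm}(X_{a},[V_{b},W_{ab}])
\cong\prod_{g\in G}\hom_{\Mm}\Big(X_{g},\prod_{h\in G}[V_{h},W_{gh}]\Big)\\
&=\prod_{g\in G}\hom_{\Mm}(X_{g},[V,W]_{g})
=\hom_{\Mm^{G}}(X,[V,W]).
\end{align*}

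Each isomorphism in this chain is natural in $X$: the first two from the functoriality of $(-)\otimes V$ and the coproduct; the middle one from the adjunction $(-)\otimes V_{b}\dashv[V_{b},-]$ in $\Mm$; and the last from the universal property of products. Composing, I obtain a bijection natural in $X$, so $[V,-]$ is right adjoint to $(-)\otimes V$ and hence $\Mm^{G}$ is closed. I expect the main obstacle to be purely bookkeeping: carefully justifying the reindexing that identifies the nested product $\prod_{g}\prod_{ab=g}$ with the product $\prod_{(a,b)\in G\times G}$ (and, in the regrouping step, tracking which index survives as the degree $g$ of the target object), while keeping the existence of all the displayed products and internal homs anchored to the standing hypotheses on $\Mm$. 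No coherence or associativity subtleties intervene here, since only the hom-sets and the universal properties of (co)products are used.
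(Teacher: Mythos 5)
Your proposal is correct and follows essentially the same route as the paper's proof: the identical chain of hom-set isomorphisms (hom out of the coproduct as a product of homs, the adjunction $(-)\otimes V_{b}\dashv[V_{b},-]$ in $\Mm$ applied componentwise, reindexing of the double product, and reassembly via the universal property of the $G$-indexed product), differing only in that you reindex before applying closedness whereas the paper reindexes after. The formula for $[V,W]$ and the naturality considerations match the paper's argument.
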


\begin{proof}
Once recalled that $S_{g}=\left\{ \left( a,b\right) \in G\times G\mid
ab=g\right\} ,$ the conclusion comes from the following chain of bijections
whose composition is natural in $X$ and $Z$.
\begin{gather*}
\mathrm{Hom}_{\mathcal{M}^{G}}\left( X\otimes Y,Z\right) =\prod_{g\in G}%
\mathrm{Hom}_{\mathcal{M}}\left( \oplus _{\left( a,b\right) \in
S_{g}}X_{a}\otimes Y_{b},Z_{g}\right)  \\
\cong \prod_{g\in G}\prod_{\left( a,b\right) \in S_{g}}\mathrm{Hom}_{%
\mathcal{M}}\left( X_{a}\otimes Y_{b},Z_{g}\right) \cong \prod_{g\in
G}\prod_{\left( a,b\right) \in S_{g}}\mathrm{Hom}_{\mathcal{M}}\left(
X_{a},[Y_{b},Z_{g}]\right)  \\
\cong \prod_{a\in G}\prod_{b\in G}\mathrm{Hom}_{\mathcal{M}}\left(
X_{a},[Y_{b},Z_{ab}]\right) \cong \prod_{a\in G}\mathrm{Hom}_{\mathcal{M}%
}\left( X_{a},\prod_{b\in G}[Y_{b},Z_{ab}]\right)  \\
=\prod_{a\in G}\mathrm{Hom}_{\mathcal{M}}\left( X_{a},[Y,Z]_{a}\right) =%
\mathrm{Hom}_{\mathcal{M}^{G}}\left( X,[Y,Z]\right) .\qedhere
\end{gather*}
\begin{invisible}
  In generale siano \ $W_{a,b}^{g}$ degli insiemi. Abbiamo usato sopra la
formula $\prod_{g\in G}\prod_{\left( a,b\right) \in
S_{g}}W_{a,b}^{g}=\prod_{a\in G}\prod_{b\in G}W_{a,b}^{ab}.$ Vediamo che
\`{e} vera. Posto $T_{g}:=\left\{ \left( g,a,b\right) \mid g\in G,\left(
a,b\right) \in S_{g}\right\} =\left\{ \left( ab,a,b\right) \mid \left(
a,b\right) \in G\times G\right\} $
\begin{eqnarray*}
\prod_{g\in G}\prod_{\left( a,b\right) \in S_{g}}W_{a,b}^{g}
&=&\prod_{\left( g,a,b\right) \in T_{g}}W_{a,b}^{g} \\
&=&\left\{ f:T_{g}\rightarrow \bigcup \left\{ W_{a,b}^{g}:\left(
g,a,b\right) \in T_{g}\right\} \mid f\left( g,a,b\right) \in
W_{a,b}^{g},\forall \left( g,a,b\right) \in T_{g}\right\}  \\
&=&\left\{ f:T_{g}\rightarrow \bigcup \left\{ W_{a,b}^{ab}:\left(
a,b\right) \in G\times G\right\} \mid f\left( ab,a,b\right) \in
W_{a,b}^{ab},\forall \left( a,b\right) \in G\times G\right\}  \\
&&\overset{h\left( a,b\right) :=f\left( ab,a,b\right) }{=}\left\{ h:G\times
G\rightarrow \bigcup \left\{ W_{a,b}^{ab}:\left( a,b\right) \in G\times
G\right\} \mid h\left( a,b\right) \in W_{a,b}^{ab},\forall \left( a,b\right)
\in G\times G\right\}  \\
&=&\prod_{\left( a,b\right) \in G\times G}W_{a,b}^{ab}=\prod_{a\in
G}\prod_{b\in G}W_{a,b}^{ab}.
\end{eqnarray*}
\end{invisible}
\end{proof}

Next result concerns the pre-rigidity of $\Mm^G$.

\begin{proposition}
\label{pro:funcatG}In the setting of \ref{claim:MG}, assume further that the initial object $\initial$ is also terminal (i.e. a zero object). Then, if $\Mm$ is
pre-rigid so is the category $\Mm^{G}$. Explicitly, the pre-dual of $X=(X_{g})_{g\in G}$ is
defined by setting $\left( X^{\ast }\right) _{g}:=\left( \oplus _{h\in
G,gh=e}X_{h}\right) ^{\ast }.$
\end{proposition}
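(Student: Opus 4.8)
The plan is to apply Lemma \ref{lem:presheaf}: it suffices to show that for each object $X=(X_g)_{g\in G}$ of $\Mm^{G}$ the presheaf $\hom_{\Mm^{G}}(-\otimes X,\unit^{G})$ is representable, with representing object the $X^{*}$ described in the statement. I would establish this by producing a chain of bijections, natural in the variable object $T=(T_g)_{g\in G}$, relating $\hom_{\Mm^{G}}(T,X^{*})$ to $\hom_{\Mm^{G}}(T\otimes X,\unit^{G})$, both of which I will rewrite as the \emph{same} product of hom-sets of $\Mm$ indexed by $S_e=\{(a,b)\in G\times G\mid ab=e\}$. Since a morphism in $\Mm^{G}$ is a grading-preserving family $(f_g)_g$, there is no mixing of indices (in contrast with the situation for $\Fam(\Cc)$ in Proposition \ref{pro:Famprerig}), so every isomorphism below is manifestly natural in $T$.

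First I would compute the left-hand side. Using that $\hom_{\Mm^{G}}(T,X^{*})=\prod_{g\in G}\hom_\Mm(T_g,(X^{*})_g)$, the pre-rigidity of $\Mm$ applied componentwise, the fact that $\otimes$ preserves the relevant coproducts (as assumed in \ref{claim:MG}), and that hom out of a coproduct is a product of homs, I obtain
\begin{align*}
\hom_{\Mm^{G}}(T,X^{*})
&=\prod_{g\in G}\hom_\Mm\!\Big(T_g,\big(\textstyle\bigoplus_{gh=e}X_h\big)^{*}\Big)
\cong\prod_{g\in G}\hom_\Mm\!\Big(T_g\otimes\textstyle\bigoplus_{gh=e}X_h,\unit\Big)\\
&\cong\prod_{g\in G}\prod_{gh=e}\hom_\Mm\big(T_g\otimes X_h,\unit\big)
\cong\prod_{(g,h)\in S_e}\hom_\Mm\big(T_g\otimes X_h,\unit\big),
\end{align*}
each step being natural in $T$ (the pre-rigidity isomorphism is natural in its first argument by Lemma \ref{lem:contravariant}).

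Next I would compute the right-hand side. Here $\hom_{\Mm^{G}}(T\otimes X,\unit^{G})=\prod_{g\in G}\hom_\Mm((T\otimes X)_g,(\unit^{G})_g)$, and the hypothesis that $\initial$ is also terminal is exactly what is needed to discard the factors with $g\neq e$: for such $g$ one has $(\unit^{G})_g=\initial$, so $\hom_\Mm((T\otimes X)_g,\initial)$ is a singleton. Only the factor $g=e$ survives, and since $(T\otimes X)_e=\bigoplus_{ab=e}T_a\otimes X_b$, again using that hom out of a coproduct is a product of homs, I get
\begin{equation*}
\hom_{\Mm^{G}}(T\otimes X,\unit^{G})\cong\hom_\Mm\big((T\otimes X)_e,\unit\big)\cong\prod_{(a,b)\in S_e}\hom_\Mm\big(T_a\otimes X_b,\unit\big).
\end{equation*}
Relabelling $(a,b)$ as $(g,h)$ this is literally the same functor of $T$ as the final expression obtained for the left-hand side, so composing the two chains yields the desired natural isomorphism $\hom_{\Mm^{G}}(-,X^{*})\cong\hom_{\Mm^{G}}(-\otimes X,\unit^{G})$, and Lemma \ref{lem:presheaf} finishes the argument.

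The step requiring the most care — and the one I regard as the main (if modest) obstacle — is the bookkeeping of the common index set $S_e$: one must check that the two chains of isomorphisms genuinely factor through the same $T$-functor $\prod_{(g,h)\in S_e}\hom_\Mm(T_g\otimes X_h,\unit)$ factorwise, so that their composite is natural in $T$. Because the grading is preserved strictly by morphisms of $\Mm^{G}$, this is a direct verification rather than the naturality obstruction that appeared in Proposition \ref{pro:Famprerig}; the only place where the new hypothesis is used is in collapsing the $g\neq e$ factors on the right-hand side. If one wants the evaluation explicitly, it is recovered as $\ev_X=\Pi_{X^{*}}(1_{X^{*}})$ for the resulting isomorphism $\Pi$, whose $e$-component assembles, over each $(a,b)\in S_e$, the map $\ev_{\bigoplus_{ah=e}X_h}\circ(\id\otimes\iota_b)$ where $\iota_b$ is the coproduct injection of $X_b$, while its $g$-components for $g\neq e$ are the unique morphisms into $\initial$.
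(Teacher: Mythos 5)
Your proof is correct and follows essentially the same route as the paper: both arguments reduce $\hom_{\Mm^{G}}(T,X^{*})$ and $\hom_{\Mm^{G}}(T\otimes X,\unit^{G})$ to the same product $\prod_{(a,b)\in S_e}\hom_{\Mm}(T_a\otimes X_b,\unit)$ using componentwise pre-rigidity of $\Mm$, preservation of the relevant coproducts by $\otimes$, and the zero object to discard the components away from $e$. The only (cosmetic) difference is that the paper packages the collapse of the $g\neq e$ factors through an explicit monoidal-flavoured adjunction $L\dashv R$ with $L(X)=X_e$ and $R(V)=(\delta_{g,e}V)_{g\in G}$ (machinery it reuses later for Corollary \ref{coro:externlift}), whereas you invoke terminality of $\initial$ directly on the hom-sets; your explicit description of $\ev_X$ also agrees with the paper's codiagonal of the maps $\ev_{Y_a}\circ((Y_a)^{*}\otimes i_b)$.
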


\begin{proof}
First note that, if we set $W^g _{a,b}:=\delta _{a,g}X_{b},$ we
have that, by hypothesis, $\Mm$ contains $\oplus _{\left( a,b\right)
\in S_{e}}W^g _{a,b}=\oplus _{\left( a,b\right) \in
S_{e}}\delta _{a,g}X_{b}\cong \oplus _{b\in G,gb=e}X_{b}$ so that it makes
sense to define $\left( X^{\ast }\right) _{g}:=\left( Y_g\right) ^{\ast }$, where we set $Y_g:= \oplus
_{h\in G,gh=e}X_{h}$

\begin{invisible}
Let $I$ and $J$ be sets. Consider $\left( X_{i}\right) _{i\in I\cup J}$
where $X_{i}=\mathbf{0}$ for every $i\in J.$ Let us check that $\oplus
_{i\in I\cup J}X_{i}\cong \oplus _{i\in I}X_{i}.$ We compite%
\begin{equation*}
\oplus _{i\in I\cup J}X_{i}\cong \left( \oplus _{i\in I}X_{i}\right) \oplus
\left( \oplus _{i\in J}X_{i}\right) \cong \left( \oplus _{i\in
I}X_{i}\right) \oplus \left( \oplus _{i\in J}\mathbf{0}\right) .
\end{equation*}%
Thus we have to check that $\left( \oplus _{i\in I}X_{i}\right) \oplus
\left( \oplus _{i\in J}\mathbf{0}\right) \cong \oplus _{i\in I}X_{i}.$ It
suffices to check that $X\oplus \left( \oplus _{i\in J}\mathbf{0}\right)
\cong X.$ We have
\begin{eqnarray*}
\hom _{\Mm}\left( X\oplus \left( \oplus _{i\in J}\mathbf{0}%
\right) ,Y\right) &\cong &\hom _{\Mm}\left( X,Y\right) \times
\hom _{\Mm}\left( \oplus _{i\in J}\mathbf{0},Y\right) \\
&\cong &\hom _{\Mm}\left( X,Y\right) \times
\prod\limits_{i\in J}\hom _{\Mm}\left( \mathbf{0},Y\right) \\
&\cong &\hom _{\Mm}\left( X,Y\right) \times
\prod\limits_{i\in J}\left\{ 0_{\mathbf{0},Y}\right\} \cong \hom _{%
\Mm}\left( X,Y\right) .
\end{eqnarray*}%
Since this is natural in $Y,$ by Yoneda we get $X\oplus \left( \oplus _{i\in
J}\mathbf{0}\right) \cong X$ as desired.
\end{invisible}

Since $\initial$ is a zero object, for every morphism $f$ in $\Mm$, we can define $\delta _{g,e}f$ to be $f$ if $g=e$ and to be the zero morphism otherwise. Consider the functors
\begin{eqnarray*}
L:\Mm^{G}\rightarrow \Mm, &&\qquad X=(X_{g})_{g\in G}\mapsto
X_{e},\qquad f=(f_{g})_{g\in G}\mapsto f_{e}, \\
R:\Mm\rightarrow \Mm^{G}, &&\qquad V\mapsto \left( \delta
_{g,e}V\right) _{g\in G},\qquad f\mapsto \left( \delta _{g,e}f\right) _{g\in
G}.
\end{eqnarray*}%
Note that $LRV=\left( RV\right) _{e}=V$ and let $\epsilon _{V}:=\mathrm{Id}%
_{V}.$ Moreover $RLX=RX_{e}=\left( \delta _{g,e}X_{e}\right) _{g\in G}.$
Define $\eta _{X}:=\left( \delta _{g,e}\mathrm{Id}_{X_{e}}\right) _{g\in
G}:X\rightarrow RLX.$ This way we get natural transformations $\eta $ and $%
\epsilon $ such that $\left( L,R,\eta ,\epsilon \right) $ is an adjunction.

\begin{invisible}
Given $f:X\rightarrow Y$ we check the naturality of $\eta $ as follows%
\begin{equation*}
RLf\circ \eta _{X}=\left( \delta _{g,e}f_{e}\right) _{g\in G}\circ \left(
\delta _{g,e}\mathrm{Id}_{X_{e}}\right) _{g\in G}=\left( \delta
_{g,e}f_{e}\right) _{g\in G}=\left( \delta _{g,e}\mathrm{Id}_{Y_{e}}\right)
_{g\in G}\circ \left( \delta _{g,e}f_{e}\right) _{g\in G}=\eta _{Y}\circ f.
\end{equation*}

Since $\epsilon =\mathsf{id}$, in order to check that $\eta $ and $\epsilon $
give rise to an adjunction we have to prove that $L\eta =\mathrm{Id}_{L}$
and $\eta R=\mathrm{Id}_{R}.$ We have%
\begin{eqnarray*}
L\eta _{X} &=&\left( \eta _{X}\right) _{e}=\mathrm{Id}_{X_{e}}=\mathrm{Id}%
_{LX}, \\
\eta _{RV} &=&\left( \delta _{g,e}\mathrm{Id}_{\left( RV\right) _{e}}\right)
_{g\in G}=\left( \delta _{g,e}\mathrm{Id}_{V}\right) _{g\in G}=\mathrm{Id}%
_{RV}.
\end{eqnarray*}

Note that $L$ is not strong monoidal. In fact $L\left( X\otimes Y\right)
=\left( X\otimes Y\right) _{e}=\oplus _{ab=e}X_{a}\otimes Y_{b}\neq
X_{e}\otimes Y_{e}=LX\otimes LY.$
\end{invisible}

Set $G^{r}:=\left\{ a\in G\mid \exists b\in G,ab=e\right\} .$ Then%
\begin{eqnarray*}
\oplus _{a\in G}T_{a}\otimes Y_a&=&\oplus _{a\in G}T_{a}\otimes (\oplus _{b\in G,ab=e}X_{b}) \cong\oplus _{a\in G}\oplus _{b\in G,ab=e}T_{a}\otimes X_{b} \\
&=&\left( \oplus
_{a\in G^{r}}\oplus _{b\in G,ab=e}T_{a}\otimes X_{b}\right) \oplus \left(
\oplus _{a\in G\backslash G^{r}}\oplus _{b\in G,ab=e}T_{a}\otimes
X_{b}\right) \\
&=&\left( \oplus _{\left( a,b\right) \in G\times G,ab=e}T_{a}\otimes
X_{b}\right) \oplus \left( \oplus _{a\in G\backslash G^{r}}\oplus _{b\in
\emptyset }T_{a}\otimes X_{b}\right) \\
&=&\left( \oplus _{ab=e}T_{a}\otimes X_{b}\right) \oplus \left( \oplus
_{a\in G\backslash G^{r}}\initial\right) \cong \oplus _{ab=e}T_{a}\otimes
X_{b}=(T\otimes X)_e=L(T\otimes X).
\end{eqnarray*}%
Moreover, since $\unit^{G}=\left( \delta _{g,e}\unit\right) _{g\in G}=R\unit$, we get
\begin{gather*}
\hom _{\Mm^{G}}\left( T\otimes X,\unit^{G}\right) =%
\hom _{\Mm^{G}}\left( T\otimes X,R\unit\right) \cong
\hom _{\Mm}\left( L\left( T\otimes X\right) ,\unit\right)
\\
\cong\hom _{\Mm}\left( \oplus _{a\in G}T_{a}\otimes Y_a,\unit%
\right)  \cong \prod_{a\in G}\hom %
_{\Mm}\left( T_{a}\otimes Y_a ,%
\unit\right) \\
\cong \prod_{a\in G}\hom _{\Mm}\left( T_{a},\left( Y_a\right) ^{\ast }\right) =\prod_{a\in G}\hom _{%
\Mm}\left( T_{a},\left( X^{\ast }\right) _{a}\right) =\hom _{%
\Mm^{G}}\left( T,X^{\ast }\right) .
\end{gather*}%
A direct computation shows that this yield the bijection $\hom _{%
\Mm^{G}}\left( T,X^{\ast }\right)\to \hom _{\Mm^{G}}\left( T\otimes X,\unit^{G}\right)$, $u\mapsto \ev_{X}\circ(u\otimes X)$ (whence $\Mm^G$ is pre-rigid), where $\ev_{X}$ is defined as follows.    Consider, for every $b\in G$ such that $ab=e$, the canonical inclusion $i_b:X_b\to Y_a$
and the morphism $f_{a,b}$ defined by $(X^*)_a\otimes X_b=(Y_a)^*\otimes X_b\overset{(Y_a)^*\otimes i_b}{\to}(Y_a)^*\otimes Y_a\overset{\ev_{Y_a}}{\to}\unit$. Then $(\ev_{X})_g:\oplus
_{ab=g}(X^*)_{a}\otimes X_b\to \unit^G_g$ is defined to be the zero morphism if $g\neq e$ and to be the codiagonal map of the $f_{a,b}$'s otherwise.
\end{proof}

As a consequence we get the following result.

\begin{proposition}
\label{pro:funcat} 
Let $\Mm$ be a monoidal category where $\Mm
$ has finite coproducts and $\otimes $ preserves them. Assume that the
initial object is also terminal. If $\Mm$ is pre-rigid so is $\Mm^{\NN}$. Explicitly, the pre-dual of $%
X=\left( X_{n}\right) _{n\in \NN}$ is defined by $(X^{\ast })_n:=\delta
_{n,0} \left(X_{0}\right) ^{\ast }$ for every $n\in\NN$. 
\end{proposition}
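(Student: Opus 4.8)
The plan is to derive this statement as the special case $G=\NN$, endowed with its additive monoid structure so that the neutral element is $e=0$, of Proposition \ref{pro:funcatG}. The first step is therefore to check that $\NN$ together with $\Mm$ meets the standing hypotheses of \ref{claim:MG} and the extra one of \ref{pro:funcatG}. Here the key observation is that for every $g\in\NN$ the indexing set $S_g=\{(a,b)\in\NN\times\NN\mid a+b=g\}=\{(0,g),(1,g-1),\dots,(g,0)\}$ is \emph{finite}, of cardinality $g+1$; hence the coproducts indexed by the $S_g$ demanded by \ref{claim:MG}, together with their preservation by $\otimes$, are guaranteed by the assumption that $\Mm$ has finite coproducts preserved by $\otimes$. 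Since $\Mm$ is assumed to have an initial object which is also terminal, i.e.\ a zero object, the remaining hypothesis of \ref{pro:funcatG} holds, and as $\Mm$ is pre-rigid we may invoke that proposition to conclude that $\Mm^{\NN}$ is pre-rigid.

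It then remains to simplify the pre-dual formula supplied by \ref{pro:funcatG}, which reads $(X^*)_g=(\oplus_{h\in\NN,\,g+h=0}X_h)^*$. Here I would use the elementary arithmetic fact that in $\NN$ the equation $g+h=0$ forces $g=h=0$. Consequently the indexing set $\{h\in\NN\mid g+h=0\}$ is the singleton $\{0\}$ when $g=0$ and is empty otherwise, so that the coproduct appearing inside the pre-dual is $X_0$ for $g=0$ and the initial object $\initial$ for $g\ge 1$. This already yields $(X^*)_0=(X_0)^*$.

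For the components with $g\ge 1$ I would record that the pre-dual $\initial^*$ of the initial object is again $\initial$: since $\otimes$ preserves the empty coproduct we have $T\otimes\initial\cong\initial$ for every $T$, whence the presheaf $\hom_\Mm(-\otimes\initial,\unit)\cong\hom_\Mm(\initial,\unit)$ is the constant one-point presheaf, which is represented by a terminal object; as $\initial$ is terminal by hypothesis, $\initial^*\cong\initial$. Thus $(X^*)_g\cong\initial$ for $g\ge 1$, and combining the two cases gives exactly $(X^*)_n=\delta_{n,0}(X_0)^*$ as claimed.

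I do not expect a serious obstacle, the argument being a bookkeeping specialization; the one point deserving care is that the coproduct $\oplus_{a\in\NN}T_a\otimes Y_a$ occurring inside the proof of \ref{pro:funcatG} is \emph{a priori} indexed by the infinite monoid $\NN$, whereas we only assume finite coproducts. This is harmless because $Y_a=\oplus_{h:\,a+h=0}X_h\cong\initial$ for every $a\ge 1$, so $T_a\otimes Y_a\cong\initial$ for all $a$ except $a=0$; a coproduct all but one of whose summands equal the zero object $\initial$ satisfies the universal property of its single remaining summand, so the relevant coproduct exists and equals $(T\otimes X)_0=L(T\otimes X)$ using only finite coproducts, exactly as the chain of isomorphisms in \ref{pro:funcatG} records.
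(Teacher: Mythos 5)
Your proposal is correct and follows essentially the same route as the paper: specialize Proposition \ref{pro:funcatG} to $G=\NN$ after observing that each $S_n$ is finite, and then identify $(\delta_{n,0}X_0)^*$ with $\delta_{n,0}(X_0)^*$ by showing $\initial^*\cong\initial$ via $\hom_{\Mm}(T,\initial^*)\cong\hom_{\Mm}(T\otimes\initial,\unit)\cong\hom_{\Mm}(\initial,\unit)$ being a singleton. Your closing remark about the coproduct $\oplus_{a\in\NN}T_a\otimes Y_a$ collapsing because all summands with $a\geq 1$ are zero is a sensible extra precaution that the paper leaves implicit, but it does not change the argument.
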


\begin{proof} Note that, since $\Mm$ has finite coproducts it has also the empty
coproduct i.e. the initial object. Given $n\in \NN$, then $%
S_{n}:=\left\{ \left( a,b\right) \in
\NN
\times
\NN
\mid a+b=n\right\} $ is finite so that $\Mm$ contains all coproducts
indexed by $S_{n}$. As a consequence we are in the setting of \ref{claim:MG} and hence can consider the monoidal category $%
\Mm^{\NN}$ with unit defined by $\mathbf{I}_{n}^{\NN}=\delta
_{n,0}\mathbf{I}$. By Proposition \ref{pro:funcatG}, we get that $\Mm%
^{\NN}$ is pre-rigid. Explicitly, the pre-dual of $X=(X_{n})_{n\in
\NN
}$ is defined by setting $\left( X^{\ast }\right) _{n}=\left( \oplus _{h\in
\NN
,n+h=0}X_{h}\right) ^{\ast }=\left( \delta _{n,0}X_{0}\right) ^{\ast }$.
Note that, since $\initial$ is an initial object, then $\initial^*$ is a terminal object as $\hom _{%
\Mm}\left( T,\mathbf{0}^{\ast }\right) \cong \hom _{\mathcal{M%
}}\left( T\otimes \mathbf{0},\mathbf{I}\right) \cong \hom _{\mathcal{M%
}}\left( \mathbf{0},\mathbf{I}\right) $ is a singleton (we are using that $T\otimes \left( -\right) $ preserves finite coproducts and in particular $\mathbf{0}$ i.e. $T\otimes \mathbf{0\cong 0}$). Thus, we get $\mathbf{0}^{\ast }\cong \mathbf{0}$. As a consequence we arrive at $\left( \delta _{n,0}X_{0}\right) ^{\ast }\cong
\delta _{n,0}\left( X_{0}\right) ^{\ast }$.
\end{proof}


\begin{example}
\label{examplerightclosed}
Consider $%
\Vec ^{\textrm{f}}$ and denote by $%
\Aa$ the category $\left( \Vec ^{\textrm{f}}\right) ^{\mathbb{N%
}}$ of externally $\NN$-graded $ \Vec ^{\textrm{f}}$-objects. Since $\Vec ^{\textrm{f}}$ is a  monoidal category where $\Vec ^{\textrm{f}}$ is abelian and the tensor product preserves finite coproducts, we can apply Proposition \ref{pro:funcat} to conclude that $%
\Aa$ is a pre-rigid monoidal category too. Let us check that
$\Aa$ is not right closed. Suppose the opposite, i.e. assume that $-\otimes V\dashv [V,-]$ for $%
V=(k)_{n\in \NN}\in \Aa$.
Thus, if we consider the unit object $U=\left( \delta _{n,0}k\right) _{n\in
\NN}$, we get
\begin{equation*}
\hom _{\Aa}\left( V,V \right)
\cong\hom _{\Aa}\left( U\otimes V,V \right)
\cong \hom _{\Aa}\left( U,\left[V ,V \right] \right) \cong \left[ V ,V %
\right] _{0}.
\end{equation*}%
Since the latter is finite-dimensional, we obtain the desired contradiction by observing that
\begin{gather*}
\hom _{\Aa}\left( V ,V \right)
=\hom _{\Aa}\left( (k)_{n\in \NN},(k)_{n\in \NN}\right) =\prod _{n\in \NN}\hom _{k}\left( k,k\right)
\cong \prod _{n\in \NN}k= k^{\NN}.
\end{gather*}%
Note that, by the same argument used above, the category $\Vec^{\NN}$  is a pre-rigid monoidal category too. In contrast $\Vec^{\NN}$ is closed, where $[V,W]_n:=\prod_{t\in \NN}\hom _{k}(V_t,W_{t+n})$, as Proposition \ref{pro:funcatClosed} shows.

\end{example}

\section{Pre-rigidity and liftability}\label{finalsection}
In this final section, we propose to study liftability of adjoint pairs of functors in the light of general pre-rigid braided monoidal categories. In \cite{GV-OnTheDuality}, the liftability condition in the motivating examples is shown to hold by rather ad-hoc methods. It is our purpose here to treat the case of generic pre-rigid braided monoidal categories in a more systematic way. We first recall what this liftability condition precisely is (Definition \ref{def:liftable}) and what this condition means for the bialgebra objects in the involved categories. In Example \ref{ex:liftable}, which seems to be new and is considered to be of independent interest, we show that not every adjunction is liftable. In Proposition \ref{prop:prerigid}, we show that the pre-dual construction defines a special type of self adjoint functor $R=(-)^*:\Cc^\op\to\Cc$. In Proposition \ref{lem:Barop}, we show that this type of functor gives rise to a liftable pair whenever the functor it induces at the level of algebras has a left adjoint and we apply this result to the specific functor $R=(-)^*:\Cc^\op\to\Cc$ in Corollary \ref{coro:Isar}. Then, in Proposition \ref{pro:monadj}, we provide a criterion to transport the desired liftability from one category to another in presence of a suitable monoidal adjunction and we apply it, in Corollary \ref{coro:externlift}, to transfer liftability from a category $\Mm$ to the category $\Mm^\NN$ of externally $\NN$-graded objects. As a consequence, we arrive at Example \ref{example-prerigidnotclosed} which revisits Example \ref{examplerightclosed} and provides an instance of a situation in which Corollary \ref{coro:externlift} (properly) holds; it shows how -in favorable cases- the notion of pre-rigid category allows to construct liftable pair of adjoint functors when the right-closedness is not available.
\subsection{Liftability of adjoint pairs}

Let $\left( L:\Bb\rightarrow \Aa,R:\Aa%
\rightarrow \Bb\right) $ be an adjunction with unit $\eta$ and counit $\epsilon$. It is known, see e.g. \cite[Proposition 3.84]%
{Aguiar-Mahajan}, that if $(L,\psi _{2},\psi _{0})$ is a colax monoidal functor,
then $(R,\phi _{2},\phi _{0})$ is a lax monoidal functor where, for every $%
X,Y\in \Aa$,%
%
%
%
\begin{gather}
\phi _{2}\left( X,Y\right) =\left(\xymatrixcolsep{35pt}\xymatrix{RX\otimes RY\ar[r]^-{\eta_{ \left(
RX\otimes RY\right)}}&RL\left( RX\otimes RY\right)\ar[r]^{R\psi _{2}\left( RX,RY\right)} &R\left( LRX\otimes
LRY\right)\ar[r]^-{R\left( \epsilon_{X}\otimes \epsilon_{Y}\right)} & R\left( X\otimes Y\right)}\right), \label{form:PhiFromPsi2}\\
\phi _{0} =\left( \xymatrix{ \unit_{\Bb}\ar[r]^-{\eta_{ \unit_{\mathcal{%
B}}}}& RL\left( \unit_{\Bb}\right) \ar[r]^-{R\psi _{0}}&R\left( \unit_{\Aa}\right)} \right).   \label{form:PhiFromPsi0}
\end{gather}
Conversely, if $(R,\phi _{2},\phi _{0})$ is a lax monoidal functor, then $%
(L,\psi _{2},\psi _{0})$ is a colax monoidal functor where, for every $%
X,Y\in \Bb$
\begin{gather}
\psi _{2}\left( X,Y\right) :=\left(\xymatrixcolsep{35pt}\xymatrix{ L\left( X\otimes Y\right) \ar[r]^-{%
L\left( \eta_{ X}\otimes \eta_{Y}\right) }&L\left( RLX\otimes
RLY\right) \ar[r]^-{L\phi _{2}\left( LX,LY\right) }%
&LR\left( LX\otimes LY\right) \ar[r]^-{\epsilon_{\left( LX\otimes LY\right) }}& LX\otimes LY}\right) ,   \label{form:PsiFromPhi2}\\
\psi _{0} =\left( \xymatrix{L\left( \unit_{\Bb}\right) \ar[r]^-{L\phi _{0}%
}& LR\left( \unit_{\Aa}\right)\ar[r]^-{%
\epsilon_{\unit_{\Aa}}}&\unit_{\Aa%
}}\right) .  \label{form:PsiFromPhi0}
\end{gather}

Let $(R,\phi_2 ,\phi_0 ):\Aa\rightarrow \Bb$ be a lax monoidal functor. It is
well-known that $R$ induces a functor $\overline{R}:={\Alg}(R):{\Alg}({\Aa})\rightarrow {\Alg}({\Bb})$ such that the
diagram on the right-hand side in (\ref{diag:bar}) commutes (cf. \cite[Proposition
6.1, page 52]{Benabou-IntrodBicat}; see also \cite[Proposition 3.29]%
{Aguiar-Mahajan}). Explicitly,%
\begin{equation*}
\overline{R}\left( A,m,u\right) =\left( RA,\xymatrix{RA\otimes RA\ar[r]^-{\phi
_{2}\left( A,A\right) }&R\left( A\otimes A\right)\ar[r]^-{%
Rm}&RA},\xymatrix{\unit_{{\Bb}}\ar[r]^-{\phi _{0}}&R\unit_{{\Aa}}\ar[r]^-{Ru}&
RA}\right).
\end{equation*}%
Dually, a colax monoidal functor $(L,\psi _{2},\psi _{0}):\Bb%
\rightarrow \Aa$ colifts to a functor $\underline{L}:={\Coalg}(L):%
{\Coalg}({\Bb})\rightarrow {\Coalg}({\Aa})$ such
that the diagram on the left-hand side in (\ref{diag:bar}) commutes. Explicitly,%
\begin{equation*}
\underline{L}\left( C,\Delta ,\varepsilon \right) =\left( LC,\xymatrix{LC\ar[r]^-{%
L\Delta }&L\left( C\otimes C\right) \ar[r]^-{\psi
_{2}\left( C,C\right) }&LC\otimes LC},\xymatrix{LC\ar[r]^-{%
L\varepsilon }&L\unit_{{\Bb}}\ar[r]^-{\psi _{0}%
}&\unit_{{\Aa}}}\right).
\end{equation*}%
The vertical arrows in the two diagrams below are the obvious forgetful functors.

\begin{equation}
\begin{array}{ccc}
\xymatrix{{\Coalg}({\Bb})\ar[d]_ {\mho '=\mho _{\Bb} }\ar[rr]^-{\underline{L}=\Coalg(L)}  && {{\Coalg}({\Aa})}\ar[d]^{\mho
=\mho _{{\Aa}}}  \\
{\Bb}   \ar[rr]^-{L} && {\Aa}
}
\end{array}%
\qquad
\begin{array}{ccc}
\xymatrix{{\Alg}({\Aa})\ar[d]_ {\Omega =\Omega _{{\Aa}}}\ar[rr]^-{\overline{R}=\Alg(R)}  && {{\Alg}({\Bb})}\ar[d]^{\Omega '
=\Omega _{{\Bb}}}  \\
{\Aa}   \ar[rr]^-{R} && {\Bb}
}
\end{array}
\label{diag:bar}
\end{equation}

\begin{definition}[{\protect\cite[Definition 2.3]{GV-OnTheDuality}}]
\label{def:liftable}Suppose $\Aa$ and $\Bb$  are monoidal
categories and $R:\Aa\rightarrow \Bb$ is a lax
monoidal functor with a left adjoint $L$. The pair $(L,R)$ is called \textit{liftable} if the induced functor $\overline{R}=\Alg(R):\Alg(\Aa)\to \Alg(\Bb)$ has a left adjoint, denoted by $\overline{L}$, and the induced functor $\underline{L}=\Coalg(L):\Coalg(\Bb)\to \Coalg(\Aa)$ has a right adjoint, denoted by $\underline{R}$.
\end{definition}

\subsection{Liftability for braided categories}
 Recall that when a category is \textit{braided} monoidal, its category of algebras and its category of coalgebras inherit the monoidal structure, see e.g. \cite[1.2.2]{Aguiar-Mahajan}.
Let $\Aa$ and $\Bb$ now be braided monoidal
categories and let $R:\Aa\rightarrow \Bb$ be a braided lax
monoidal functor having a left adjoint $L$. By \cite[Proposition 3.80]%
{Aguiar-Mahajan}, the functor
$\overline{R}$ is lax monoidal too. Explicitly, the lax monoidal functors $%
(R,\phi _{2},\phi _{0})$ and $(\overline{R},\overline{\phi }_{2},\overline{%
\phi }_{0})$ are connected by the following equalities, for every $\overline{%
A}=\left( A,m_{A},u_{A}\right) ,\overline{B}=\left( B,m_{B},u_{B}\right) \in
{\Alg}({\Aa})$
\begin{equation}
\Omega _{{\Bb}}\circ \overline{R}=R\circ \Omega _{\mathcal{%
A}} ,\qquad \Omega _{{\Bb}} (\overline{\phi }%
_{2}\left( \overline{A},\overline{B}\right)) =\phi _{2}\left( A,B\right)
,\qquad  \Omega _{{\Bb}}(\overline{\phi }_{0})=\phi _{0}.
\label{form:phiOver}
\end{equation}%
Note that $R$ is a braided lax monoidal functor if and only if $L$ is a braided colax monoidal functor, see e.g. \cite[Proposition 3.85]%
{Aguiar-Mahajan}. Moreover, if   $L$ is a braided colax monoidal functor one shows in a similar fashion as above that $\underline{L}$ is colax monoidal.
The colax monoidal functors $(L,\psi _{2},\psi _{0})$ and $(%
\underline{L},\underline{\psi }_{2},\underline{\psi }_{0})$ are connected by
the following equalities for every $\underline{C}=\left( C,\Delta
_{C},\varepsilon _{C}\right) ,\underline{D}=\left( D,\Delta _{D},\varepsilon
_{D}\right) \in {\Coalg}({\Bb})$
\begin{equation}
\mho _{\Aa}\circ \underline{L}=L\circ \mho _{{\Bb}%
} ,\qquad  \mho _{\Aa}( \underline{\psi }%
_{2}\left( \underline{C},\underline{D}\right) )=\psi _{2}\left( C,D\right)
,\qquad \mho _{\Aa}(\underline{\psi }_{0})=\psi _{0}.
\label{form:psiUnder}
\end{equation}
As the following Example \ref{ex:liftable} shows, a pair $(L,R)$, where $R:\Aa\rightarrow \Bb$ is a (braided) lax
monoidal functor between (braided) monoidal categories $\Aa$ and $\Bb$, having a left adjoint $L$, needs not to be liftable, \textit{a priori}. But, in case $\Aa$ and $\Bb$ are braided monoidal
categories and $R:\Aa\rightarrow \Bb$ is a braided lax
monoidal functor having a left adjoint $L$ such that the pair $(L,R)$ \textit{is} liftable, then, by \cite[Lemma 2.4 and Theorem 2.7%
]{GV-OnTheDuality}, there is an adjunction $\left( \overline{\underline{L}},%
\overline{\underline{R}}\right) $ that fits into the following commutative diagrams (and explains the choice of the -perhaps somewhat fuzzy- term ``liftable'')

\begin{equation}
\xymatrix{{\Bialg}({\Bb})\ar[d]_ {\overline{\mho^{\prime}}}\ar[rr]^-{\overline{\underline{L}}=\Coalg(\overline{L})}  && {{\Bialg}({\Aa})}\ar[d]^{\overline{\mho '}} \\
{\Alg(\Bb})   \ar[rr]^-{\overline{L}} && {\Alg(\Aa})
}
\qquad
\xymatrix{{\Bialg}({\Aa})\ar[d]_ {\underline{\Omega}}\ar[rr]^-{\underline{\overline{R}}=\Alg(\underline{R})}  && {{\Bialg}({\Bb})}\ar[d]^{\underline{\Omega '}}  \\
{\Coalg(\Aa)}   \ar[rr]^-{\underline{R}} && {\Coalg(\Bb})
}
\label{diag:bibar}
\end{equation}
In this diagram, all vertical arrows are forgetful functors.

One could wonder whether \textit{any} appropriate adjunction $(L,R)$ is liftable. The answer is no: below we present an (apparently original) example of a lax monoidal functor $R$ between monoidal categories that has a left adjoint $L$, but for which $\overline{R}$ does not have a left adjoint.

\begin{example}\label{ex:liftable}
Let $k$ be a field and set $S:=\frac{k \left[ X\right]}{\left( X^{2}\right) }.$ Consider the functor
$$R^{f}:\Vec ^{\textrm{f}}\rightarrow \Vec ^{\textrm{f}},\quad V\mapsto
S{\otimes}_{k} V.$$
Note that the functor $R^{f}$ has a left adjoint $L^{f}$, where $%
L^{f}\left( V\right) =S^{\ast }{\otimes}_{k} V.$ As $S$ is an algebra, the functor $R^{f}$ is lax
monoidal with respect to
\begin{gather*}
  \phi_2(X,Y):(S{\otimes}_{k}X)\otimes (S{\otimes}_{k}Y)\to S{\otimes}_{k}(X\otimes Y),\quad(s\otimes_kx)\otimes (t\otimes_ky)\mapsto st\otimes_k(x\otimes y)\\
  \phi_0:k\to S{\otimes}_{k}k,\quad q\mapsto 1_S\otimes_k q
\end{gather*}
so that it induces a functor $\overline{R^{f}}:\Alg%
^{f}\rightarrow \Alg^{f}$, where we used the notation $\Alg%
^{f}=\Alg\left( \Vec ^{f}\right) $ for the category of
finite-dimensional algebras.
\\Our aim is to check that $\overline{R^{f}}$ has
no left adjoint.
\\To this end, suppose that there is a left adjoint $%
\overline{L^{f}}$ of $\overline{R^{f}}$ and denote by $\overline{\eta ^{f}}$
and $\overline{\epsilon ^{f}}$ the corresponding unit and counit. Consider
the functor $R:\Vec \rightarrow \Vec :V\mapsto S{\otimes}_{k} V.$
This functor has a left adjoint $L$ and induces a functor $\overline{R}:\mathsf{%
Alg}\rightarrow \Alg.$ By a result of Tambara (cf. \cite[Remark 1.5]{Tambara}), this functor has a
left adjoint $\overline{L}=a(S,-)$ with unit and counit $\overline{\eta }$ and $%
\overline{\epsilon }.$ By \cite[Example 1.2(ii)]{Tambara}, one has
\begin{equation*}
\overline{L}\left( S\right) =a\left( S,S\right) =k\left\{ X,Y\right\}
/\left( X^{2},XY+YX\right) .
\end{equation*}
Notice that this algebra is not finite-dimensional.
Consider the forgetful functor $\overline{\Lambda }:\Alg%
^{f}\rightarrow \Alg$. Clearly $\overline{\Lambda }\circ \overline{%
R^{f}}=\overline{R}\circ \overline{\Lambda }$. 
We will negate that $\overline{\Lambda }\overline{L^{f}}%
\left( S\right)$ is finite-dimensional by showing that the following map is injective when restricted to some infinite-dimensional subspace of its domain. $$
\zeta :=\left( {\overline{\epsilon }}_{\overline{\Lambda }\overline{L^{f}}%
}\circ \overline{L}\,\overline{\Lambda }\overline{\eta ^{f}}\right) _{S}:\overline{L}\,\overline{%
\Lambda }\left( S\right) \rightarrow \overline{\Lambda }\overline{L^{f}}%
\left( S\right) .$$
It is easy to check that the obvious chain of isomorphisms $\Alg\left( \overline{\Lambda }\overline{L^{f}}(S),\overline{\Lambda }%
(B)\right)  \cong \Alg^{f}\left( \overline{L^{f}}(S),B\right) \cong
\Alg^{f}\left( S,\overline{R^{f}}(B)\right) \cong \Alg\left(
\overline{\Lambda }(S),\overline{\Lambda }\overline{R^{f}}(B)\right) =\mathsf{Alg%
}\left( \overline{\Lambda }(S),\overline{R}\,\overline{\Lambda }(B)\right) \cong
\Alg\left( \overline{L}\,\overline{\Lambda }(S),\overline{\Lambda }%
(B)\right)$ is exactly $\Alg\left( \zeta,\overline{\Lambda }%
(B)\right)$ so that the latter is invertible for every $B\in \Alg^{f}$.
Since $ k\left[ \left[ Y\right] \right] $ is the inverse limit of $\overline{\Lambda }\left(\frac{k\left[
Y\right] }{\left( Y^{n}\right) }\right),$ we have that $\Alg\left( \zeta,k\left[ \left[ Y\right] \right]\right)\cong \Alg\left( \zeta,\underleftarrow{\lim} \overline{\Lambda }\left(\frac{k\left[
Y\right] }{\left( Y^{n}\right) }\right)\right)\cong \underleftarrow{\lim}\Alg\left( \zeta, \overline{\Lambda }\left(\frac{k\left[
Y\right] }{\left( Y^{n}\right) }\right)\right)$ is invertible too. We now construct the diagram
\begin{equation*}
  \xymatrix{k\left[ Y\right]\ar@{^(->}[r]^\gamma\ar@{^(->}[dr]_\tau & \overline{L}\,\overline{\Lambda }(S)\ar[r]^\zeta\ar[d]_{\pi}& \overline{\Lambda }\overline{L^{f}}(S)\ar@{.>}[dl]^{\beta}\\
  &k\left[ \left[ Y\right] \right]
  }
\end{equation*}

Consider the following maps

\begin{itemize}
\item $\pi :\overline{L}\overline{\Lambda}\left( S\right) =\frac{k\left\{ X,Y\right\} }{%
\left( X^{2},XY+YX\right) }\longrightarrow k\left[ \left[ Y\right] \right]:\overline{X}\mapsto 0;\overline{Y}\mapsto \overline{Y}.$
\item $\gamma:k[Y]\hookrightarrow \frac{k\left\{ X,Y\right\} }{%
\left( X^{2},XY+YX\right) }$ and $\tau=\pi\circ\gamma :k[Y]\hookrightarrow k[[Y]]$ are the canonical injections.

\end{itemize}

Since $\Alg\left( \zeta,k[[Y]]\right)$ is invertible, there is a unique $\beta \in \Alg\left(
\overline{L}\,\overline{\Lambda}\left( S\right) ,K[[Y]]\right) $ such that $\beta \circ \zeta =\pi $.
Now we compute $ \beta \circ
\zeta \circ \gamma =\pi \circ \gamma = \tau .$  Thus, since $\tau$ is injective, so is  $%
\zeta \circ \gamma $ and we obtain that $\overline{\Lambda }\overline{L^{f}}\left(
S\right) $ contains a copy of $k[Y]$, which implies that $\overline{\Lambda }\overline{L^{f}}\left(
S\right) $ is not finite-dimensional. This is a contradiction.
\end{example}

\begin{remark}
With respect to the ``liftability'' terminology, it seems opportune to mention some related work, carried out by Porst and Street in \cite{PS}.
\\In Section 3 of loc. cit., the authors assume $\overline{R}$ to admit a left adjoint $\overline{L}$ and are concerned with investigating which of the properties of Sweedler's finite dual functor $(-)^{\circ}$ might be shared by $\overline{L}$. We note that they also use a notion of ``liftability'' (Definition 14 in loc. cit.) which does not coincide with the notion of a liftable pair of functors as in Definition \ref{def:liftable} here above.
%
%
\\It is also instructive to remark that, in Section 3.3.2 of \cite{PS}, the authors study symmetric monoidal functors, obtaining the following result (item 1 of Proposition 33 in their article). Let $\Aa$ and $\Bb$ be symmetric monoidal closed categories and $R:\Aa\rightarrow \Bb$ be a symmetric lax
monoidal functor having a left adjoint $L$ such that $\oR$ has a left adjoint. Assuming that $\Bb$ is locally presentable, $\uoL: \Bialg({\Bb})\to \Bialg({\Aa})$ has a right adjoint.
\end{remark}

\subsection{Liftability of the functor computing pre-duals} Having recalled the theory of liftable functors, starting from a pre-rigid braided monoidal category $\Cc$, which is not necessarily closed, we aim to construct a self-adjoint (on the right) functor $(-)^{*}: \Cc^{\op }\rightarrow \Cc$, in Proposition \ref{prop:prerigid}, and afterwards to provide sufficient conditions to obtain a liftable adjunction from it. An occurrence of this situation is the case $\Cc=\Vec$, expounded in \cite[Section 3]{GV-OnTheDuality}. This example, however, is closed monoidal. The first part of the following result is \cite[Proposition 4.2]{GV-OnTheDuality}: there is however some difference in the proof, which is explained in Remark \ref{rem:GV4.2}.

\begin{proposition}\label{prop:prerigid}
When $\Cc$ is a pre-rigid braided monoidal category, the assignment $X\mapsto X^*$ induces a functor $R=(-)^{*}:\Cc^{\op}\rightarrow \Cc$ with a left adjoint $L=R^{\op}=(-)^{*}:\Cc\rightarrow \Cc^{\op}.$ Moreover there are $\phi _{2},\phi _{0}$ such that  $\left( R,\phi _{2},\phi _{0}\right) $ is lax monoidal and the induced colax monoidal structure on $L$ by  \eqref{form:PsiFromPhi2} and \eqref{form:PsiFromPhi0} is specifically $(\phi _{2}^{\op},\phi _{0}^{\op})$.
\end{proposition}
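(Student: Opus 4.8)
The plan is to establish the four assertions in turn, reserving the real work for the last one. First, the functor $R=(-)^{*}:\Cc^{\op}\to\Cc$ is already supplied by Lemma \ref{lem:contravariant}, so nothing new is needed there. For the adjunction I would invoke self-adjointness: since $\Cc$ is braided, the braiding $c$ gives a natural isomorphism $\hom_{\Cc}(X\otimes Y,\unit)\cong\hom_{\Cc}(Y\otimes X,\unit)$ (precompose with $c_{Y,X}$), which is exactly condition $(2)$ of Proposition \ref{pro:adjprig}; hence condition $(1)$ holds, i.e. $\hom_{\Cc}(Y,X^{*})\cong\hom_{\Cc}(X,Y^{*})$ naturally in $X$ and $Y$. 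By definition this says that $(-)^{*}$ is self-adjoint on the right, that is $L=R^{\op}\dashv R$; unwinding $\hom_{\Cc^{\op}}((X^{*})^{\op},Y^{\op})=\hom_{\Cc}(Y,X^{*})$ confirms that $L$ is the functor $X\mapsto(X^{*})^{\op}$.

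For the lax monoidal structure I would define $\phi_{2}$ and $\phi_{0}$ through the universal property of pre-duals. Set $\phi_{0}:\unit\to\unit^{*}$ to be the canonical isomorphism of Corollary \ref{cor:uniqueness}, and let $\phi_{2}(X^{\op},Y^{\op}):X^{*}\otimes Y^{*}\to(X\otimes Y)^{*}$ be the unique morphism whose image under \eqref{maprerig} is $(\ev_{X}\otimes\ev_{Y})\circ(X^{*}\otimes c_{Y^{*},X}\otimes Y)$, the braiding being used to bring $Y^{*}$ past $X$. The lax coherence axioms then amount to equalities of two morphisms $X^{*}\otimes Y^{*}\otimes Z^{*}\to(X\otimes Y\otimes Z)^{*}$ (resp. the unit triangles); by the uniqueness in the universal property of pre-duals (injectivity of \eqref{maprerig}) it suffices to compare their images under \eqref{maprerig}, and these reduce to a single map $X^{*}\otimes Y^{*}\otimes Z^{*}\otimes X\otimes Y\otimes Z\to\unit$ built from $\ev_{X}\otimes\ev_{Y}\otimes\ev_{Z}$ after a braiding rearrangement. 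The two rearrangements coincide by the hexagon axioms and naturality of $c$, together with \eqref{form:dual}. This recovers the first part, \cite[Proposition 4.2]{GV-OnTheDuality}.

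The genuinely new point, and the hard part, is the last one: identifying the colax structure that \eqref{form:PsiFromPhi2}--\eqref{form:PsiFromPhi0} force on $L$ with $(\phi_{2}^{\op},\phi_{0}^{\op})$. I would first compute the unit and counit of the self-adjunction explicitly from the isomorphism of Proposition \ref{pro:adjprig} by evaluating the bijection on identities; this gives $\eta_{X}=(\ev_{X}\circ c_{X,X^{*}})^{\dag}:X\to X^{**}$ and a symmetric formula for $\epsilon$. One route is then to substitute these, together with the explicit $\phi_{2}$ above, into \eqref{form:PsiFromPhi2}; since the target of $\psi_{2}(X,Y)$ lies in $\Cc^{\op}$, the claimed equality $\psi_{2}=\phi_{2}^{\op}$ becomes, read back in $\Cc$, an identity of morphisms $(X\otimes Y)^{*}\to X^{*}\otimes Y^{*}$ that I would verify by the pre-rigid universal property and the triangle identities. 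A cleaner route exploits that \eqref{form:PhiFromPsi2}--\eqref{form:PhiFromPsi0} and \eqref{form:PsiFromPhi2}--\eqref{form:PsiFromPhi0} are mutually inverse bijections between lax structures on $R$ and colax structures on $L$ (\cite[Proposition 3.84]{Aguiar-Mahajan}): it then suffices to feed $(\phi_{2}^{\op},\phi_{0}^{\op})$ into \eqref{form:PhiFromPsi2} and check that $(\phi_{2},\phi_{0})$ is recovered, which again reduces via \eqref{maprerig} to a comparison of maps into $\unit$.

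I expect the main obstacle to be precisely this final identification: not the existence of a colax structure on $L$, but pinning it down on the nose as the opposite of $\phi$. The difficulty is essentially bookkeeping -- tracking objects and morphisms through $\Cc$ versus $\Cc^{\op}$, keeping the direction of each instance of the braiding correct, and matching the double-dual maps $\eta,\epsilon$ against the definition $\phi_{2}^{\op}(X^{\op},Y^{\op})=\phi_{2}(X,Y)^{\op}$. I therefore expect the smoothest execution to route every intermediate equality through the injectivity of $(-)^{\dag}$, so that each identity of morphisms is reduced to an identity of maps into $\unit$, where the braiding coherence and \eqref{form:dual} do the work.
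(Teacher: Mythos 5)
Your proposal follows essentially the same route as the paper's proof: the self-adjunction is obtained from Proposition \ref{pro:adjprig} via the braiding, $\phi_{2}$ and $\phi_{0}$ are defined through the universal property \eqref{maprerig}, and the final identification $\psi_{2}=\phi_{2}^{\op}$, $\psi_{0}=\phi_{0}^{\op}$ is verified exactly as you suggest, by applying the bijection \eqref{maprerig} (injectivity of $(-)^{\dag}$) to both sides. The only caveat is your choice of $c_{Y^{*},X}$ where the paper uses $(c_{X,Y^{*}})^{-1}$ in the definition of $\phi_{2}$: these differ in a non-symmetric braided category, and the final identification is sensitive to pairing that choice consistently with the unit $\eta_{X}=(\ev_{X}\circ c_{X,X^{*}})^{\dag}$ and the counit $j_{X}=(\ev_{X}\circ(c_{X^{*},X})^{-1})^{\dag}$ — your proposed reduction through $(-)^{\dag}$ is precisely what would surface and force the correct convention.
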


\begin{proof}Let $\Cc$ be a pre-rigid braided monoidal category. Since $\Cc$ has a braiding, we can apply Proposition \ref{pro:adjprig} to get a bijection $\hom _{\Cc}\left( Y,X^{\ast }\right) \cong \hom _{\Cc}\left( X,Y^{\ast }\right) $ natural both in $X$ and $Y$, whence the claimed adjunction. In order to write it explicitly, note that for every morphism $t:T\otimes X\rightarrow \unit$ there is a
unique morphism $t^\dag:T\rightarrow X^{\ast }$ such that $t=\ev%
_{X}\circ \left( t^\dag\otimes X\right) .$ Set
\begin{align*}
\eta_{X} &:=({\ev_{X}\circ c_{X,X^{\ast }}})^\dag : X\rightarrow
X^{\ast \ast }, \\
j_{X} &:=({\ev_{X}\circ \left( c_{X^{\ast },X}\right) ^{-1}})^\dag %
:X\rightarrow X^{\ast \ast }.
\end{align*}%
 Equivalently%
\begin{eqnarray}
\ev_{X}\circ c_{X,X^{\ast }} &=&\ev_{X^{\ast }}\circ \left(
\eta_{X}\otimes X^{\ast }\right) , \label{form:etaX}\\
\ev_{X}\circ \left( c_{X^{\ast },X}\right) ^{-1} &=&\ev%
_{X^{\ast }}\circ \left( j_{X}\otimes X^{\ast }\right)\label{form:jX} .
\end{eqnarray}%
By Lemma \ref{lem:contravariant} we have a functor $R=(-)^{*}:\Cc^{\op}\rightarrow \Cc$
defined by $R(X^{\op }):=X ^{\ast }$ and $R(f^{\op }):=f ^{\ast }$.
Then $(L=R^{\op },R,\eta,\epsilon)$ is an adjunction, where we set ${\epsilon }_{X^{\op }}=\left(j_{X}\right) ^{\op }$.

Define $\varphi _{2}\left( X,Y\right) :=({\left( \ev%
_{X}\otimes \ev_{Y}\right) \circ ( X^{\ast }\otimes \left(
c_{X,Y^{\ast }}\right) ^{-1}\otimes Y) })^\dag :X^{\ast }\otimes Y^{\ast
}\rightarrow \left( X\otimes Y\right) ^{\ast }$, i.e. the morphism that
corresponds to $\left( \ev%
_{X}\otimes \ev_{Y}\right) \circ ( X^{\ast }\otimes \left(
c_{X,Y^{\ast }}\right) ^{-1}\otimes Y)$
via the bijection
\begin{equation}\label{mapvaphi}
\hom _{\Cc}\left( X^{\ast }\otimes Y^{\ast },\left( X\otimes
Y\right) ^{\ast }\right) \overset{\cong }{\longrightarrow }\hom _{%
\Cc}\left( X^{\ast }\otimes Y^{\ast }\otimes X\otimes Y,\unit%
\right) .
\end{equation}%
Define $\phi _{0}:\unit\rightarrow %
\unit^{\ast }$ by $\phi _{0}=(m_{\unit})^\dag$, i.e. such that $\ev_{\unit}\circ \left( \phi _{0}\otimes \unit\right)
=m_{\unit}$, and define $\phi _{2}\left( X^{\op},Y^{\op}\right) :=\varphi _{2}\left( X,Y\right)$.
It is straightforward to check that $\left( R,\phi _{2},\phi _{0}\right) $ is lax monoidal. Now by \eqref{form:PsiFromPhi2} and \eqref{form:PsiFromPhi0}, we know that $\left( L,\psi _{2},\psi
_{0}\right) $ is colax monoidal where $\psi _{2}\left( X,Y\right) =\epsilon
_{\left( LX\otimes LY\right) }\circ L\phi _{2}\left( LX,LY\right) \circ
L\left( \eta _{X}\otimes \eta _{Y}\right) $ and $\psi _{0}=\epsilon _{%
\unit^{\op}}\circ L \phi _{0} .$ We compute
\begin{eqnarray*}
\psi _{2}\left( X,Y\right) ^{\op} &=&\left[ L\left( \eta _{X}\otimes \eta _{Y}\right) \right] ^{\op}\circ \left[
L\phi _{2}\left( LX,LY\right) \right] ^{\op}\circ \left[ \epsilon _{\left(
LX\otimes LY\right) }\right] ^{\op} \\
&=&\left( \eta_{X}\otimes \eta_{Y}\right)^* \circ (\varphi _{2}\left( X^*,Y^*\right))^*
\circ j_{X^*\otimes Y^*} \overset{(*)}{=}\varphi _{2}\left( X,Y\right) =\phi _{2}\left( X^{\op},Y^{\op}\right),
\end{eqnarray*}%
where $(*)$ can be checked by applying the bijection (\ref{mapvaphi}) on both sides. Finally, $
 \psi _{0} ^{\op} =\left( L \phi _{0} %
\right) ^{\op}\circ \left( \epsilon _{\unit^{\op}}\right) ^{\op}=\left( \phi _{0}\right)^* \circ j_{\unit}=\phi _{0}
$, where the last equality follows by applying the bijection $\hom _{\Cc}\left( \unit,\unit^*\right)\to \hom _{\Cc}\left( \unit\otimes \unit,\unit\right)$, $u\mapsto \ev_{\unit}\circ \left( u\otimes \unit\right)$, on both sides.
\end{proof}

\begin{remark}\label{rem:GV4.2}
\cite[Proposition 4.2]{GV-OnTheDuality} asserts that if $\Aa$ is a pre-rigid braided monoidal category, then $(-)^*: \Aa^{\op}\rightarrow \Aa$ is a self-adjoint covariant functor. Although the assertion is true for general pre-rigid braided monoidal categories (as shown in the above Proposition \ref{prop:prerigid}), the proof is erroneously communicated in loc. cit.. Indeed, the last sentence of the argument appearing in the printed version of the above-cited proposition only works in case the braiding is moreover symmetric (notice this does not harm the conclusions of the work carried out in loc.cit., as all involved braidings there {\it are} symmetric) as, in general, the unit and counit are given by different underlying morphisms, see above (note that a functor $F:\Aa^{\op}\to \Aa$ such that $F^\op\dashv F$ where the unit and the counit are given by the same underlying morphism is sometimes called a ``self-dual adjunction'' in the literature). The requirement that the braiding is symmetric has been added in \cite[Proposition 4.2]{GV-OnTheDuality-rev}. We point out that, even if $\Cc$ is rigid, in general we cannot conclude that the unit and the counit are given by the same underlying morphism unless $\ev_{X}\circ c_{X,X^{\ast }}\circ  c_{X^{\ast },X} =\ev_{X}$ for every object $X$, in view of the equalities \eqref{form:etaX} and \eqref{form:jX}.
\end{remark}

In Proposition \ref{prop:prerigid}, the functor $R=(-)^{*}:\Cc^{\op}\rightarrow \Cc$ is proved to be self-adjoint on the right. Notice that the unit and counit do not share the same underlying morphism here. Moreover, the induced colax monoidal structure on $L=R^\op$ by  \eqref{form:PsiFromPhi2} and \eqref{form:PsiFromPhi0} is specifically $(\phi _{2}^{\op},\phi _{0}^{\op})$, where $(\phi _{2},\phi _{0})$ is the lax monoidal structure of $R$. The last property seems to be a particular feature of $(-)^{*}$ as we cannot prove in general that it holds true for an arbitrary functor $R$ which is self-adjoint on the right. Next aim is to show that, when it holds true, then $(L,R)$ is liftable whenever $\overline{R}$ has a left adjoint. Of course this will be applied to  examine whether the pair $((-)^{*\op}, (-)^*)$ is liftable.

To this aim recall that an adjunction $(L,R,\eta,\varepsilon)$ gives rise to an adjunction $(R^\op ,L^\op ,\varepsilon^\op ,\eta^\op )$.

%
\begin{proposition}
\label{lem:Barop} For a monoidal category $\Cc$, suppose a lax monoidal functor $\left( R,\phi _{2},\phi _{0}\right) :\Cc^{\op}\to \Cc$ has a left adjoint $L=R^{\op}$. If the induced colax monoidal structure on $L$ by  \eqref{form:PsiFromPhi2} and \eqref{form:PsiFromPhi0} is specifically $(\phi _{2}^{\op},\phi _{0}^{\op})$, then $\overline{R}=\left( \underline{L}\right) ^{\op}$.  Moreover, if $\overline{R}$ has a left adjoint, then $\left( L,R\right) $ is liftable.
\end{proposition}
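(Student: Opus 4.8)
The plan is to establish the functorial identity $\overline{R}=(\underline{L})^{\op}$ first, and then read off liftability as a one-line consequence of the duality between left and right adjoints under passage to opposite categories.

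First I would record the canonical isomorphisms of categories $\Alg(\Cc^{\op})\cong\Coalg(\Cc)^{\op}$ and $\Coalg(\Cc^{\op})\cong\Alg(\Cc)^{\op}$: a coalgebra $(C,\Delta,\varepsilon)$ in $\Cc$ is exactly the same datum as the algebra $(C^{\op},\Delta^{\op},\varepsilon^{\op})$ in $\Cc^{\op}$, using $C^{\op}\otimes^{\op}C^{\op}=(C\otimes C)^{\op}$ and the unit $\unit^{\op}$, with morphisms reversed. Through these identifications both $\overline{R}\colon\Alg(\Cc^{\op})\to\Alg(\Cc)$ and $(\underline{L})^{\op}\colon\Coalg(\Cc)^{\op}\to\Coalg(\Cc^{\op})^{\op}$ become functors $\Coalg(\Cc)^{\op}\to\Alg(\Cc)$, so the asserted equality is between functors of the same variance and domain.

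Next I would verify the equality on objects. Applied to the algebra $(C^{\op},\Delta^{\op},\varepsilon^{\op})$, the explicit formula for $\overline{R}$ produces the algebra with underlying object $R(C^{\op})$, multiplication $R(\Delta^{\op})\circ\phi_2(C^{\op},C^{\op})$, and unit $R(\varepsilon^{\op})\circ\phi_0$. On the other side, $\underline{L}(C)$ is the coalgebra in $\Cc^{\op}$ with underlying object $L(C)=(R(C^{\op}))^{\op}$, comultiplication $\psi_2(C,C)\circ L(\Delta)$, and counit $\psi_0\circ L(\varepsilon)$; taking the opposite sends it to an algebra in $\Cc$ with underlying object $R(C^{\op})$ whose multiplication and unit are the opposites of these structure maps. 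Here the hypothesis enters decisively: since $\psi_2=\phi_2^{\op}$ and $\psi_0=\phi_0^{\op}$, and since $L=R^{\op}$ acts on morphisms by $L(f)=(R(f^{\op}))^{\op}$, unwinding the opposites and using $\alpha^{\op}\circ\beta^{\op}=(\beta\circ\alpha)^{\op}$ turns $\psi_2(C,C)\circ L(\Delta)$ into $(R(\Delta^{\op})\circ\phi_2(C^{\op},C^{\op}))^{\op}$ and $\psi_0\circ L(\varepsilon)$ into $(R(\varepsilon^{\op})\circ\phi_0)^{\op}$. Hence the two algebra structures coincide. On morphisms both functors act by applying $R$ with the evident op-bookkeeping, so they agree there as well, giving $\overline{R}=(\underline{L})^{\op}$.

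Finally, for liftability I would invoke the fact recalled just before the statement, namely that an adjunction $(L,R,\eta,\varepsilon)$ gives rise to $(R^{\op},L^{\op},\varepsilon^{\op},\eta^{\op})$. By hypothesis $\overline{R}$ admits a left adjoint $\overline{L}$, i.e. $\overline{L}\dashv\overline{R}$; passing to opposites yields $(\overline{R})^{\op}\dashv(\overline{L})^{\op}$. Since $(\overline{R})^{\op}=((\underline{L})^{\op})^{\op}=\underline{L}$, this exhibits $(\overline{L})^{\op}$ as a right adjoint of $\underline{L}$. Thus $\overline{R}$ has a left adjoint and $\underline{L}$ has a right adjoint, which is precisely the meaning of $(L,R)$ being liftable in Definition \ref{def:liftable}. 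The only delicate point is the object-level computation in the preceding paragraph, which requires careful tracking of the opposite conventions for $\otimes^{\op}$, for the opposite functor $L=R^{\op}$, and for $\phi_2^{\op},\phi_0^{\op}$; the hypothesis $(\psi_2,\psi_0)=(\phi_2^{\op},\phi_0^{\op})$ is exactly what forces the lax-algebra structure produced by $\overline{R}$ to agree with the opposite of the colax-coalgebra structure produced by $\underline{L}$.
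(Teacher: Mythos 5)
Your proposal is correct and follows essentially the same route as the paper: identify the domains and codomains via $(\Coalg(\Cc))^{\op}=\Alg(\Cc^{\op})$ and $(\Coalg(\Cc^{\op}))^{\op}=\Alg(\Cc)$, use the hypothesis $(\psi_2,\psi_0)=(\phi_2^{\op},\phi_0^{\op})$ to match the two functors on objects (and on morphisms, where the paper phrases the check via the forgetful functors and their faithfulness, but the content is the same), and then obtain liftability by passing to the opposite of the adjunction $\overline{L}\dashv\overline{R}$ to exhibit a right adjoint of $\underline{L}=(\overline{R})^{\op}$. Your object-level unwinding, which the paper leaves as a "straightforward" check, is carried out correctly.
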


\begin{proof}
We check that $\left( \underline{L}\right) ^{\op}=\overline{R}$.
First observe that the domain and codomain of $\left( \underline{L}\right) ^{%
\op}$ are respectively
\begin{equation*}
\left( \Coalg (\Cc)\right) ^{\op}=\Alg\left(\Cc^{\op}\right) \qquad \text{and}\qquad \left( \Coalg\left( \Cc^{\op}\right) \right) ^{\op}=\Alg\left( \Cc\right)
\end{equation*}%
so that the domain and codomain of $\left( \underline{L}\right) ^{\op%
}$ and $\overline{R}$ are the same.
Next, by means of the equality $\left(L,\psi _{2},\psi _{0}\right)=\left( R^{\op},\phi _{2}^{\op},\phi _{0}^{\op}\right)$, one checks, in a straightforward fashion, that $\left( \underline{L}\right) ^{\op}$ and $\overline{R}$ coincide on objects.
They also agree on morphisms (whence $\left(
\underline{L}\right) ^{\op}=\overline{R}$) by the following computation
$$\Omega _{\Cc}\circ \left( \underline{L}\right) ^{\op}=\left( \mho _{\Cc^{\op%
}}\right) ^\op \circ \left( \underline{L}\right) ^{\op}
=\left(  \mho _{\Cc^{\op}}\circ \underline{L} \right)^\op=\left(  L\circ \mho _{\Cc} \right)^\op
=L^\op\circ \left(\mho _{\Cc} \right)^\op
=R\circ \Omega _{\Cc^\op}=\Omega _{\Cc}\circ \overline{R}
$$ together with the faithfulness of $\Omega _{\Cc}$.
We now prove the final sentence of the statement. Assume $\overline{R}$ has a left adjoint $\overline{L}$. Thus we have the adjunction $((\overline{R})^{\op},(\overline{L})^{\op})$. Now, by the first part, we have that $\left( \underline{L}\right) ^{\op}=\overline{R}$ and hence  $\underline{L}=(\overline{R})^{\op}$. Thus $\underline{L}$ has a right adjoint and hence $\left( L,R\right) $ is liftable.
\end{proof}

\begin{corollary}\label{coro:Isar}
Let $\Cc$ be a pre-rigid braided monoidal category. If $\overline{(-)^{*}}:\Alg(\Cc^{\op})\to \Alg(\Cc)$ has a left  adjoint, then $\left( (-)^{*}:\Cc\rightarrow \Cc^{\op},(-)^{*}:\Cc^{\op}\rightarrow \Cc\right)$ is a liftable pair of adjoint functors.
\end{corollary}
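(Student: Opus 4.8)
The plan is to realize this corollary as a direct specialization of Proposition \ref{lem:Barop} to the functor $R=(-)^{*}:\Cc^{\op}\to\Cc$. First I would invoke Proposition \ref{prop:prerigid}: since $\Cc$ is pre-rigid and braided, that result supplies a lax monoidal structure $(\phi_2,\phi_0)$ on $R=(-)^{*}$, exhibits $L=R^{\op}=(-)^{*}:\Cc\to\Cc^{\op}$ as a left adjoint of $R$, and---this is the decisive point---shows that the colax monoidal structure induced on $L$ via \eqref{form:PsiFromPhi2} and \eqref{form:PsiFromPhi0} is exactly $(\phi_2^{\op},\phi_0^{\op})$. These are precisely the three standing hypotheses required by Proposition \ref{lem:Barop}.

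Next I would match the remaining assumption. The functor $\overline{(-)^{*}}:\Alg(\Cc^{\op})\to\Alg(\Cc)$ appearing in the statement is by definition $\overline{R}=\Alg(R)$, so the hypothesis that $\overline{(-)^{*}}$ admits a left adjoint is verbatim the extra requirement appearing in the ``moreover'' clause of Proposition \ref{lem:Barop}. There is thus nothing further to verify before the two inputs can be combined.

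Finally, applying Proposition \ref{lem:Barop} yields at once that the pair $(L,R)=\left((-)^{*}:\Cc\to\Cc^{\op},\,(-)^{*}:\Cc^{\op}\to\Cc\right)$ is liftable. All substantive content---most notably the identification $\overline{R}=(\underline{L})^{\op}$ that transports a left adjoint of $\overline{R}$ into a right adjoint of $\underline{L}$---is already packaged inside Proposition \ref{lem:Barop}, so no genuine obstacle remains; the corollary is a pure bookkeeping specialization. The only subtlety worth flagging is confirming that the colax structure $(\phi_2^{\op},\phi_0^{\op})$ produced by Proposition \ref{prop:prerigid} is indeed the one that Proposition \ref{lem:Barop} assumes to be induced by the adjunction, but this is exactly the equality established in the closing computation of the proof of Proposition \ref{prop:prerigid}.
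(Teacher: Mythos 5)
Your proposal is correct and coincides with the paper's own proof, which simply cites Proposition \ref{prop:prerigid} (supplying the self-adjoint functor $R=(-)^{*}$ with the required lax/colax compatibility) followed by Proposition \ref{lem:Barop}. The verification that the induced colax structure on $L$ is $(\phi_2^{\op},\phi_0^{\op})$ is indeed the only point needing care, and it is established in Proposition \ref{prop:prerigid} exactly as you note.
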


\begin{proof}
It follows by Proposition \ref{prop:prerigid} and Proposition \ref{lem:Barop}.
\end{proof}

Recall that an adjunction $(L,R)$ between two lax monoidal functors $L$  and $R$ is called a \emph{monoidal adjunction} whenever the unit and the counit of the adjunction are monoidal natural transformations. The following result allows to transfer the condition required in Corollary \ref{coro:Isar} to have liftability from a pre-rigid braided monoidal category $\Mm$ to another one $\Nn$ whenever these categories are connected by a suitable monoidal adjunction $L\dashv R:\Mm\to\Nn$.

\begin{proposition}
\label{pro:monadj}Let $\Mm$ and $\Nn$ be braided monoidal categories. Assume that $\Mm$ is pre-rigid  and that there is a monoidal adjunction $L\dashv R:\Mm\to\Nn$ with $L$ and $R$ both strict monoidal and $L$ braided monoidal. Then $\Nn$ is pre-rigid, with pre-dual $N^*=R((LN)^*)$, for every object $N$ in $\Nn$. If the
assumption in Corollary \ref{coro:Isar} holds for $\Mm$, then the
analogous conclusion holds for  $\Nn$.
\end{proposition}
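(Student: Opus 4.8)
The plan is to establish the two assertions in turn: the pre-rigidity of $\Nn$ is a direct instance of Proposition \ref{pro:prigadj}, and the liftability statement for $\Nn$ then reduces to that for $\Mm$ by factoring the pre-dual functor through the adjunction. For the first assertion, I would note that since $R$ is strict monoidal one has $RL(\unit_\Nn)=R(\unit_\Mm)=\unit_\Nn$, so in particular $\unit_\Nn\cong RL(\unit_\Nn)$. As $L$ is strict (hence strong) monoidal and $(L,R)$ is an adjunction, the hypotheses of Proposition \ref{pro:prigadj} are met, and that proposition gives at once that $\Nn$ is pre-rigid with pre-dual $N^{\ast}:=R((LN)^{\ast})$, where $(LN)^{\ast}$ is the pre-dual computed in $\Mm$; this is the asserted formula.

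For the second assertion, write $R_\Mm:=(-)^{\ast}:\Mm^{\op}\to\Mm$ and $R_\Nn:=(-)^{\ast}:\Nn^{\op}\to\Nn$ for the pre-dual functors of Proposition \ref{prop:prerigid}, both available since $\Mm$ and $\Nn$ are pre-rigid braided. The first part already shows that on objects $R_\Nn=R\circ R_\Mm\circ L^{\op}$, with $L^{\op}:\Nn^{\op}\to\Mm^{\op}$, and I would upgrade this to an equality of lax monoidal functors. On objects and morphisms it follows from the construction in Proposition \ref{pro:prigadj} together with Lemma \ref{lem:contravariant}, once the evaluation $\ev^{\Nn}_N\colon R((LN)^{\ast})\otimes N\to\unit_\Nn$ is chosen to be the mate of $\ev^{\Mm}_{LN}$, i.e. $L(\ev^{\Nn}_N)=\ev^{\Mm}_{LN}\circ(\epsilon_{(LN)^{\ast}}\otimes LN)$. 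For the lax structures, strictness of $L$ and $R$ makes the composite carry $R(\phi_2^{\Mm})$ and $R(\phi_0^{\Mm})$, and comparing these with $(\phi_2^{\Nn},\phi_0^{\Nn})$ from Proposition \ref{prop:prerigid} reduces, after post-composing with the relevant evaluation, to the identities $L(c^{\Nn})=c^{\Mm}$ and the mate relation above. This verification — matching the two lax monoidal structures through the mates of the evaluations and the braidedness of $L$ — is the main obstacle, although it is only a diagram chase.

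Granting the identity $R_\Nn=R\circ R_\Mm\circ L^{\op}$ of lax monoidal functors, functoriality of $\Alg(-)$ yields $\overline{R_\Nn}=\Alg(R)\circ\Alg(R_\Mm)\circ\Alg(L^{\op})$, and I would produce a left adjoint of $\overline{R_\Nn}$ by assembling left adjoints of the three factors. Indeed $\Alg(R)$ has left adjoint $\Alg(L)$ because the monoidal adjunction $L\dashv R$ lifts to an adjunction between the categories of algebra objects; $\Alg(R_\Mm)=\overline{(-)^{\ast}}$ has a left adjoint $\overline{L_\Mm}$ precisely by the hypothesis that the assumption of Corollary \ref{coro:Isar} holds for $\Mm$; and $\Alg(L^{\op})$ has left adjoint $\Alg(R^{\op})$, since passing to opposites turns $L\dashv R$ into a monoidal adjunction $R^{\op}\dashv L^{\op}$ of strict monoidal functors, which again lifts to the categories of algebra objects. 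Composing these in reverse order gives $\Alg(R^{\op})\circ\overline{L_\Mm}\circ\Alg(L)$ as a left adjoint of $\overline{R_\Nn}$. Hence the assumption of Corollary \ref{coro:Isar} holds for the pre-rigid braided category $\Nn$, and that corollary delivers the analogous conclusion, namely that $\big((-)^{\ast}\colon\Nn\to\Nn^{\op},\,(-)^{\ast}\colon\Nn^{\op}\to\Nn\big)$ is a liftable pair of adjoint functors.
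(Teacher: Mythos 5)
Your proposal is correct and follows essentially the same route as the paper: the first assertion via Proposition \ref{pro:prigadj} using $RL(\unit_\Nn)=\unit_\Nn$, and the second by identifying $(-)^{\ast}:\Nn^{\op}\to\Nn$ with $R\circ(-)^{\ast}\circ L^{\op}$ as lax monoidal functors and then composing the left adjoints $\Alg(R^{\op})$, $\overline{L_{\Mm}}$ and $\Alg(L)$ of the three induced functors on algebras. The one verification you flag as the main obstacle (matching the two lax monoidal structures via the monoidality of the unit and counit and the braidedness of $L$) is exactly the computation the paper also defers to a routine check.
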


\begin{proof}Since $L$ is strict monoidal, it is in particular strong monoidal. Moreover, since $L$ and $R$ are strict monoidal we have $\unit_{\Nn}= R(\unit_{\Mm})=  RL(\unit_{\Nn})$. Thus we are in the setting of Proposition \ref{pro:prigadj} so that $\Nn$ is pre-rigid, with pre-dual $N^*=R((LN)^*)$, for every object $N$ in $\Nn$.

Now, consider the adjunctions
\begin{align*}
\left( L_{1},R_{1}\right) &=\left( (-)^{\ast }:\mathcal{%
M}\rightarrow \Mm^{\op },(-)^{\ast }:\Mm^{\op %
}\rightarrow \Mm\right) \\
\left( L_{2},R_{2}\right) &=\left( (-)^{\ast }:\Nn\rightarrow \Nn ^{\op %
},(-)^{\ast }:\Nn ^{\op %
}\rightarrow \Nn\right) .
\end{align*} as in the following diagram \begin{equation*}
\xymatrixcolsep{1.5cm}\xymatrix{{\Mm}^\op\ar@<.5ex>[r]^{R^\op}\ar@<.5ex>[d]^{R_1} & \Nn^\op
\ar@<.5ex>[d]^{R_2}\ar@<.5ex>[l]^{L^\op}\\
\Mm\ar@<.5ex>[u]^{L_1}\ar@<.5ex>[r]^{R}&\Nn\ar@<.5ex>[u]^{L_2}\ar@<.5ex>[l]^{L}}
\end{equation*}

By Proposition \ref{prop:prerigid}, we have the functors $\overline{R_{1}}=\Alg(R_1):{%
\Alg}(\Mm^{\op })\rightarrow {\Alg}(\mathcal{M%
})$ and $\overline{%
R_{2}}=\Alg(R_2):{\Alg}(\Nn ^{\op })\rightarrow {\Alg}(\Nn).$ Assume that $\overline{R_{1}}$ has a left adjoint, say $\overline{L_{1}}$, and let us check that  the functor $\overline{%
R_{2}}$ admits a left adjoint too. Since the functors $L$ and $R$ are strict monoidal, they are in particular lax monoidal whence they induce $\overline{L}=\Alg(L):{\Alg}(%
\Nn)\rightarrow {\Alg}(\Mm),$ $%
\overline{R}=\Alg(R):{\Alg}(\Mm)\rightarrow {\Alg}(\Nn)$ and by \cite[Proposition 3.91]{Aguiar-Mahajan}, we have  that $\overline{L}\dashv \overline{R}$. Since the functors $L^{\op }$ and $R^{\mathrm{%
op}}$ are also strict monoidal, we have the functors  $\overline{L^\op}=\Alg(L^\op):{\Alg}(\Nn ^{\op %
})\rightarrow {\Alg}(\Mm^{\op }),$ $\overline{R^{%
\op }}=\Alg(R^\op):{\Alg}(\Mm^{\op })\rightarrow {\mathsf{%
Alg}}(\Nn ^{\op })$. Note the $%
L\dashv R$ implies that ${R^{\op }}\dashv {L^{%
\op }}$ and hence $\overline{R^{\op }}\dashv \overline{L^{%
\op }}$. As a consequence $\overline{R\circ
R_{1}\circ L^{\op }}=\overline{R}\circ \overline{R_{1}}\circ
\overline{L^{\op }}$ has $\overline{R^{\op }}\circ \overline{%
L_{1}}\circ \overline{L}$ as a left adjoint. It remains to check that $\overline{R_{2}}=\overline{R\circ
R_{1}\circ L^{\op }}$. To this aim we have to check that $R_{2}$ and $R\circ
R_{1}\circ L^{\op }$ are the same as monoidal functor. For $N$ an object in $\Nn$, we have $R
R_{1} L^{\op }(N^\op)=RR_{1} ((LN)^\op)=R((LN)^*)=N^*=R_2(N^\op)$. The same holds on morphisms so that $R
\circ R_{1}\circ L^{\op }=R_2.$ Using the fact that the unit and the counit of the adjunction $(L,R)$ are monoidal natural transformations and that $L$ is braided monoidal, one easily checks that $R
\circ R_{1}\circ L^{\op }$ and $R_2$ have the same monoidal structure.
\begin{invisible}
  Let us check that $RR_{1}L^{\op }=R_{2}$ as monoidal functors i.e.
that $\phi _{2}^{RR_{1}L^{\op }}=\phi _{2}^{R_{2}}$ and that $\phi
_{0}^{RR_{1}L^{\op }}=\phi _{0}^{R_{2}}.$

If $L:\mathcal{N}\rightarrow \mathcal{M}$ is strict monoidal so is $L^{%
\op }:\mathcal{N}^{\op }\rightarrow \mathcal{M}^{\op }$
as%
\begin{eqnarray*}
L^{\op }X^{\op }\otimes L^{\op }Y^{\op }
&=&\left( LX\right) ^{\op }\otimes \left( LY\right) ^{\op %
}=\left( LX\right) ^{\op }\otimes \left( LY\right) ^{\op %
}=\left( LX\otimes LY\right) ^{\op }=\left( L\left( X\otimes Y\right)
\right) ^{\op } \\
&=&L^{\op }\left( X\otimes Y\right) ^{\op }=L^{\op %
}\left( X^{\op }\otimes Y^{\op }\right) , \\
L^{\op }I_{\mathcal{N}}^{\op } &=&\left( LI_{\mathcal{N}%
}\right) ^{\op }=I_{\mathcal{M}}^{\op }
\end{eqnarray*}%
Since also $R$ is strict monoidal, The monoidal structure of $RR_{1}L^{%
\op }$ is given by
\begin{eqnarray*}
\phi _{2}^{RR_{1}L^{\op }} &:&=\left( RR_{1}L^{\op }X^{\mathrm{%
op}}\otimes RR_{1}L^{\op }Y^{\op }=R\left( R_{1}L^{\op %
}X^{\op }\otimes R_{1}L^{\op }Y^{\op }\right) \overset{%
R\phi _{2}^{R_{1}}\left( L^{\op }X^{\op },L^{\op }Y^{%
\op }\right) }{\longrightarrow }RR_{1}\left( L^{\op }X^{%
\op }\otimes L^{\op }Y^{\op }\right) =RR_{1}L^{\mathrm{%
op}}\left( X^{\op }\otimes Y^{\op }\right) \right) , \\
\phi _{0}^{RR_{1}L^{\op }} &:&=\left( I_{\mathcal{N}}=RI_{\mathcal{M}}%
\overset{R\phi _{0}^{R_{1}}}{\longrightarrow }RR_{1}I_{\mathcal{M}}^{\mathrm{%
op}}=RR_{1}L^{\op }I_{\mathcal{N}}^{\op }\right) .
\end{eqnarray*}%
Since $R_{1}=\left( -\right) ^{\ast }:\mathcal{M}^{\op }\rightarrow
\mathcal{M}$, we can write explicitly $\phi _{2}^{R_{1}}\ $and $\phi
_{0}^{R_{1}}$. Explicitly%
\begin{equation*}
\phi _{2}^{R_{1}}\left( A^{\op },B^{\op }\right) =\varphi
_{2}^{R_{1}}\left( A,B\right) =\left[ \left( \mathrm{ev}_{A}\otimes \mathrm{%
ev}_{B}\right) \circ \left( A^{\ast }\otimes \left( c_{A,B^{\ast }}\right)
^{-1}\otimes B\right) \right] ^{\dag }
\end{equation*}%
i.e. $\varphi _{2}^{R_{1}}\left( A,B\right) $ is uniquely determined by the
equality%
\begin{equation*}
\mathrm{ev}_{A\otimes B}\circ \left( \varphi _{2}^{R_{1}}\left( A,B\right)
\otimes B\otimes A\right) =\left( \mathrm{ev}_{A}\otimes \mathrm{ev}%
_{B}\right) \circ \left( A^{\ast }\otimes \left( c_{A,B^{\ast }}\right)
^{-1}\otimes B\right) .
\end{equation*}%
In particular $\phi _{2}^{R_{1}}\left( L^{\op }X^{\op },L^{%
\op }Y^{\op }\right) =\phi _{2}^{R_{1}}\left( \left( LX\right)
^{\op },\left( LY\right) ^{\op }\right) =\varphi
_{2}^{R_{1}}\left( LX,LY\right) $ is uniquely determined by the equality
\begin{equation*}
\mathrm{ev}_{LX\otimes LY}\circ \left( \varphi _{2}^{R_{1}}\left(
LX,LY\right) \otimes LY\otimes LY\right) =\left( \mathrm{ev}_{LX}\otimes
\mathrm{ev}_{LY}\right) \circ \left( \left( LX\right) ^{\ast }\otimes \left(
c_{LX,\left( LY\right) ^{\ast }}\right) ^{-1}\otimes LY\right) .
\end{equation*}%
On the other hand since $R_{2}=\left( -\right) ^{\ast }:\mathcal{N}^{\mathrm{%
op}}\rightarrow \mathcal{N}$, we can write explicitly $\phi _{2}^{R_{2}}\ $%
and $\phi _{0}^{R_{2}}$. Explicitly  $\phi _{0}^{R_{2}}=\left( m_{I}\right)
^{\dag }$ while%
\begin{equation*}
\phi _{2}^{R_{2}}\left( X^{\op },Y^{\op }\right) =\varphi
_{2}^{R_{2}}\left( X,Y\right) =\left[ \left( \mathrm{ev}_{X}\otimes \mathrm{%
ev}_{Y}\right) \circ \left( X^{\ast }\otimes \left( c_{X,Y^{\ast }}\right)
^{-1}\otimes Y\right) \right] ^{\dag }
\end{equation*}%
i.e. $\varphi _{2}^{R_{2}}\left( X,Y\right) $ is uniquely determined by the
equality%
\begin{equation*}
\mathrm{ev}_{X\otimes Y}\circ \left( \varphi _{2}^{R_{2}}\left( X,Y\right)
\otimes X\otimes Y\right) =\left( \mathrm{ev}_{X}\otimes \mathrm{ev}%
_{Y}\right) \circ \left( X^{\ast }\otimes \left( c_{X,Y^{\ast }}\right)
^{-1}\otimes Y\right) .
\end{equation*}%
We have to check that $\phi _{2}^{RR_{1}L^{\op }}=\phi _{2}^{R_{2}}$
i.e. that $R\phi _{2}^{R_{1}}\left( L^{\op }X^{\op },L^{%
\op }Y^{\op }\right) =\phi _{2}^{R_{2}}\left( X^{\op %
},Y^{\op }\right) $ i.e. that $R\varphi _{2}^{R_{1}}\left(
LX,LY\right) =\varphi _{2}^{R_{2}}\left( X,Y\right) .$ Thus we have to check
that
\begin{equation*}
\mathrm{ev}_{X\otimes Y}\circ \left( R\varphi _{2}^{R_{1}}\left(
LX,LY\right) \otimes X\otimes Y\right) =\left( \mathrm{ev}_{X}\otimes
\mathrm{ev}_{Y}\right) \circ \left( X^{\ast }\otimes \left( c_{X,Y^{\ast
}}\right) ^{-1}\otimes Y\right) .
\end{equation*}%
By construction, for $X\in \mathcal{N}$ we have $X^{\ast }:=R\left( \left(
LX\right) ^{\ast }\right) $ and $\mathrm{ev}_{X}$ is uniquely determined
(easly checked) by%
\begin{equation*}
X^{\ast }\otimes X\overset{\eta _{X^{\ast }\otimes X}^{R}}{\longrightarrow }%
RL\left( X^{\ast }\otimes X\right) \overset{R\psi _{2}^{L}\left( X^{\ast
},X\right) }{\longrightarrow }R\left( L\left( X^{\ast }\right) \otimes
LX\right) =R\left( LR\left( \left( LX\right) ^{\ast }\right) \otimes
LX\right) \overset{R\left( \epsilon _{\left( LX\right) ^{\ast }}^{R}\otimes
LX\right) }{\longrightarrow }R\left( \left( LX\right) ^{\ast }\otimes
LX\right) \overset{R\left( \mathrm{ev}_{LX}\right) }{\longrightarrow }RI_{%
\mathcal{M}}\overset{R\left( \psi _{0}^{L}\right) ^{-1}}{\longrightarrow }%
RLI_{\mathcal{N}}\overset{\cong }{\longrightarrow }I_{\mathcal{N}}.
\end{equation*}%
Since $L$ and $R$ are strict monoidal this reduces to
\begin{equation*}
X^{\ast }\otimes X\overset{\eta _{X^{\ast }\otimes X}^{R}}{\longrightarrow }%
RL\left( X^{\ast }\otimes X\right) =R\left( L\left( X^{\ast }\right) \otimes
LX\right) =R\left( LR\left( \left( LX\right) ^{\ast }\right) \otimes
LX\right) \overset{R\left( \epsilon _{\left( LX\right) ^{\ast }}^{R}\otimes
LX\right) }{\longrightarrow }R\left( \left( LX\right) ^{\ast }\otimes
LX\right) \overset{R\left( \mathrm{ev}_{LX}\right) }{\longrightarrow }RI_{%
\mathcal{M}}=I_{\mathcal{N}}
\end{equation*}%
so that $\mathrm{ev}_{X}=R\left( \mathrm{ev}_{LX}\right) \circ R\left(
\epsilon _{\left( LX\right) ^{\ast }}^{R}\otimes LX\right) \circ \eta
_{X^{\ast }\otimes X}^{R}$ for very $X$ in $\mathcal{N}$. Via the adjunction
this equality becomes $\epsilon _{I_{\mathcal{M}}}^{R}\circ L\left( \mathrm{%
ev}_{X}\right) =\mathrm{ev}_{LX}\circ \left( \epsilon _{\left( LX\right)
^{\ast }}^{R}\otimes LX\right) .$

Note that, $\left( L,R,\eta ,\epsilon \right) $ is a monoidal natural
transformation, the two frunctors $L$ and $R$ are lax monoidal and $\epsilon
:LR\rightarrow \mathrm{Id}$ and $\eta :\mathrm{Id}\rightarrow RL$ are
monoidal natural transformations. Since $\epsilon :LR\rightarrow \mathrm{Id}$
and $\eta :\mathrm{Id}\rightarrow RL$ are a monoidal natural transformation
between strict monoidal functors, we have $\epsilon _{A\otimes
B}^{R}=\epsilon _{A}^{R}\otimes \epsilon _{B}^{R}$, $\epsilon _{I_{\mathcal{M%
}}}^{R}=I_{\mathcal{M}}$, $\eta _{A\otimes B}^{R}=\eta _{A}^{R}\otimes \eta
_{B}^{R}$ and $\eta _{I_{\mathcal{N}}}^{R}=I_{\mathcal{N}}.$ Since $L$ is
braided monoidal, we also have $L\left( c_{A,B}\right) =c_{LA,LB}.$

Thus we compute%
\begin{eqnarray*}
&&\left( \mathrm{ev}_{X}\otimes \mathrm{ev}_{Y}\right) \circ \left( X^{\ast
}\otimes \left( c_{X,Y^{\ast }}\right) ^{-1}\otimes Y\right)  \\
&=&\left( R\left( \mathrm{ev}_{LX}\right) \otimes R\left( \mathrm{ev}%
_{LY}\right) \right) \circ \left( R\left( \epsilon _{\left( LX\right) ^{\ast
}}^{R}\otimes LX\right) \otimes R\left( \epsilon _{\left( LY\right) ^{\ast
}}^{R}\otimes LY\right) \right) \circ \left( \eta _{X^{\ast }\otimes
X}^{R}\otimes \eta _{Y^{\ast }\otimes Y}^{R}\right) \circ \left( X^{\ast
}\otimes \left( c_{X,Y^{\ast }}\right) ^{-1}\otimes Y\right)  \\
&=&R\left[ \left( \mathrm{ev}_{LX}\otimes \mathrm{ev}_{LY}\right) \circ
\left( \epsilon _{\left( LX\right) ^{\ast }}^{R}\otimes LX\otimes \epsilon
_{\left( LY\right) ^{\ast }}^{R}\otimes LY\right) \right] \circ \left( \eta
_{X^{\ast }\otimes X}^{R}\otimes \eta _{Y^{\ast }\otimes Y}^{R}\right) \circ
\left( X^{\ast }\otimes \left( c_{X,Y^{\ast }}\right) ^{-1}\otimes Y\right)
\\
\text{if }\eta _{A\otimes B}^{R} &=&\eta _{A}^{R}\otimes \eta _{B}^{R} \\
&=&R\left[ \left( \mathrm{ev}_{LX}\otimes \mathrm{ev}_{LY}\right) \circ
\left( \epsilon _{\left( LX\right) ^{\ast }}^{R}\otimes LX\otimes \epsilon
_{\left( LY\right) ^{\ast }}^{R}\otimes LY\right) \right] \circ \eta
_{X^{\ast }\otimes X\otimes Y^{\ast }\otimes Y}^{R}\circ \left( X^{\ast
}\otimes \left( c_{X,Y^{\ast }}\right) ^{-1}\otimes Y\right)  \\
&=&R\left[ \left( \mathrm{ev}_{LX}\otimes \mathrm{ev}_{LY}\right) \circ
\left( \epsilon _{\left( LX\right) ^{\ast }}^{R}\otimes LX\otimes \epsilon
_{\left( LY\right) ^{\ast }}^{R}\otimes LY\right) \right] \circ RL\left(
X^{\ast }\otimes \left( c_{X,Y^{\ast }}\right) ^{-1}\otimes Y\right) \circ
\eta _{X^{\ast }\otimes Y^{\ast }\otimes X\otimes Y}^{R} \\
&=&R\left[ \left( \mathrm{ev}_{LX}\otimes \mathrm{ev}_{LY}\right) \circ
\left( \epsilon _{\left( LX\right) ^{\ast }}^{R}\otimes LX\otimes \epsilon
_{\left( LY\right) ^{\ast }}^{R}\otimes LY\right) \circ \left( L\left(
X^{\ast }\right) \otimes L\left( c_{X,Y^{\ast }}\right) ^{-1}\otimes
LY\right) \right] \circ \eta _{X^{\ast }\otimes Y^{\ast }\otimes X\otimes
Y}^{R} \\
\text{if }L\left( c_{A,B}\right)  &=&c_{LA,LB} \\
&=&R\left[ \left( \mathrm{ev}_{LX}\otimes \mathrm{ev}_{LY}\right) \circ
\left( \epsilon _{\left( LX\right) ^{\ast }}^{R}\otimes LX\otimes \epsilon
_{\left( LY\right) ^{\ast }}^{R}\otimes LY\right) \circ \left( L\left(
X^{\ast }\right) \otimes \left( c_{LX,L\left( Y^{\ast }\right) }\right)
^{-1}\otimes LY\right) \right] \circ \eta _{X^{\ast }\otimes Y^{\ast
}\otimes X\otimes Y}^{R} \\
&=&R\left[ \left( \mathrm{ev}_{LX}\otimes \mathrm{ev}_{LY}\right) \circ
\left( \left( LX\right) ^{\ast }\otimes \left( c_{LX,\left( LY\right) ^{\ast
}}\right) ^{-1}\otimes LY\right) \circ \left( \epsilon _{\left( LX\right)
^{\ast }}^{R}\otimes \epsilon _{\left( LY\right) ^{\ast }}^{R}\otimes
LX\otimes LY\right) \right] \circ \eta _{X^{\ast }\otimes Y^{\ast }\otimes
X\otimes Y}^{R} \\
\text{if }\epsilon _{A\otimes B}^{R} &=&\epsilon _{A}^{R}\otimes \epsilon
_{B}^{R} \\
&=&R\left[ \left( \mathrm{ev}_{LX}\otimes \mathrm{ev}_{LY}\right) \circ
\left( \left( LX\right) ^{\ast }\otimes \left( c_{LX,\left( LY\right) ^{\ast
}}\right) ^{-1}\otimes LY\right) \circ \left( \epsilon _{\left( LX\right)
^{\ast }\otimes \left( LY\right) ^{\ast }}^{R}\otimes LX\otimes LY\right) %
\right] \circ \eta _{X^{\ast }\otimes Y^{\ast }\otimes X\otimes Y}^{R}
\end{eqnarray*}%
which equals%
\begin{eqnarray*}
&&\mathrm{ev}_{X\otimes Y}\circ \left( R\varphi _{2}^{R_{1}}\left(
LX,LY\right) \otimes X\otimes Y\right)  \\
&=&R\left( \mathrm{ev}_{L\left( X\otimes Y\right) }\right) \circ R\left(
\epsilon _{\left( L\left( X\otimes Y\right) \right) ^{\ast }}^{R}\otimes
L\left( X\otimes Y\right) \right) \circ \eta _{\left( X\otimes Y\right)
^{\ast }\otimes X\otimes Y}^{R}\circ \left( R\varphi _{2}^{R_{1}}\left(
LX,LY\right) \otimes X\otimes Y\right)  \\
&=&R\left( \mathrm{ev}_{LX\otimes LY}\right) \circ R\left( \epsilon _{\left(
LX\otimes LY\right) ^{\ast }}^{R}\otimes LX\otimes LY\right) \circ RL\left(
R\varphi _{2}^{R_{1}}\left( LX,LY\right) \otimes X\otimes Y\right) \circ
\eta _{X^{\ast }\otimes Y^{\ast }\otimes X\otimes Y}^{R} \\
&=&R\left[ \left( \mathrm{ev}_{LX\otimes LY}\right) \circ \left( \epsilon
_{\left( LX\otimes LY\right) ^{\ast }}^{R}\otimes LX\otimes LY\right) \circ
\left( LR\varphi _{2}^{R_{1}}\left( LX,LY\right) \otimes LX\otimes LY\right) %
\right] \circ \eta _{X^{\ast }\otimes Y^{\ast }\otimes X\otimes Y}^{R} \\
&=&R\left[ \mathrm{ev}_{LX\otimes LY}\circ \left( \varphi _{2}^{R_{1}}\left(
LX,LY\right) \otimes LX\otimes LY\right) \circ \left( \epsilon _{\left(
LX\right) ^{\ast }\otimes \left( LY\right) ^{\ast }}^{R}\otimes LX\otimes
LY\right) \right] \circ \eta _{X^{\ast }\otimes Y^{\ast }\otimes X\otimes
Y}^{R} \\
&=&R\left[ \left( \mathrm{ev}_{LX}\otimes \mathrm{ev}_{LY}\right) \circ
\left( \left( LX\right) ^{\ast }\otimes \left( c_{LX,\left( LY\right) ^{\ast
}}\right) ^{-1}\otimes LY\right) \circ \left( \epsilon _{\left( LX\right)
^{\ast }\otimes \left( LY\right) ^{\ast }}^{R}\otimes LX\otimes LY\right) %
\right] \circ \eta _{X^{\ast }\otimes Y^{\ast }\otimes X\otimes Y}^{R}.
\end{eqnarray*}%
It remains to prove that $\phi _{0}^{RR_{1}L^{\op }}=\phi _{0}^{R_{2}}
$ i.e. that $R\phi _{0}^{R_{1}}=\left( m_{I_{\mathcal{N}}}\right) ^{\dag }$
i.e. that%
\begin{equation*}
\mathrm{ev}_{I_{\mathcal{N}}}\circ \left( R\phi _{0}^{R_{1}}\otimes I_{%
\mathcal{N}}\right) =m_{I_{\mathcal{N}}}.
\end{equation*}%
The fact that $\epsilon :LR\rightarrow \mathrm{Id}$ is a monoidal natural
transformation between strict monoidal functors, we have $\epsilon
_{A\otimes B}^{R}=\epsilon _{A}^{R}\otimes \epsilon _{B}^{R}$ and $\epsilon
_{I_{\mathcal{M}}}^{R}=I_{\mathcal{M}}$ so that
\begin{eqnarray*}
\mathrm{ev}_{I_{\mathcal{N}}}\circ \left( R\phi _{0}^{R_{1}}\otimes I_{%
\mathcal{N}}\right)  &=&R\left( \mathrm{ev}_{LI_{\mathcal{N}}}\right) \circ
R\left( \epsilon _{\left( LI_{\mathcal{N}}\right) ^{\ast }}^{R}\otimes LI_{%
\mathcal{N}}\right) \circ \eta _{\left( I_{\mathcal{N}}\right) ^{\ast
}\otimes I_{\mathcal{N}}}^{R}\circ \left( R\phi _{0}^{R_{1}}\otimes I_{%
\mathcal{N}}\right)  \\
&=&R\left( \mathrm{ev}_{LI_{\mathcal{N}}}\right) \circ R\left( \epsilon
_{\left( LI_{\mathcal{N}}\right) ^{\ast }}^{R}\otimes LI_{\mathcal{N}%
}\right) \circ RL\left( R\phi _{0}^{R_{1}}\otimes I_{\mathcal{N}}\right)
\circ \eta _{RI_{\mathcal{M}}\otimes I_{\mathcal{N}}}^{R} \\
&=&R\left[ \mathrm{ev}_{LI_{\mathcal{N}}}\circ \left( \epsilon _{\left( LI_{%
\mathcal{N}}\right) ^{\ast }}^{R}\otimes LI_{\mathcal{N}}\right) \circ
\left( LR\phi _{0}^{R_{1}}\otimes LI_{\mathcal{N}}\right) \right] \circ \eta
_{RI_{\mathcal{M}}\otimes I_{\mathcal{N}}}^{R} \\
&=&R\left[ \mathrm{ev}_{I_{\mathcal{M}}}\circ \left( \epsilon _{I_{\mathcal{M%
}}^{\ast }}^{R}\otimes I_{\mathcal{M}}\right) \circ \left( LR\phi
_{0}^{R_{1}}\otimes I_{\mathcal{M}}\right) \right] \circ \eta _{RI_{\mathcal{%
M}}\otimes I_{\mathcal{N}}}^{R} \\
&=&R\left[ \mathrm{ev}_{I_{\mathcal{M}}}\circ \left( \phi
_{0}^{R_{1}}\otimes I_{\mathcal{M}}\right) \circ \left( \epsilon _{I_{%
\mathcal{M}}}^{R}\otimes I_{\mathcal{M}}\right) \right] \circ \eta _{RI_{%
\mathcal{M}}\otimes I_{\mathcal{N}}}^{R} \\
&=&R\left[ m_{I_{\mathcal{M}}}\circ \left( \epsilon _{I_{\mathcal{M}%
}}^{R}\otimes I_{\mathcal{M}}\right) \right] \circ \eta _{RI_{\mathcal{M}%
}\otimes I_{\mathcal{N}}}^{R} \\
\text{since }\epsilon _{I_{\mathcal{M}}}^{R} &=&I_{\mathcal{M}} \\
&=&R\left[ m_{I_{\mathcal{M}}}\circ \left( \epsilon _{I_{\mathcal{M}%
}}^{R}\otimes \epsilon _{I_{\mathcal{M}}}^{R}\right) \right] \circ \eta
_{RI_{\mathcal{M}}\otimes I_{\mathcal{N}}}^{R} \\
\text{since }\epsilon _{A\otimes B}^{R} &=&\epsilon _{A}^{R}\otimes \epsilon
_{B}^{R} \\
&=&R\left[ m_{I_{\mathcal{M}}}\circ \epsilon _{I_{\mathcal{M}}\otimes I_{%
\mathcal{M}}}^{R}\right] \circ \eta _{RI_{\mathcal{M}}\otimes RI_{\mathcal{M}%
}}^{R} \\
&=&Rm_{I_{\mathcal{M}}}\circ R\epsilon _{I_{\mathcal{M}}\otimes I_{\mathcal{M%
}}}^{R}\circ \eta _{R\left( I_{\mathcal{M}}\otimes I_{\mathcal{M}}\right)
}^{R} \\
&=&Rm_{I_{\mathcal{M}}}\overset{(\ast )}{=}m_{RI_{\mathcal{M}}}=m_{I_{%
\mathcal{N}}}.
\end{eqnarray*}%
It remains to check $\left( \ast \right) $ but%
\begin{equation*}
Rm_{I_{\mathcal{M}}}=Rr_{I_{\mathcal{M}}}=Rr_{I_{\mathcal{M}}}\circ \phi
_{2}^{R}\left( I_{\mathcal{M}},I_{\mathcal{M}}\right) \circ \left( RI_{%
\mathcal{M}}\otimes \phi _{0}^{R}\right) \overset{R\text{ monoidal}}{=}%
r_{RI_{\mathcal{M}}}=m_{RI_{\mathcal{M}}}.
\end{equation*}
\end{invisible}
\end{proof}

Under mild assumptions, by means of Proposition \ref{pro:monadj}, we are now able to transfer the
main condition of Corollary \ref{coro:Isar} from a braided monoidal category $\Mm$ to the category $\Mm^{\NN}$. This will be applied to provide an explicit example of a pre-rigid braided monoidal category which is not right closed and where liftability is available.

\begin{proposition}
\label{coro:externlift}Let $\Mm$ be a braided monoidal category where
$\Mm$ is abelian and the tensor product is additive and exact in
each argument. Assume that $\Mm$ is pre-rigid and that the
assumption in Corollary \ref{coro:Isar} holds for $\Mm$. Then the
analogous conclusion holds for the category $\Mm^{\NN}$ of
externally $\NN$-graded $\Mm$-objects, too.
\end{proposition}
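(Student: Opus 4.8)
The plan is to realize $\Mm^{\NN}$ as the category $\Nn$ occurring in Proposition \ref{pro:monadj}, so that the liftability hypothesis of Corollary \ref{coro:Isar} is transported across a monoidal adjunction between $\Mm$ and $\Mm^{\NN}$. First I would check that we are genuinely in the setting of Claim \ref{claim:MG}: since $\Mm$ is abelian it has a zero object and all finite coproducts (which coincide with the finite products), and since $\otimes$ is additive it preserves them; as each $S_n=\{(a,b)\in\NN\times\NN\mid a+b=n\}$ is finite, the monoidal category $\Mm^{\NN}$ is well defined. Because $\NN$ is commutative and $\Mm$ is braided, $\Mm^{\NN}$ inherits a braiding (componentwise from the braiding $c$ of $\Mm$), and by Proposition \ref{pro:funcat} it is pre-rigid. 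Thus both $\Mm$ and $\Nn:=\Mm^{\NN}$ are braided monoidal and $\Mm$ is pre-rigid, as required by Proposition \ref{pro:monadj}.

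Next I would produce the monoidal adjunction. I take $L\dashv R$ to be the adjunction $(L,R,\eta,\epsilon)$ already constructed in the proof of Proposition \ref{pro:funcatG}, namely $L\colon\Mm^{\NN}\to\Mm$, $X=(X_n)_{n\in\NN}\mapsto X_0$, and $R\colon\Mm\to\Mm^{\NN}$, $V\mapsto(\delta_{n,0}V)_{n\in\NN}$. The crucial, and genuinely $\NN$-specific, observation is that $L$ is \emph{strict} monoidal: since the only solution of $a+b=0$ in $\NN$ is $a=b=0$, one has $S_0=\{(0,0)\}$ and hence
\begin{equation*}
L(X\otimes Y)=(X\otimes Y)_0=\bigoplus_{a+b=0}X_a\otimes Y_b=X_0\otimes Y_0=LX\otimes LY,
\end{equation*}
and likewise $L\unit^{\NN}=\unit$. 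Similarly $R$ is strict monoidal, because $R\unit=\unit^{\NN}$ and $(RV\otimes RW)_n=\bigoplus_{a+b=n}\delta_{a,0}V\otimes\delta_{b,0}W=\delta_{n,0}(V\otimes W)$, so that $RV\otimes RW=R(V\otimes W)$; here the vanishing of the off-diagonal summands uses that $\initial$ is a zero object absorbed by $\otimes$. The braiding of $\Mm^{\NN}$ reduces in degree $0$ to $c_{X_0,Y_0}$, so $L$ is braided monoidal, and since $\epsilon=\id$ and both functors are strict monoidal, one checks routinely that $\eta$ and $\epsilon$ are monoidal natural transformations, i.e. that $(L,R)$ is a monoidal adjunction.

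With these verifications in hand, Proposition \ref{pro:monadj} applies with $\Nn=\Mm^{\NN}$: it re-derives the pre-rigidity of $\Mm^{\NN}$ (with pre-dual $N^*=R((LN)^*)$) and, more to the point, transfers the hypothesis of Corollary \ref{coro:Isar} from $\Mm$ to $\Mm^{\NN}$. Concretely, the assumption that $\overline{(-)^*}\colon\Alg(\Mm^{\op})\to\Alg(\Mm)$ has a left adjoint yields that $\overline{(-)^*}\colon\Alg((\Mm^{\NN})^{\op})\to\Alg(\Mm^{\NN})$ has one as well, whence by Corollary \ref{coro:Isar} the pair $\big((-)^*\colon\Mm^{\NN}\to(\Mm^{\NN})^{\op},\,(-)^*\colon(\Mm^{\NN})^{\op}\to\Mm^{\NN}\big)$ is liftable.

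The step I expect to carry all the weight is the strict monoidality of $L$, which is exactly the point at which the additive structure of $\NN$ is indispensable: for a general monoid $G$ the degree-$e$ component of a tensor product mixes many summands $X_a\otimes Y_b$ with $ab=e$, and the remark following Proposition \ref{pro:funcatG} records that $L$ then fails to be even strong monoidal. The abelianness and additivity (and a fortiori exactness) of $\otimes$ are used only to place us in the setting of Claim \ref{claim:MG} and Proposition \ref{pro:funcat}; once $L$ and $R$ are seen to be strict monoidal and $L$ braided, the remainder of the argument is a direct appeal to Proposition \ref{pro:monadj}.
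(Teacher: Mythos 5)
Your proposal is correct and follows essentially the same route as the paper: it invokes the adjunction $L\dashv R$ from the proof of Proposition \ref{pro:funcatG}, verifies that both functors are strict monoidal (with the key point that $S_0=\{(0,0)\}$ forces $L(X\otimes Y)=X_0\otimes Y_0$), that $L$ is braided and that $\eta,\epsilon$ are monoidal, and then concludes via Proposition \ref{pro:monadj}. Your closing remark on why this fails for a general monoid $G$ matches an observation the authors themselves record after Proposition \ref{pro:funcatG}.
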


\begin{proof}  By Proposition \ref{pro:funcat}, we know that $\Mm^{\NN}$ is a
pre-rigid monoidal category with pre-dual of $X=\left( X_{n}\right) _{n\in \NN}$ given
by $X^{\ast }=\left( \delta _{n,0}X_{0}^{\ast }\right) _{n\in \NN}$. We also know that the hypotheses that $\Mm$ is abelian and the tensor product is additive and exact in
each argument guarantee that the category $\Mm^{\NN}$ is moreover braided, with braiding $c_{X,Y}$ defined by  $(c_{X,Y})_n=\oplus_{i=0}^nc_{X_i,Y_{n-i}}$ for all $X,Y$ objects in $\Mm^{\NN}$, see e.g. \cite[Definition 2.1]{Schauenburg}. We want to apply Proposition \ref{pro:monadj} in case $\Nn:=\Mm^{\NN}$. To this aim, note that the functors
\begin{eqnarray*}
L:\Mm^{\NN}\rightarrow \Mm, &&\qquad X=\left(
X_{n}\right) _{n\in \NN}\mapsto X_{0},\qquad f=\left( f_{n}\right)
_{n\in \NN}\mapsto f_{0}, \\
R:\Mm\rightarrow \Mm^{\NN}, &&\qquad V\mapsto \left(
\delta _{n,0}V\right) _{n\in \NN},\qquad f\mapsto \left( \delta
_{n,0}f\right) _{n\in \NN}.
\end{eqnarray*}%
considered in the proof of Proposition \ref{pro:funcatG}, where the counit is $\epsilon :=\id$ and the unit is defined on $X=(X_n)_{n\in\NN}$ by $\eta _{X}:=\left( \delta _{n,0}\mathrm{Id}_{X_{0}}\right) _{n\in
\NN}:X\rightarrow RLX=\left( \delta _{n,0}X_{0}\right) _{n\in \NN}$, are both strict
monoidal as
\begin{align*}
L\left( X\otimes Y\right)  &=L\left((\oplus _{i=0}^{n}X_{i}\otimes
Y_{n-i})_{n\in\NN}\right)  =\oplus _{i=0}^{0}X_{i}\otimes
Y_{0-i}=X_{0}\otimes Y_{0}=LX\otimes LY, \\
L\left( \unit^{\NN }\right)  &=L\left((\delta_{n,0} \unit)_{n\in\NN}\right) =\delta _{0,0}\unit=\unit, \\
R\left( V\right) \otimes R\left( W\right)  &=\left( \delta _{n,0}V\right)
_{n\in \NN}\otimes \left( \delta _{n,0}W\right) _{n\in \NN%
}=\left( \oplus _{i=0}^{n}\delta _{i,0}V\otimes \delta _{n-i,0}W\right)
_{n\in \NN}=\left( V\otimes \delta _{n-0,0}W\right) _{n\in \NN%
}\\&=\left( \delta _{n,0}V\otimes W\right) _{n\in \NN}=R\left( V\otimes
W\right) , \\
R\left( \unit\right)  &=\left( \delta _{n,0}\unit\right) _{n\in
\NN}=\unit^{\NN }\text{.}
\end{align*}
Moreover, $\epsilon_{V\otimes W}=\id_{V\otimes W} =\epsilon_{V}\otimes \epsilon_{W}$, $\epsilon_\unit=\id_\unit$, $\eta_{X\otimes Y}=\left( \delta _{n,0}\mathrm{Id}_{(X\otimes Y)_{0}}\right) _{n\in
\NN}=\left( \delta _{n,0}\mathrm{Id}_{X_0\otimes Y_0}\right) _{n\in
\NN}=\eta_X\otimes\eta_Y$, $\eta_{\unit^{\NN }}=\left( \delta _{n,0}\mathrm{Id}_{\unit^{\NN }_{0}}\right) _{n\in
\NN}=\left( \delta _{n,0}\mathrm{Id}_{\unit}\right) _{n\in
\NN}=\id_{\unit^{\NN }}$ so that $\eta$ and $\epsilon$ are monoidal natural transformations and hence $(L,R,\eta,\epsilon)$ is a monoidal adjunction. Furthermore, $L(c_{X,Y})=(c_{X,Y})_0=c_{X_0,Y_0}=c_{LX,LY}$, so that $L$ is braided monoidal.
We conclude by Proposition \ref{pro:monadj}.
\\Note that the pre-dual otained via this proposition is $R((LX)^*)=R(X_0^*)=\left( \delta _{n,0}X_{0}^{\ast }\right) _{n\in \NN}$ i.e. the same object declared at the beginning of this proof.
\end{proof}

\begin{example}\label{example-prerigidnotclosed}
Set $\Mm=\Vec ^{\text{f}}$. As we have seen in Example \ref%
{examplerightclosed}, $\Mm$ is a pre-rigid braided monoidal category
which is not right closed. Note that $\Mm$ fulfills the requirements
of Corollary \ref{coro:externlift}. In fact the adjunction $\left(
L_{1},R_{1}\right) =\left( (-)^{\ast }:\Mm\rightarrow \Mm^{%
\op },(-)^{\ast }:\Mm^{\op }\rightarrow \Mm%
\right) $ in this case is a category equivalence as $V^{\ast \ast }\cong V$
for $V\in \Vec ^{\text{f}}$. As a consequence, by definition of their
monoidal structures, both $L_{1}$ and $R_{1}$ are strong monoidal and the
equivalence $\left( L_1,R_1\right) $ induces,  by \cite[Proposition 3.91]{Aguiar-Mahajan}, an adjunction $\left( \overline{%
L_{1}},\overline{R_{1}}\right) $, which is a category equivalence as well. In particular, $\overline{R_{1}}$ has a
left adjoint i.e. $\overline{L_{1}}$. By Corollary \ref{coro:externlift},
if we consider the adjunction $\left( L_{2},R_{2}\right) =\left( (-)^{\ast }:%
\Mm^{\NN}\rightarrow \left( \Mm^{\NN}\right) ^{%
\op },(-)^{\ast }:\left( \Mm^{\NN}\right) ^{\op %
}\rightarrow \Mm^{\NN}\right) ,$ we get that $\overline{R_{2}}
$ has a left adjoint, too. In this way we have obtained that $\Mm^{%
\NN}$ is an example of a pre-rigid braided monoidal category which is
not right closed and such that Corollary \ref{coro:Isar} applies, so that $%
\left( L_{2},R_{2}\right) $ is a liftable pair of adjoint functors.
\end{example}

The above Example \ref{example-prerigidnotclosed} shows how, in favorable cases,  the notion of pre-rigid category allows to construct liftable pair of adjoint functors when the right-closedness is not available. This was achieved by first proving that the pre-dual construction defines a self adjoint functor $R=(-)^*:\Cc^\op\to\Cc$ whose lax monoidal structure is the opposite of the canonical colax monoidal structure induced on its left adjoint $L=R^\op$ (Proposition \ref{prop:prerigid}), then by showing that such a functor gives rise to a liftable pair $(L,R)$ whenever the induced functor $\overline{R}=\Alg(R)$ has a left adjoint (Proposition \ref{lem:Barop}), and finally by transporting the desired liftability from a possibly closed braided monoidal category $\Nn$ to a desirably not closed braided monoidal category $\Nn$ when these are connected by a suitable monoidal adjunction (Proposition \ref{pro:monadj}).


\begin{thebibliography}{99}


\bibitem[AM1]{Aguiar-Mahajan} M. Aguiar, S. Mahajan, \textit{Monoidal functors,
species and Hopf algebras}. CRM Monograph Series, \textbf{29}. American
Mathematical Society, Providence, RI, 2010.

\bibitem[AM2]{Aguiar-Mahajan-Bimonoids} M. Aguiar, S. Mahajan, \textit{Bimonoids for Hyperplane Arrangements}.  Encyclopedia of Mathematics and its Applications, \textbf{173}. Cambridge University Press, 2020.

\bibitem[AR]{AR} J. Ad\'{a}mek,  J. Rosick\'{y}, \textit{How nice are free completions of categories?} Topology Appl. \textbf{273} (2020), 106972, 24 pp.

\bibitem[Ba]{Ba-star-auto} M. Barr, \textit{$*$-autonomous categories}. With an appendix by Po Hsiang Chu. Lecture Notes in Mathematics, \textbf{752}. Springer, Berlin, 1979.



\bibitem[B\'{e}1]{Benabou-Fibered} J. B\'{e}nabou, \textit{
Fibered categories and the foundations of naive category theory}.
J. Symbolic Logic \textbf{50} (1985), no. 1, 10-37.

\bibitem[B\'{e}2]{Benabou-IntrodBicat} J. B\'{e}nabou, \textit{Introduction to
bicategories}. 1967 Reports of the Midwest Category Seminar p. 1-77.
Springer, Berlin.

\bibitem[B\"{o}]{Bohm}G. B\"{o}hm, \textit{Hopf algebras and their generalizations from a category theoretical point of view}. Lecture Notes in Mathematics, \textbf{2226}. Springer, Cham, 2018.


\bibitem[Bo]{Borceux1} F. Borceux, \textit{Handbook of categorical algebra.
1. Basic category theory.} Encyclopedia of Mathematics and its Applications,
\textbf{50}. Cambridge University Press, Cambridge, 1994

\bibitem[BJT]{BJT}     H.-J. Baues, M. Jibladze, A. Tonks, \textit{Cohomology of monoids in monoidal categories.} Operads: Proceedings of Renaissance Conferences (Hartford, CT/Luminy, 1995), 137-165,
Contemp. Math., \textbf{202}, Amer. Math. Soc., Providence, RI, 1997.


\bibitem[Ca]{Carboni} A. Carboni, \textit{Some free constructions in realizability and proof theory}.
J. Pure Appl. Algebra 103 (1995), no. \textbf{2}, 117-148.


\bibitem[CD]{CaeDel}
S. Caenepeel, M. De Lombaerde, \textit{A categorical approach to Turaev's Hopf group-coalgebras}, Comm. Algebra {\bf 34} (2006), p. 26312657.


\bibitem[De]{De} P. Deligne, \textit{Cat\'{e}gories tannakiennes}. The Grothendieck Festschrift, Vol. II, 111-195, Progr. Math., \textbf{87}, Birkh\"{a}user Boston, Boston, MA, 1990.

\bibitem[DP]{DP}     A. Dold,  D. Puppe, \textit{Duality, trace, and transfer}. Proceedings of the International Conference on Geometric Topology (Warsaw, 1978), pp. 81-102, PWN, Warsaw, 1980.

\bibitem[EGNO]{EGNO} P. Etingof, S. Gelaki, D. Nikshych, V. Ostrik, \textit{Tensor categories}. Mathematical Surveys and Monographs, \textbf{205}, Amer. Math. Soc., 2015.

\bibitem[EV]{EV} L. El Kaoutit, J. Vercruysse, \textit{Cohomology for bicomodules: separable and Maschke functors.} J. K-Theory \textbf{3} (2009), no. 1, 123-152.


\bibitem[GV1]{GV-OnTheDuality} I. Goyvaerts, J. Vercruysse, \textit{On the
duality of generalized Lie and Hopf algebras}, Adv. Math. \textbf{258} (2014), p. 154-190.

\bibitem[GV2]{GV-OnTheDuality-rev} I. Goyvaerts, J. Vercruysse, \textit{On the
duality of generalized Lie and Hopf algebras}, \url{https://arxiv.org/abs/1305.7447}.



\bibitem[Ha]{Hasegawa} M. Hasegawa, \textit{On traced monoidal closed categories}. Math. Structures Comput. Sci. \textbf{19} (2009), no. 2, 217-244.

\bibitem[Ho]{Hovey} M. Hovey, \textit{Homotopy theory of comodules over a Hopf algebroid}, in Homotopy theory: relations with algebraic geometry, group cohomology, and algebraic K -theory (Evanston, IL, 2002), p. 261-304 Contemp. Math., {\bf 346}, Amer. Math. Soc., Providence, RI, 2004.



\bibitem[Ke]{Kelly} G.M. Kelly, \textit{Basic concepts of enriched category theory}, {\sl Repr. Theory Appl. Categ.\/} {\bf 10} (2005), p. 1-136.


\bibitem[Ka]{Kassel} C. Kassel, \textit{Quantum groups}. Graduate Texts in
Mathematics, \textbf{155}. Springer-Verlag, New York, 1995.


\bibitem[KL]{Kelly-Laplaza} G.M. Kelly, M. L. Laplaza, \textit{Coherence for compact closed categories}.
J. Pure Appl. Algebra \textbf{19} (1980), 193-213.

\bibitem[La]{Lambek} J. Lambek, \textit{Type grammar revisited}. Logical aspects of computational linguistics (Nancy, 1997), 1-27, Lecture Notes in Comput. Sci., \textbf{1582}, Lecture Notes in Artificial Intelligence, Springer, Berlin, 1999.



\bibitem[Li]{Li} H. Lindner, \textit{Adjunctions in monoidal categories}.
Manuscripta Math. \textbf{26} (1978/79), no. 1-2, 123-139.

\bibitem[LMM]{LMM} J. Laird, G. Manzonetto, G. McCusker, \textit{Constructing differential categories and deconstructing categories of games}. Automata, languages and programming. Part II, 186-197, Lecture Notes in Comput. Sci., \textbf{6756}, Springer, Heidelberg, 2011.

\bibitem[McC]{McCrudden} P. McCrudden, \textit{Opmonoidal monads.}
Theory Appl. Categ. \textbf{10} (2002), No. 19, 469-485.

\bibitem[McL]{MacLane} S. Mac Lane, \textit{Categories for the working mathematician.} Second edition. Graduate Texts in Mathematics, \textbf{5}.
Springer-Verlag, New York, 1998.

\bibitem[McLM]{MacLane-Moerdijk} S. Mac Lane, I. Moerdijk, \textit{Sheaves in geometry and logic}. A first introduction to topos theory. Corrected reprint of the 1992 edition.

\bibitem[Mic]{Michaelis-ThePrimitives} W. Michaelis, \textit{The primitives of
the continuous linear dual of a Hopf algebra as the dual Lie algebra of a
Lie coalgebra}, in Lie algebra and related topics (Madison, WI, 1988), p. 125-176, Contemp. Math., \textbf{110}, Amer. Math. Soc., Providence, RI, 1990.

\bibitem[Mit]{Mitchell-Low} B. Mitchell, \textit{Low-dimensional group cohomology as monoidal structures}.
Amer. J. Math. \textbf{105} (1983), no. 5, 1049-1066.



\bibitem[NvO]{NvO} C. N\u{a}st\u{a}sescu, F. van Oystaeyen, \textit{Graded ring theory}.
North-Holland Mathematical Library, \textbf{28}. North-Holland Publishing Co., Amsterdam-New York, 1982.


\bibitem[Pa]{Pareigis-CatFunct} B. Pareigis, \textit{Categories and functors}.
Translated from the German. Pure and Applied Mathematics, Vol. \textbf{39}.
Academic Press, New York-London 1970.

\bibitem[PaS]{Pastro-Street} C. Pastro, R. Street, \textit{Closed categories, star-autonomy, and monoidal comonads}.
J. Algebra \textbf{321} (2009), no. 11, 3494-3520.

\bibitem[Po]{Po} H.-E. Porst, \textit{On Categories of  Monoids, Comonoids, and Bimonoids}, Quaestiones Math. {\bf 31} (2008), p. 127-139.

\bibitem[PoS]{PS} H.-E. Porst, R. Street, \textit{Generalizations of the Sweedler dual},  Appl. Categ. Structures \textbf{24} (2016), no. 5, p. 619-647.



\bibitem[Sc1]{Schauenburg} P. Schauenburg, \textit{A characterization of the Borel-like subalgebras of quantum enveloping algebras}. Comm. Algebra \textbf{24} (1996), no. 9, 2811-2823.

\bibitem[Sc2]{Schauenburg-HopExt}  P. Schauenburg, \textit{Hopf algebra extensions and monoidal categories}. New directions in Hopf algebras, 321-381, Math. Sci. Res. Inst. Publ., \textbf{43}, Cambridge Univ. Press, Cambridge, 2002.


\bibitem[Sh]{Shulman-mathoverflow}  M. Shulman, comment on MathOverflow, \url{https://mathoverflow.net/a/104178}.

\bibitem[St]{Street12} R. Street, \textit{Monoidal categories in, and linking, geometry and algebra}. Bull. Belg. Math. Soc. Simon Stevin \textbf{19} (2012), no. 5, 769-821.

\bibitem[SR]{SaavedraRivano} N. Saavedra Rivano, \textit{Cat\'{e}gories
Tannakiennes}. (French) Lecture Notes in Mathematics, Vol. \textbf{265}.
Springer-Verlag, Berlin-New York, 1972.

\bibitem[ST]{ST} S. Stolz, P. Teichner, \textit{Traces in monoidal categories}. Trans. Amer. Math. Soc. \textbf{364} (2012), no. 8, 4425–4464.

\bibitem[Ta]{Tambara} D. Tambara, \textit{The coendomorphism bialgebra of an
algebra} J. Fac. Sci. Univ. Tokyo Sect. IA Math. \textbf{37} (1990), p. 425-456.

\bibitem[Tu]{Tu} V.G. Turaev, \textit{Homotopy field theory in dimension 3 and crossed group-categories}, \url{https://arxiv.org/abs/math/0005291}.

\bibitem[Ul]{Ulbrich} K.-H. Ulbrich, \textit{On Hopf algebras and rigid monoidal categories}. Hopf algebras. Israel J. Math.  \textbf{72} (1990), no. 1-2, 252-256.

\end{thebibliography}
\end{document}